\newtheorem{theorem}{Theorem}
\newtheorem{remark}{Remark}
\newtheorem{proposition}[theorem]{Proposition}
\newcommand\myeq[1]{\mathrel{\overset{\makebox[0pt]{\mbox{\normalfont\tiny #1}}}{=}}}
\newcommand\myleq[1]{\mathrel{\overset{\makebox[0pt]{\mbox{\normalfont\tiny #1}}}{\leq}}}
\newcommand\mygeq[1]{\mathrel{\overset{\makebox[0pt]{\mbox{\normalfont\tiny #1}}}{\geq}}}
\title{On the discrete equation model\\ for compressible multiphase fluid flows.}
\author{Marco Petrella$^1$ \and R. Abgrall$^2$ \and S. Mishra$^1$}
\date{
    $^1$Seminar of Applied Mathematics, ETH Zurich, Switzerland.\\%
    $^2$Department of Mathematics, University of Zurich, Switzerland.\\ %
\today
}
\begin{document}
\maketitle

\begin{abstract}
The modeling of multi-phase flow is very challenging, given the range of scales as well as the diversity of flow regimes that one encounters in this context. We revisit the discrete equation method (DEM) for two-phase flow in the absence of heat conduction and mass transfer. We analyze the resulting probability coefficients and prove their local convexity, rigorously establishing that our version of DEM can model different flow regimes ranging from the disperse to stratified (or separated) flow. Moreover, we reformulate the underlying mesoscopic model in terms of an one-parameter family of PDEs that interpolates between different flow regimes. We also propose two sets of procedures to enforce relaxation to equilibrium.
We perform several numerical tests to show the flexibility of the proposed formulation, as well as to interpret different model components. The one-parameter family of PDEs provides an unified framework for modeling mean quantities for a multiphase flow, while at the same time identifying two key parameters that model the inherent uncertainty in terms of the underlying microstructure. 
\end{abstract}

\section{Introduction}

The dynamical evolution of two (or more) distinct phases (of matter) is often referred to as \emph{multiphase flow}  and it is a very important topic of study in a broad variety of engineering systems, even though it is by no means limited to modern industrial design and can be observed in many natural/biological phenomena. A very limited list of references for multiphase flow include \cite{Faghri, Ishii, Drew&Passman, Stewart, BaerNunziato, Bdzil, Saurel&Abgrall, Abgrall&Saurel} and references therein.

The simplest, yet very representative, form of multiphase flow is two-phase flow. The mathematical modeling of two-phase flow arguably originated in the so-called multi-fluid models. Herein, one assumes that the dynamics of compressible inviscid fluid mixtures is modelled by the Euler equations \cite{Saurel&Abgrall}, where the characteristic middle field (contact discontinuity) consists of a material interface, if the adjacent data belong to different phases. Different parameters in the equations of state (EOS) are introduced in these models to represent the inherent heterogeneities in terms of the discontinuous variation of the pressure-density relations. Finally, additional conservation laws are inlucded to model species advection \cite{Saurel1999, Larrouturou, Abgrall1988,Abgrall1994,Abgrall2001,Toro2002,Karni, Cocchi}. Despite the inherent simplicity and flexibility of this approach, such models are often marred by spurious velocity and pressure oscillations near material interfaces \cite{Abgrall1988,Karni,Abgrall1994,Abgrall2001}, excessive numerical diffusion \cite{Saurel2018}, when approximated via classical schemes and negative mass fractions \cite{Larrouturou}.

An alternative and more popular approach, based on the theory of multiphase flows \cite{Ishii, Drew&Passman}, assumes each phase to be distinct and described by its own set of equations, typically the Euler equations. Pioneering works in this direction include those of Stewart and Wendroff \cite{Stewart} and Bear and Nunziato \cite{BaerNunziato}, see also \cite{Saurel&Abgrall}. This approach has now been extended into a wide variety of possible models. Following the observation that different phases interacts through the interface up to reaching uniform conditions \cite{Bdzil} (i.e. they move with
approximately the same pressure and velocity), the resulting set of equations is classified according to the set of independent variable they consider \cite{Zein}. Restricting the discussion here to one space dimension, we start with the so called four equation models \cite{Kapila, Yi2019}, which essentially resemble the reactive Euler equations and lead to similar difficulties as those experienced with the multifluid approach described above. 

Next, one considers the so-called five equations models \cite{Bdzil, Kapila, Murrone, Kreeft2010,Saurel2007}, where one  assumes a fully mechanical equilibrium between the phases,implying that the mixture is macroscopically moving with one-pressure and one-velocity. In \cite{Murrone}, it is shown how to derive the  five equation models from the Baer and Nunziato one by a formal asymptotic expansion assuming that the relaxation parameters tends together toward infinity, while their ratio stay bounded. In the case of non smooth solutions, a set of jump relations for the five equation model was provided in  \cite{Saurel2007}.

One can follow \cite{Saurel&Abgrall} and relax the assumption of mechanical equilibrium across phases. The resulting seven equation model requires the introduction of stiff source terms to model the underlying thermodynamics and leads to the removal of spurious oscillations around material discontinuities. Moreover, the source terms force a \emph{relaxation} to a single pressure and velocity recovering an experimentally observed fact in two-phase flows. Moreover, the zero relaxation limit of these models results in the five-equation model of  \emph{Kapila et al.}  \cite{Kapila}. 

Inspite of the tremendous progress made with regards to the modeling of two-phase flows as described above, several pressing issues remain. To start with, these mathematical models involve non-conservative products which make conservation of energy potentially difficult. Moreover, a mathematically sound solution concept, together with rigorous proofs of well-posedness, even in one space dimension, is extremely challenging. Notable exceptions are presented in \cite{Jin2019,Kwon2020,Novotny2020} where the authors provide a rigorous mathematical treatment of a simplified version of the Baer-Nunziato equations.

Furthermore, from a modelling perspective, a stark shortcoming of the many of the afore-mentioned models lies in the fact that the interfacial velocity and pressure are difficult to determine, see \cite{Hantke2021,Mueller2016,Bresch2018,Saurel&Abgrall, Liou2008, Coquel2017, Perrier2021,
BaerNunziato,Coquel2002,Enaux2006,Delhaye1982,Saurel2003,Glimm1996, Saurel2001} and references therein for a discussion of this issue as well as possible remedies. 

Given these shortcomings of the afore-mentioned models, one can see that there is no consensus on what constitutes a suitable modelling framework for two-phase flows. In particular, an uniform description of the vast range of flow regimes, ranging from isolated interfaces to fogs and microbubbles, within the purview of a single predictive model is extremely challenging. The search for such a framework brings us to the so-called \emph{Discrete Equation Method} (DEM) of \cite{Abgrall&Saurel}, see also \cite{Abgrall1994}. Inspired by the Godunov method and well-established theories of ensemble averaging \cite{Drew&Passman}, DEM entails the statistical description of each phase in terms of its own equation of state and allows for, in principle, all possible flow regimes. A multiscale formulation allows one to incorporate information from finer scales. One can think of DEM as a mesoscopic model as its does not require an explicit description of the underlying microstructure. 

Despite its promise as a suitable modeling framework for multiphase flows, DEM still requires user-defined ansatz (closure relations) on the probability coefficients that arise in course of the ensemble averaging procedure. Although many papers such as \cite{Saurel2017} suggest modifications for overcome this issue, for instance in the case of simulating dense-to-dilute transitions by coupling the underlying Euler equations with an evolution equation on the number of dispersed particles, it is fair to say the design of a flexible general purpose DEM type model, which can describe various flow regimes is still outstanding. 

These limitations of the DEM approach constitute the starting point of the current paper. Herein, we will carefully develop and analyze the DEM approach for describing two-phase flows in one space dimension, while neglecting heat and mass transfer. Our main aim would be to characterize the probability coefficients that arise in the DEM framework of \cite{Abgrall&Saurel} such that all possible flow regimes can be described by DEM. This will allow us to encapsulate all phase interactions in terms of a single parameter that interpolates between disperse and stratified flows. Moreover, simple relaxation procedures will also be investigated. This will allow us to study numerically, how different choices of parameters leads to the recovery of different flow regimes, enabling a thorough analysis of the expressivity as well as limitations of DEM for different regimes of multiphase flow.  

The rest of the paper is organized as follows: 
In Section $2$ we summarize the DEM procedure, highlighting the modelling assumptions related to such procedure. Section $3$ is dedicated to the analysis of the probability coefficients resulting from the previous section, and Section $4$ derives the corresponding one-parameter limit along with the numerical strategy to solve it. Finally, Section $5$ include the numerical experiments we have performed on such models, and discussion of the outcomes is carried out in Section $6$.

\section{The Discrete Equation Method}
\label{sec:dem}
In this section, we will present the discrete equation method for modeling two-phase flows in one space dimension. We start with a succinct presentation of the ensemble averaging theory on which DEM is based.

\subsection{The ensemble averaging theory}

In the following we recall the procedure of \cite{Abgrall&Saurel} for a biphasic Eulerian flow without mixing. Phase transition is excluded from the present study and we suppose that heat transfer is too slow compared to mechanical relaxation \cite{Kapila}.\\
We consider two phases $\Sigma_1$ and $\Sigma_2$, each governed by the Euler equations
\begin{equation}
\label{Euler}
\partial_t \textbf{U}^{(k)} + \partial_x \textbf{F}^{(k)}(\textbf{U}) = \textbf{0}
\end{equation}
where $\textbf{U}^{(k)} = [\rho^{(k)}, \rho^{(k)} u^{(k)}, \rho^{(k)} E^{(k)}]^T$ 
and 
$\textbf{F}^{(k)} = [\rho^{(k)}{u^{(k)}}, \rho^{(k)} {u^{(k)}}^2 + p^{(k)}, \left(\rho^{(k)} E^{(k)} + p^{(k)}\right)u^{(k)}]^T$. 
The notation is classical: $\rho^{(k)}, u^{(k)}, p^{(k)}$ denote the density, velocity and pressure of the phase $k\in \lbrace 1, 2 \rbrace$. The total energy $E^{(k)} = \frac{1}{2} {u^{(k)}}^2 + e^{(k)}$, where $e^{(k)}$ denotes the internal energy. Different choices of equation of state (EOS) have severe implications on the flow regime and a typical issues in multiphase flow is the determination of a methodology that handles different EOS.

As it is well-known \cite{Drew&Passman}, a prime characteristic of %multicomponent/
multiphase mixtures is that there is uncertainty in the exact location of the particular constituents at any particular time.
In turn, from the practical point of view, for a given set of initial and boundary conditions, a single measurement of such experiment carries limited information about the mean and distribution of dispersed particles that generated such results. For this reason, modern multiphase flow theory is described in averaged sense.
In our case, we aim at considering both the spatial rearrangement of disperse particles and the statistical description of repeated sampling for a fixed set of initial and boundary condition.

\subsubsection{Notation}

We hereby introduce some notations.
Let $\left(\Omega, \mathcal{F}, \mathbb{P}\right)$ be a probability space on $\mathbb{R}$. We denote the physical space of interest (i.e. domain) by an open set $D\subseteq\mathbb{R}^d$, where $d\in\mathbb{N}$ is the spatial dimension. The time horizon is denoted by $T>0$, and the any time considered for our simulations is denoted by $t \in [0,T]$.
We aim at including the randomized dependency of quantities of interest by taking random fields between the spaces $(\Omega, \mathbb{F}, \mathbb{P})$ and the space of $p$-integrable functions $L^p\left(D\times \mathbb{R}_{+}; U\right)$, with $U\subset\mathbb{R}^N$. Here $N\in\mathbb{N}$ is the number of quantities of interest of the system under consideration.

Existence and uniqueness (well-posedness) of solutions for systems of hyperbolic conservation laws is restricted to one-dimensional ($d = 1$) and for sufficiently small initial data \cite{MR1816648}. More sophisticated solution paradigma \cite{Fjordholm2017} are also available but they are out of the scope of this work. We therefore restrict our description to the cases $d=1$.

In such a case, \emph{weak}-solution are typically found in the subspace $BV \left( D\times \mathbb{R}_+; U\right)$. We will consider random variables between the spaces $(\Omega,\mathcal{F})$ and $(\mathcal{X}, \mathcal{B}(\mathcal{X}))$, where the topological space $\mathcal{X} = L^{1}$ is endowed with the Borel-sigma algebra $\mathcal{B}(\mathcal{X})$, as to make each continuous function measurable.
Let $\omega\in \Omega$ be a fixed realization. At each time level $t\in [0,T]$ we will assume that there exist a pair of open sets $D_1(t;\omega),D_2(t;\omega)$ affected by only one phase, namely $D_k(t;\omega) := \lbrace x\in D\, \vert\, \textit{phase k is present at }(x,t)\rbrace$ such that

\begin{enumerate}
\item (\underline{Non-mixing condition}) Only one phase is present at each space-time location: 
\[
D_1(t;\omega)\cap D_2(t;\omega) = \emptyset
\]
\item (\underline{Saturation condition}) No vacuum is generated at any space-time location:
\[
D = \overline{D_1(t;\omega)}\cup\overline{D_2(t;\omega)} \setminus \partial D.
\]
where $\partial D$ denotes the frontier of $D$.
\end{enumerate}

\noindent
The interface between the two-phases is then defined according to the following relation:
\begin{equation*}
I(t;\omega) = \overline{D_1(t;\omega)} \cap \overline{D_2(t;\omega)}\setminus \partial D
\end{equation*}
We introduce the characteristic function $X^{(k)}\,:\,\Omega\rightarrow\mathcal{X}$ associated to phase $k$ as the indicator function over the points of the domain $D$ affected by phase $k$, namely 
\begin{equation}
\label{eq:characteristi}
X^{(k)}\,:\, \omega\in\Omega \longmapsto X^{(k)}(x,t;\omega) =
\begin{cases}
1 & \textit{if}\quad x\in D_k(t; \omega)\\
0 & \textit{otherwise}
\end{cases} 
\qquad
\forall (x,t) \in D\times \mathbb{R}_+
\end{equation}
Using standard theory of distribution, the characteristic function can be shown to satisfy the following topological equation (suppressing $\omega$-dependence for notational convenience)  \cite{Drew&Passman}
\begin{equation}
\label{CharEq}
\partial_t X^{(k)} + \sigma \partial_x X^{(k)} = 0
\end{equation}
where $\sigma$ is the interface velocity of the realization highlighted by $X^{(k)}(\cdot;\omega)$. Hence, one can also show that upon multiplication of (\ref{Euler}) by the characteristic function it holds
\begin{equation}\label{Euler_chi}
\partial_t X^{(k)} \textbf{U}^{(k)} + \partial_x X^{(k)}\textbf{F}^{(k)} = {\textbf{F}^{(k)}}^{lag}\partial_x X^{(k)}
\end{equation}
where the Lagrangian flux ${\textbf{F}^{(k)}}^{lag} := \textbf{F}^{(k)}_I - \sigma \textbf{U}^{(k)}_I$ and the subindex $I$ denotes the interfacial value from the $k$-th side.
We introduce the ensemble average operator $\mathcal{E}$ \cite{Drew&Passman} that is assumed to commute with time and space derivative operators (these are commonly referred as Gauss and Leibniz Rules, which hold for well-behaved input functions). Taking ensemble average on (\ref{Euler_chi}) and (\ref{CharEq}), one obtains the following equation
\begin{equation}
\label{Euler_ensamble}
\begin{cases}
\partial_t \mathcal{E}\left[X^{(k)} \textbf{U}^{(k)}\right] + \partial_x \mathcal{E}\left[X^{(k)}\textbf{F}^{(k)}\right] = \mathcal{E}\left[\left(\textbf{F}^{(k)}_I - \sigma \textbf{U}^{(k)}_I\right)\partial_x X^{(k)}\right]\\
\partial_t \mathcal{E}\left[X^{(k)}\right] + \mathcal{E}\left[\sigma\partial_x X^{(k)}\right] = 0
\end{cases}
\end{equation}
We thus introduce the notation that will be used throughout this paper: let
\begin{equation}\label{EA_vars}
\textbf{U}_k := \mathcal{E}\left[X^{(k)}\textbf{U}^{(k)}\right] = [\alpha_k \rho_k, \alpha_k \rho_k u_k, \alpha_k \rho_k E_k]^T
\end{equation}
where the ensemble average quantities are defined via
\begin{equation}
\alpha_k :=\mathcal{E}\left[X^{(k)}\right],\, 
\rho_k :=\frac{\mathcal{E}\left[X^{(k)}\rho^{(k)}\right]}{\alpha_k},\,
u_k :=\frac{\mathcal{E}\left[X^{(k)}\rho^{(k)}u^{(k)}\right]}{\alpha_k\rho_k},\,
p_k :=\frac{\mathcal{E}\left[X^{(k)}p^{(k)}\right]}{\alpha_k},\,
e_k :=\frac{\mathcal{E}\left[X^{(k)}\rho^{(k)}e^{(k)}\right]}{\alpha_k\rho_k}
\end{equation}
so that $E_k := \frac{1}{2}u_k^2 + e_k$. Using this notation the ensemble-average flux can be written as
\begin{equation}\label{EA_flux}
\begin{split}
 \mathcal{E}\left[ X^{(k)}\textbf{F}^{(k)} \right] & =\underbrace{
\begin{bmatrix}
\alpha_k\rho_k u_k\\
\alpha_k\rho_ku_k^2 + \alpha_kp_k\\
\alpha_k u_k(\rho_k E_k + p_k)
\end{bmatrix}
} 
+
\underbrace{
\begin{bmatrix}
0\\
\mathcal{E}[X^{(k)}\rho^{(k)}{u^{(k)}}^2]-\alpha_k\rho_ku_k^2\\
\mathcal{E}\left[X^{(k)}u^{(k)}\left(\rho^{(k)}E^{(k)} + p^{(k)}\right)\right]-\alpha_ku_k(\rho_k E_k + p_k)
\end{bmatrix}
}\\
&\qquad\qquad\quad =: \alpha_k\textbf{F}_k \qquad\qquad\qquad\qquad\qquad\qquad  =: \textbf{F}_{k}^0
\end{split}
\end{equation}
where $\textbf{F}_k^0$ denotes the kinetic fluctuation of momentum and energy, that will be neglected in the following.

\subsection{The DEM for Eulerian biphasic flow}

Using the notation introduced in the previous section, the DEM method applies to the discrete setting: we consider a computational mesh $(x_i)_{i=1,\ldots, M}\subset \mathbb{R}$ and the associated control volume $\mathcal{C}_i = \left[x_{i-\frac{1}{2}}, x_{i+\frac{1}{2}}\right]$.\\
According to the definition of Lagrangian Fluxes, one needs to identify/be able to compute the speed of the interface separating different components. This translates at the numerical level to the necessity of considering Riemann Solvers able to compute a contact-discontinuity $\sigma$. Given two initial states $\textbf{U}_L,\textbf{U}_R$ we assume the solution of a Riemann Problem with possibly different phases at each side of the discontinuity to generate three waves (shocks or rarefactions separated by a contact discontinuity), in complete analogy to the single-phase theory.\\
Given $\textbf{U}_L, \textbf{U}_R \in \mathbb{R}^m$, the speed of the contact-discontinuity/material interface is denoted by $\sigma_{LR} := \sigma(\textbf{U}_L,\textbf{U}_R)$, while $F(\textbf{U}_L,\textbf{U}_R)$ and $U(\textbf{U}_L,\textbf{U}_R)$ denote the numerical flux and the numerical solution generated by solving the Riemann Problem with initial states $\textbf{U}_L,\textbf{U}_R$. The concrete forms of the numerical operators $F, U$ depend on the Riemann Solver under consideration, for which popular choices are the HLLC or the Roe Riemann Solvers \cite[Chapter 10-11]{ToroRS}.\\
At each time step the preliminary stages of the method proceed as follows: at each time level $t = t^n$, we have 
\begin{enumerate}
\item Subdivide randomly the computational cell $x_{i-\frac{1}{2}}=\xi_0 < \xi_1 < \ldots < \xi_{N(\omega)} = x_{i+\frac{1}{2}}$, where $\omega$ aims at indexing the specific realization of $X^{(k)}$.
\item Assign randomly in each subcell $[\xi_j,\xi_{j+1}]$ the phases $\Sigma_1$ or $\Sigma_2$ with the state $\textbf{U}^{(1)}$ or $\textbf{U}^{(2)}$. Up to merging adjacent subcells affected by the same phase, we have that within a volume two adjacent subcells contain different phases. We denote the interface velocity originating at the subnode $\xi_j$ as $\sigma_j$, see Fig. \ref{Fig:schematic}.

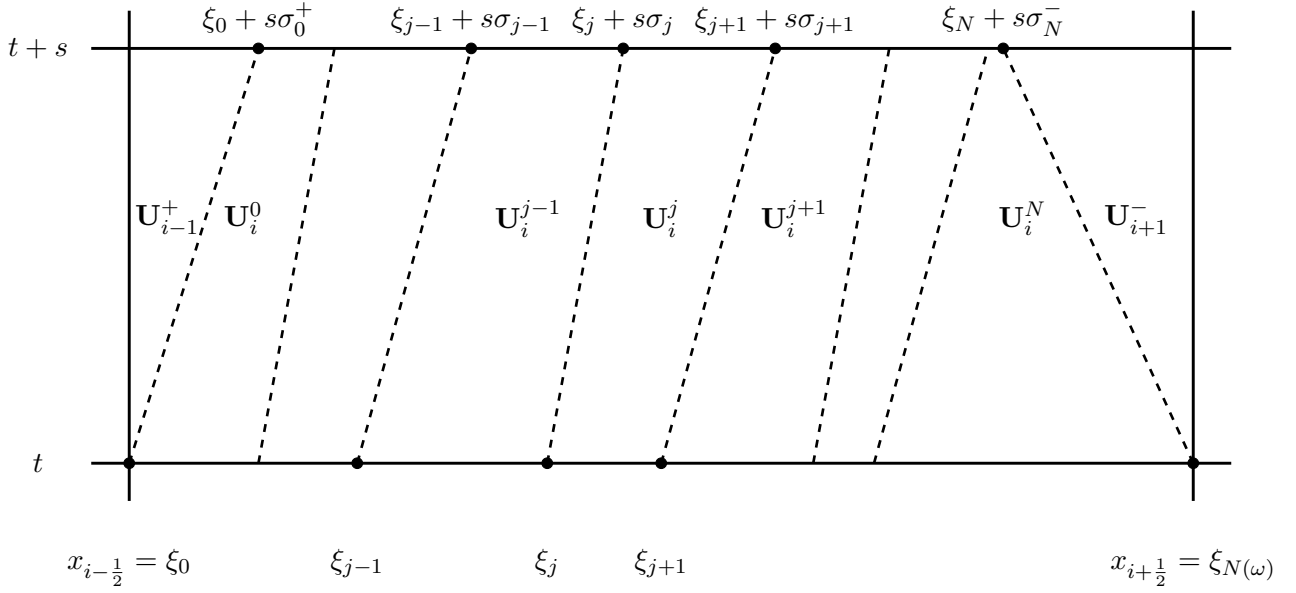
\begin{figure}
	\begin{center}
		\begin{tikzpicture}
		
		\pgfmathsetmacro{\xL}{-0.5};
		\pgfmathsetmacro{\yL}{-0.5};
		\pgfmathsetmacro{\xR}{14.5};
		\pgfmathsetmacro{\yR}{6.0};
		\pgfmathsetmacro{\marginBox}{0.5};
			
		%Rectangle
		\draw[line width = 1.1pt] (\xL, \yL + \marginBox) -- (\xR, \yL+\marginBox);
		\draw[line width = 1.1pt] (\xL +\marginBox,\yL) -- (\xL + \marginBox,\yR);
		\draw[line width = 1.1pt] (\xR -\marginBox,\yL) -- (\xR - \marginBox,\yR);
		\draw[line width = 1.1pt] (\xL,\yR - \marginBox) -- (\xR,\yR - \marginBox);
		
		%Labels
		\node at (\xL -0.7, \yL+\marginBox) {$t$};
		\node at (\xL -0.7, \yR-\marginBox) {$t+s$};
		
		\tkzDefPoint(\xL+\marginBox,\yL+\marginBox){x0};
		\tkzLabelPoint[below, yshift=-1cm](x0){$x_{i-\frac{1}{2}} = \xi_0$}		
		\tkzDefPoint(\xR - \marginBox,\yL+\marginBox){xN};
		\tkzLabelPoint[below, yshift=-1cm](xN){$x_{i+\frac{1}{2}} = \xi_{N(\omega)}$};
			
		\tkzDefPoint(1.7,\yR - \marginBox){y0};
		\tkzLabelPoint[above, yshift = 1pt](y0){$\xi_0 + s \sigma^+_0$};
		\tkzDefPoint(11.5,\yR - \marginBox){yN};
		\tkzLabelPoint[above, yshift = 1pt](yN){$\xi_N + s \sigma^-_N$};		
		
		%lines
		\draw[line width = 1pt, dashed] (x0) -- (y0);
				
		\draw[line width = 1pt, dashed] (1.7,0) -- (2.7,\yR-\marginBox);
		
		\tkzDefPoint(3,\yL + \marginBox){xjm};
		\tkzLabelPoint[below, yshift = -1cm](xjm){$\xi_{j-1}$};
		\tkzDefPoint(4.5,\yR - \marginBox){yjm};
		\tkzLabelPoint[above, yshift = 1pt](yjm){$\xi_{j-1} + s \sigma_{j-1}$};			
		\draw[line width = 1pt, dashed] (xjm) -- (yjm);
		
		\tkzDefPoint(5.5,\yL + \marginBox){xj};
		\tkzLabelPoint[below, yshift = -1cm](xj){$\xi_{j}$};
		\tkzDefPoint(6.5,\yR - \marginBox){yj};
		\tkzLabelPoint[above, yshift = 1pt](yj){$\xi_{j} + s \sigma_{j}$};			
		\draw[line width = 1pt, dashed] (xj) -- (yj);
		
		\tkzDefPoint(7,\yL + \marginBox){xjp};
		\tkzLabelPoint[below, yshift = -1cm](xjp){$\xi_{j+1}$};
		\tkzDefPoint(8.5,\yR - \marginBox){yjp};
		\tkzLabelPoint[above, yshift = 1pt](yjp){$\xi_{j+1} + s \sigma_{j+1}$};			
		\draw[line width = 1pt, dashed] (xjp) -- (yjp);
			
		\draw[line width = 1pt, dashed] (9,0) -- (10, \yR-\marginBox);
		\draw[line width = 1pt, dashed] (9.8,0) -- (11.3, \yR-\marginBox);

		\draw[line width = 1pt, dashed] (xN) -- (yN);		
		
		\foreach \n in {y0,yjm,yj,yjp,yN, x0,xjm,xj,xjp,xN} { 
			\filldraw[black] (\n) circle (2pt); 
		}		
		
		\pgfmathsetmacro{\ym}{0.5*\yL + 0.5*\yR + \marginBox};		
		
		\node at (\xL+2*\marginBox, \ym ) {$\mathbf{U}^{+}_{i-1}$};
		\node at (\xL+4*\marginBox, \ym ) {$\mathbf{U}^{0}_{i}$};
		\node at (\xL+11.5*\marginBox, \ym ) {$\mathbf{U}^{j-1}_{i}$};
		\node at (\xL+15*\marginBox, \ym ) {$\mathbf{U}^{j}_{i}$};
		\node at (\xL+18.5*\marginBox, \ym ) {$\mathbf{U}^{j+1}_{i}$};
		\node at (\xL+24.5*\marginBox, \ym ) {$\mathbf{U}^{N}_{i}$};
		\node at (\xL+27.5*\marginBox, \ym ) {$\mathbf{U}^{-}_{i+1}$};
		\end{tikzpicture}
	\end{center}
	\caption{Schematic representation of the prototypical generation of interfaces in the control volume $\mathcal{C}_i\times [t,t+s]$.}\label{Fig:schematic}
\end{figure}

Notice that the evolution of phase $k\in\lbrace 1, 2 \rbrace$
\begin{equation}
\int_t^{t+s}\int_{\mathcal{C}_i} X^{(k)}\left(\partial_t \textbf{U}^{(k)} +\partial_x \textbf{F}^{(k)}\right)\, dxdr=0
\end{equation}
can be written as
\begin{equation}
\label{scheme_exact}
\begin{split}
&\frac{1}{\Delta x} \int_{x_{i-\frac{1}{2}}}^{x_{i+\frac{1}{2}}} (X^{(k)}\textbf{U}^{(k)})(x,t+s)\,dx - \frac{1}{\Delta x}\int_{x_{i-\frac{1}{2}}}^{x_{i+\frac{1}{2}}} (X^{(k)}\textbf{U}^{(k)})(x,t)\,dx\\
& + \frac{1}{\Delta x}\left(\int_t^{t+s}(X^{(k)}\textbf{F}^{(k)})(x_{i+\frac{1}{2}},r)\,dr- \int_t^{t+s}(X^{(k)}\textbf{F}^{(k)})(x_{i-\frac{1}{2}},r)\,dr\right)\\
& -\int_t^{t+s} \left(F^{lag}_0\partial_x X^{(k)}\right) (x_{i-\frac{1}{2}}+(r-t)\sigma_0^+,r)\, dr-\int_t^{t+s} \left(F^{lag}_{N(\omega)}\partial_x X^{(k)}\right)(x_{i+\frac{1}{2}} + (r-t)\sigma^-_{N(\omega)},r)\,dr\\
& - \frac{1}{\Delta x}\sum_{j=1}^{N(\omega)-1} \int_t^{t+s} \left(F^{lag}_{j}\partial_x X^{(k)}\right)(\xi_{j}+(r-t)\sigma_{j},r)\, dr=0
\end{split}
\end{equation}
where the Lagrangian fluxes $F^{lag}_{j}:=\textbf{F}^{(k)}_{I_j}-\sigma_j \textbf{U}^{(k)}_{I_j}$ are evaluated at the only side affected by phase $k$ of the interface moving with velocity $\sigma_j$.
\item Obtain a semi-discrete approximation of the realization according to a Godunov type scheme: we approximate the flux integrals and the Lagrangian flux integrals by means of a Godunov type scheme 
\begin{equation}\label{approximation}
\begin{split}
&X^{(k)}\textbf{F}^{(k)} (x_{i+\frac{1}{2}},r) \approx X^{(k)}(x_{i+\frac{1}{2}},t^n)F(U^{n}_i,U^{n}_{i+1})\\
&F^{lag}_j\partial_x X^{(k)} (\xi_{j}+(r-t)\sigma_{j},r) \approx \left[X^{(k)}\right]_j\left(F(U^{j}_i,U^{j+1}_i) -\sigma(U^{j}_i,U^{j+1}_i) U(U^{j}_i,U^{j+1}_i)\right)
\end{split}
\end{equation}
for any $r \in [t^n, t^{n}+s]$. The notation $\left[X^{(k)}\right]_j$ stands for the jump across the $j$-th interface moving with velocity $\sigma_j$. Under the above assumptions and upon division by $s$ in (\ref{scheme_exact}), the scheme reads
\begin{equation}
\begin{split}
&\frac{d}{dt}\left(\frac{1}{\Delta x} \int_{x_{i-\frac{1}{2}}}^{x_{i+\frac{1}{2}}} (X^{(k)}\textbf{U}^{(k)})(x,t)\,dx\right)\\
&\qquad + \frac{1}{\Delta x}\left[X^{(k)}(x_{i+\frac{1}{2}},t^n)F(U^{n}_{i-1},U^{n}_i) - X^{(k)}(x_{i-\frac{1}{2}},t^n)F(U^{n}_{i},U^{n}_{i+1})\right]=\\
&\qquad +\frac{1}{\Delta x}\sum_{j=1}^{N(\omega)-1} F^{lag}(U^{j-1}_i,U^j_i)\left[X^{(k)}\right]_j\\
&\qquad +\frac{1}{\Delta x}\Big(F^{lag}(U^+_{i-1},U^0_i)\left[X^{(k)}\right]_0+F^{lag}(U^{N(\omega)-1}_i,U^-_{i+1})\left[X^{(k)}\right]_{N(\omega)}\Big).
\end{split}
\end{equation}
Due to the alternate character of the distribution of data in the interior of the volume $\mathcal{C}_i$, one obtains the following relations: let us define the number of interior interfaces $N_{int} = N -1 \geq 0$, then
\begin{enumerate}
\item \underline{$N_{int}$ is even} : one can rearrange the summation as to arrive to (see Table  \ref{Tab:istances})
$$
\sum_{j=1}^{N(\omega)-1} F^{lag}(U^{j-1}_i,U^j_i)\left[X^{(k)}\right]_j 
= 
\frac{N_{int}}{2}\left(
F^{lag}(U^{(l)}_i,U^{(k)}_i)-F^{lag}(U^{(k)}_i,U^{(l)}_i)
\right)
$$
\item \underline{$N_{int}$ is odd} : then, $N_{int}-1$ is even (if $N_{int} > 0$), thus
\begin{equation*}
\begin{split}
\sum_{j=1}^{N(\omega)-1} 
&
F^{lag}(U^{j-1}_i,U^j_i)\left[X^{(k)}\right]_j 
= 
\frac{N_{int}-1}{2}\left(
F^{lag}(U^{(l)}_i,U^{(k)}_i)-F^{lag}(U^{(k)}_i,U^{(l)}_i)
\right)\\
&+ \chi_{\lbrace X^{(k)}(x_{i+\frac{1}{2}}^{-},t) = 1 \rbrace} F^{lag}\left(U^{(l)}_i,U^{(k)}_i\right) - \chi_{\lbrace X^{(k)}(x_{i+\frac{1}{2}}^{-},t) = 0 \rbrace} F^{lag}\left(U^{(k)}_i,U^{(l)}_i\right) 
\end{split}
\end{equation*}
where the characteristic function $\chi$ over the even $\lbrace Y=1\rbrace$ is defined as
\begin{equation*}
\chi_{\lbrace Y = 1 \rbrace} = \begin{cases}
1 & \textit{ if }\, Y = 1\\
0 & \textit{otherwise}
\end{cases}.
\end{equation*}

\end{enumerate}

Hence, by putting together the two instances that may occur, one ends up with
\begin{equation*}
\begin{split}
\sum_{j=1}^{N(\omega)-1} 
&
F^{lag}(U^{j-1}_i,U^j_i)\left[X^{(k)}\right]_j 
= 
\frac{N_{int}}{2}\left(
F^{lag}(U^{(l)}_i,U^{(k)}_i)-F^{lag}(U^{(k)}_i,U^{(l)}_i)
\right) + \theta^{(k)}(\omega)
\end{split}
\end{equation*}
where the perturbation variable $\theta^{(k)}$ is defined as
\begin{equation*}
\begin{split}
\theta^{(k)}(\omega) &:= 
\chi_{\lbrace N_{int} \textit{ odd } \rbrace}
\Bigg[
\chi_{\lbrace X^{(k)}(x_{i+\frac{1}{2}}^{-},t) = 1 \rbrace} F^{lag}\left(U^{(l)}_i,U^{(k)}_i\right)
- \chi_{\lbrace X^{(k)}(x_{i+\frac{1}{2}}^{-},t) = 0 \rbrace} F^{lag}\left(U^{(k)}_i,U^{(l)}_i\right)\\
& 
\qquad\qquad
-\frac{1}{2}\left(F^{lag}(U^{(l)}_i,U^{(k)}_i)-F^{lag}(U^{(k)}_i,U^{(l)}_i)\right)
\Bigg]
\end{split}
\end{equation*}
We thus assume that $\mathcal{E}\left[\theta^{(k)}\right] = 0$ for each $k$, thus implying that the perturbation with respect to the first term $\frac{N_{int}}{2}\left(
F^{lag}(U^{(l)}_i,U^{(k)}_i)-F^{lag}(U^{(k)}_i,U^{(l)}_i)
\right)$ generated by an odd number of internal contributions is negligible in mean. Such an assumption is clearly not verified for a low number of interfaces.\\
Under such assumption, we end up with
\begin{equation}
\frac{1}{\Delta x}\sum_{j=1}^{N(\omega)-1} F^{lag}(U^{j-1}_i,U^j_i)\left[X^{(k)}\right]_j \approx \frac{N_{int}(\omega)}{2\Delta x}\left[F^{lag}(U^{(l)}_i,U^{(k)}_i)-F^{lag}(U^{(k)}_i,U^{(l)}_i)\right].
\end{equation}
So the semi discrete scheme reads
\begin{equation}
\label{semi-discrete}
\begin{split}
&\frac{d}{dt}\left(\frac{1}{\Delta x} \int_{x_{i-\frac{1}{2}}}^{x_{i+\frac{1}{2}}} (X^{(k)}\textbf{U}^{(k)})(x,t)\,dx\right) + \frac{1}{\Delta x}\left[X^{(k)}(x_{i+\frac{1}{2}},t)F(U^{n}_{i-1},U^{n}_i) - X^{(k)}(x_{i-\frac{1}{2}},t)F(U^{n}_{i},U^{n}_{i+1})\right]=\\
&\qquad +\frac{N_{int}(\omega)}{2\Delta x}
\left(
F^{lag}(U^{(l)}_i,U^{(k)}_i)-F^{lag}(U^{(k)}_i,U^{(l)}_i)
\right)
\\
&\qquad+\frac{1}{\Delta x}\Big(F^{lag}(U^+_{i-1},U^0_i)\left[X^{(k)}\right]_0+F^{lag}(U^{N(\omega)-1}_i,U^-_{i+1})\left[X^{(k)}\right]_{N(\omega)}\Big).
\end{split}
\end{equation}

\begin{table}
\begin{tabular}{c||c|c|c|c|c|c|c}
\textbf{Cases}&
\multicolumn{3}{c|}{\textbf{Cell Phase}} &
\multicolumn{2}{|c|}{\textbf{Jumps}} &
\multicolumn{2}{|c}{\textbf{Lagrangian Fluxes}}\\
\hline
& $\mathbf{[\xi_{j-1},\xi_{j}]}$ & $\mathbf{[\xi_{j},\xi_{j+1}]}$ & $\mathbf{[\xi_{j+1},\xi_{j+2}]}$ & $\mathbf{[X^{(k)}]_j}$ & $\mathbf{[X^{(k)}]_{j+1}}$ & $\mathbf{F^{lag}(U^{j-1}_i,U^{j}_i)}$ & $\mathbf{F^{lag}(U^{j}_i,U^{j+1}_i)}$ \\
\hline
\hline
$\mathbf{1}$ & $\Sigma_l$&$\Sigma_k$&$\Sigma_l$ & $1$ & $-1$ & $F^{lag}(U^{(l)}_i,U^{(k)}_i)$ & $F^{lag}(U^{(k)}_i,U^{(l)}_i)$ \\
\hline
$\mathbf{2}$ & $\Sigma_k$&$\Sigma_l$&$\Sigma_k$ & $-1$ & $1$ & $F^{lag}(U^{(k)}_i,U^{(l)}_i)$ & $F^{lag}(U^{(l)}_i,U^{(k)}_i)$ \\
\hline
\end{tabular}
\caption{Possible configuration for the subcell $\left[\xi_j,\xi_{j+1}\right]$ and relative jumps across discontinuity, as well as Lagrangian fluxes. Integers $k\neq l\in \lbrace 1,2\rbrace$ denote phase indexes. }\label{Tab:istances}
\end{table}
\item Ensemble average of all realizations: taking ensemble average in (\ref{semi-discrete}) and with reference to the notation (\ref{EA_vars}), we obtain,
\begin{equation}
\label{ensamble_semi-discrete}
\begin{split}
&\frac{d}{dt}\left(\alpha_k\textbf{U}_k\right)_i + \frac{1}{\Delta x}\Bigg[\mathcal{E}\left[X^{(k)}(x_{i+\frac{1}{2}},t)F(U^{n}_{i-1},U^{n}_i) \right]-\mathcal{E}\left[ X^{(k)}(x_{i-\frac{1}{2}},t)F(U^{n}_{i},U^{n}_{i+1})\right]\Bigg]=\\
&\qquad +\mathcal{E}\left[\frac{N_{int}(\omega)}{2\Delta x}\right]\left( F^{lag}(U^{(l)}_i,U^{(k)}_i)-F^{lag}(U^{(k)}_i,U^{(l)}_i)\right)\\
&\qquad+\frac{1}{\Delta x}\Bigg(\mathcal{E}\left[ F^{lag}(U^+_{i-1},U^0_i)\left[X^{(k)}\right]_0\right]+\mathcal{E}\left[ F^{lag}(U^{N(\omega)-1}_i,U^-_{i+1})\left[X^{(k)}\right]_{N(\omega)}\right]\Bigg).
\end{split}
\end{equation}
\end{enumerate}

\section{The one-parameter mesoscopic scheme}
\label{sec:onePar}

\noindent
In order to be of practical use, the scheme (\ref{ensamble_semi-discrete}) requires the specification of four different terms:
\begin{itemize}
\item[$\bullet$] $\mathcal{E}\left[\frac{N_{int}(\omega)}{2\Delta x}\right]$ : the average number of internal components of the dispersed phase in cell $\mathcal{C}_i$;
\item[$\bullet$] $\mathcal{E}\left[X^{(k)}(x_{i+\frac{1}{2}},t^n)F(U^{n}_{i-1},U^{n}_i) \right]$: the conservative numerical flux;
\item[$\bullet$] $\mathcal{E}\left[ F^{lag}(U^+_{i-1},U^0_i)\left[X^{(k)}\right]_0\right]$ the left non-conservative term;
\item[$\bullet$] $\mathcal{E}\left[ F^{lag}(U^{N(\omega)-1}_i,U^-_{i+1})\left[X^{(k)}\right]_{N(\omega)}\right]$ : the right non-conservative term.
\end{itemize}
Building upon the work of Abgrall and Saurel \cite{Abgrall&Saurel}, the aforementioned ensemble averages can be simplified by noticing that the random variable $X^{(k)}$ is in fact discrete, and its average can be written as the sum of all the instances multiplied by their probability of occurrence. In the following we will make use of the following notation
\begin{equation*}
\begin{split}
\mathcal{P}_{i+\frac{1}{2}}\left[\Sigma_p,\Sigma_p\right] &:= \mathcal{P}_{i+\frac{1}{2}}\left[\lbrace X^{(p)}(x_{i+\frac{1}{2}}^+,t^n) = 1, X^{(p)}(x_{i+\frac{1}{2}}^-,t^n) = 1   \rbrace\right]\\
\mathcal{P}_{i+\frac{1}{2}}\left[\Sigma_p,\Sigma_q\right] &:= \mathcal{P}_{i+\frac{1}{2}}\left[\lbrace X^{(p)}(x_{i+\frac{1}{2}}^+,t^n) = 1, X^{(q)}(x_{i+\frac{1}{2}}^-,t^n) = 0  \rbrace\right]
\end{split}
\end{equation*}
for each phase index $p\neq q \in \lbrace 1, 2\rbrace$, with the notation $X^{(p)}(x_{i+\frac{1}{2}}^\pm,t^n) = \lim_{x\rightarrow x_{i+\frac{1}{2}}^\pm} X^{(p)}(x,t^n)$, for a prescribed time level $t=t^n$. Notice that, these probabilities are defined in terms of different characteristic functions $X^{(p)}$. Nevertheless, fixing the phase $k\neq l \in \lbrace 1,2 \rbrace$, one can equivalently rewrite these latter probabilities in terms of \emph{one} characteristic function
\begin{equation}
\label{Prob_def}
\begin{split}
\mathcal{P}_{i+\frac{1}{2}}\left[\Sigma_k,\Sigma_k\right] &= \mathcal{P}_{i+\frac{1}{2}}\left[\lbrace X^{(k)}(x_{i+\frac{1}{2}}^+,t^n) = 1, X^{(k)}(x_{i+\frac{1}{2}}^-,t^n) = 1   \rbrace\right]\\
\mathcal{P}_{i+\frac{1}{2}}\left[\Sigma_k,\Sigma_l\right] &= \mathcal{P}_{i+\frac{1}{2}}\left[\lbrace X^{(k)}(x_{i+\frac{1}{2}}^+,t^n) = 1, X^{(l)}(x_{i+\frac{1}{2}}^-,t^n) = 0  \rbrace\right]\\
\mathcal{P}_{i+\frac{1}{2}}\left[\Sigma_l,\Sigma_k\right] &= \mathcal{P}_{i+\frac{1}{2}}\left[\lbrace X^{(k)}(x_{i+\frac{1}{2}}^+,t^n) = 0, X^{(k)}(x_{i+\frac{1}{2}}^-,t^n) = 1  \rbrace\right]\\
\mathcal{P}_{i+\frac{1}{2}}\left[\Sigma_l,\Sigma_l\right] &= \mathcal{P}_{i+\frac{1}{2}}\left[\lbrace X^{(k)}(x_{i+\frac{1}{2}}^+,t^n) = 0, X^{(k)}(x_{i+\frac{1}{2}}^-,t^n) = 0  \rbrace\right]\\
\end{split}
\end{equation}
Moreover, we define the flux indicator function
\begin{equation}
\label{beta}
\beta_{i+\frac{1}{2}}^{(p,q)} := \mathrm{sign}\left(\sigma\left(U_{i}^{(p)}, U_{i+1}^{(l)}\right)\right) = \begin{cases}
1 & \textit{if}\quad\sigma\left(U_{i}^{(p)}, U_{i+1}^{(l)}\right) \geq 0\\
-1 & \textit{if}\quad\sigma\left(U_{i}^{(p)}, U_{i+1}^{(l)}\right) \leq 0
\end{cases}
\end{equation}
and the notation $a^+ := \max(a,0)$, $a^- := \min(a,0)$.\\ 
Estimation of three of the above quantities is accomplished as follows:
\begin{enumerate}
\item[$\bullet$] \underline{Conservative Terms}: We require that the Godunov state $U^*_{i+\frac{1}{2}}(0)$ \cite{ToroRS} (i.e. the solution of the Riemann Problem at the right cell interface at time $t=0$) belongs to the phase $k$ or not - see Fig. \ref{Fig:RP}. Hence,
\begin{equation}
\label{Cons_terms_Est}
\begin{split}
\mathcal{E}&\left[X^{(k)}(x_{i+\frac{1}{2}},t^n)F(U^{n}_{i-1},U^{n}_i) \right] = \mathcal{P}_{i+\frac{1}{2}}\left[\Sigma_k,\Sigma_k\right]F\left(U^{(k)}_{i}, U^{(k)}_{i+1}\right)\\
&\qquad + \left(\beta_{i+\frac{1}{2}}^{(k,l)}\right)^+ \mathcal{P}_{i+\frac{1}{2}}\left[\Sigma_k,\Sigma_l\right] F\left(U^{(k)}_{i},U^{(l)}_{i+1}\right) + \left(-\beta_{i+\frac{1}{2}}^{(l,k)}\right)^+ \mathcal{P}_{i+\frac{1}{2}}\left[\Sigma_l,\Sigma_k\right]F\left(U^{(l)}_{i},U^{(k)}_{i+1}\right)
\end{split}
\end{equation}

\begin{figure}
	\begin{center}
		\begin{tikzpicture}
		
		\pgfmathsetmacro{\xL}{-2.5};
		\pgfmathsetmacro{\len}{7};
		\pgfmathsetmacro{\xLR}{\xL + \len};
		\pgfmathsetmacro{\sp}{1};
		\pgfmathsetmacro{\xRL}{\xL + \len + \sp};
		\pgfmathsetmacro{\xRR}{\xRL + \len};		
		\pgfmathsetmacro{\hei}{5};
		\pgfmathsetmacro{\seg}{\hei-1};
			
		\tkzDefPoint(\xL,0){x0};
		\tkzDefPoint(\xLR, 0){x1};
		\tkzLabelPoint[below](x1){$x$}		
		\tkzDefPoint(\xRL,0){x2};
		\tkzDefPoint(\xRR, 0){x3};
		\tkzLabelPoint[below](x3){$x$}
		
		\tkzDefPoint(\xL + 0.5*\len, 0){xm1};
		\tkzLabelPoint[below](xm1){$x_{i+\frac{1}{2}}$};
		\tkzDefPoint(\xRL + 0.5*\len,0){xm2};
		\tkzLabelPoint[below](xm2){$x_{i+\frac{1}{2}}$};
		
		\tkzDefPoint(\xL + 0.5*\len, \hei){y1};
		\tkzLabelPoint[left](y1){$t$}
		\tkzDefPoint(\xL + \len + \sp + 0.5*\len,\hei){y2};
		\tkzLabelPoint[left](y2){$t$}
		
		\tkzDefPoint(\xL + 0.5*\len, \seg){ym1};
%		\tkzLabelPoint[above, xshift=3mm, font=\tiny](ym1){$T$}
		\tkzDefPoint(\xRL+ 0.5*\len,\seg){ym2};
%		\tkzLabelPoint[above, xshift=3mm, font=\tiny](ym2){$T$}
			
		%Axes
		\draw [-stealth] (x0) -- (x1);
		\draw [-stealth] (x2) -- (x3);		
		
		\draw [-stealth] (xm1) -- (y1);
		\draw [-stealth] (xm2) -- (y2);
		
		%Segment
		\draw [thick, color=red] (xm1) -- (ym1);
		\draw [thick, color=red] (xm2) -- (ym2);	
		
%		\draw [dotted] (\xL, \seg) -- (\xLR ,\seg);
%		\draw [dotted] (\xRL, \seg) -- (\xRR,\seg);
		
		%Velocity
		\tkzDefPoint(\xL + 0.75*\len,\seg){xs1};
		\tkzLabelPoint[above](xs1){$\sigma_{i+\frac{1}{2}}$};		
		\tkzDefPoint(\xRL + 0.25*\len,\seg){xs2};
		\tkzLabelPoint[above](xs2){$\sigma_{i+\frac{1}{2}}$};
		
		\tkzDefPoint(\xL + 0.25*\len,\seg){xs1a};
		\tkzLabelPoint[above, color=red, font=\small](xs1a){$X^{(k)}(x_{i+\frac{1}{2}},t)\equiv 0$};
		\tkzDefPoint(\xRL + 0.75*\len,\seg){xs2a};
		\tkzLabelPoint[above, color=red, font=\small](xs2a){$X^{(k)}(x_{i+\frac{1}{2}},t)\equiv 1$};
			
		\draw [dashed] (xm1) -- (xs1);
		\draw [dashed] (xm2) -- (xs2);
		
		\draw [-stealth, color=red] (\xL + 0.5*\len - 0.1, 0.5*\seg) -- (xs1a);
		\draw [-stealth, color=red] (\xRL + 0.5*\len + 0.1, 0.5*\seg) -- (xs2a);
		
		\node at (\xL+0.25*\len, 0.5*\seg) {$\mathbf{U}^{(l)}_{i}$};
		\node at (\xL+0.75*\len, 0.5*\seg) {$\mathbf{U}^{(k)}_{i+1}$};
		\node at (\xRL+0.25*\len, 0.5*\seg) {$\mathbf{U}^{(l)}_{i}$};
		\node at (\xRL+0.75*\len, 0.5*\seg) {$\mathbf{U}^{(k)}_{i+1}$};
		\end{tikzpicture}
	\end{center}
	\caption{Schematic representation of the Godunov state $U_{i+\frac{1}{2}}(t)\vert_{t=0}$ at the right cell interface.}\label{Fig:RP}
\end{figure}
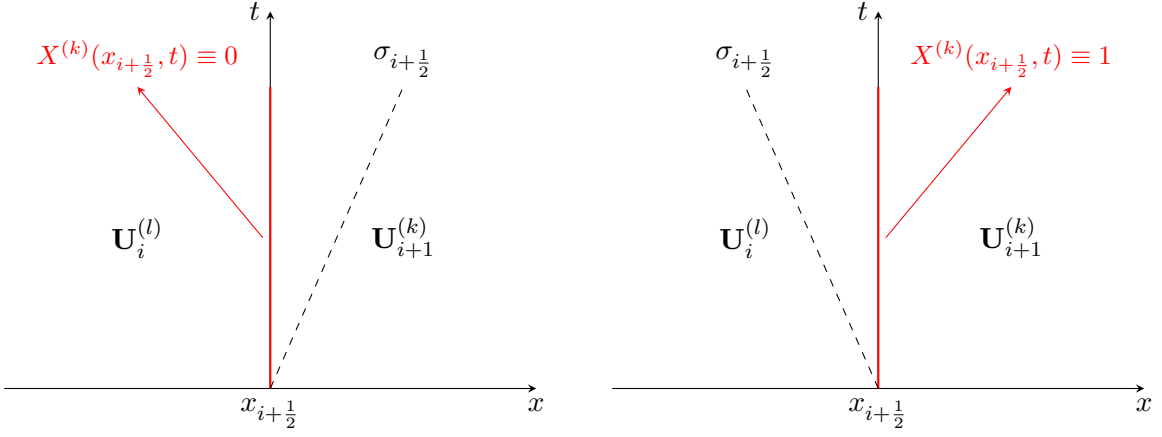

\item[$\bullet$] \underline{Right Non-Conservative Terms}: We need to make sure that a Lagrangian flux exists at the right interface, i.e. inflow is occurring.
\begin{equation}
\label{RNC_terms_Est}
\begin{split}
\mathcal{E}&\left[ F^{lag}(U^{N(\omega)-1}_i,U^-_{i+1})\left[X^{(k)}\right]_{N(\omega)}\right] = \left(-\beta_{i+\frac{1}{2}}^{(l,k)}\right)^+ \mathcal{P}_{i+\frac{1}{2}}\left[\Sigma_l, \Sigma_k\right] F^{lag}\left(U^{(l)}_{i},U^{(k)}_{i+1}\right)\\
&\qquad
-\left(-\beta_{i+\frac{1}{2}}^{(k,l)}\right)^+ \mathcal{P}_{i+\frac{1}{2}}\left[\Sigma_k, \Sigma_l\right] F^{lag}\left(U^{(k)}_{i},U^{(l)}_{i+1}\right)
\end{split}
\end{equation}
\item[$\bullet$] \underline{Left Non-Conservative Terms}: We need to make sure that a Lagrangian flux exists at the left interface, i.e. inflow is occurring.
\begin{equation}
\label{LNC_terms_Est}
\begin{split}
\mathcal{E}&\left[ F^{lag}(U^+_{i-1},U^0_i)\left[X^{(k)}\right]_0\right]= \left(\beta_{i-\frac{1}{2}}^{(l,k)}\right)^+ \mathcal{P}_{i-\frac{1}{2}}\left[\Sigma_l, \Sigma_k\right] F^{lag}\left(U^{(l)}_{i-1},U^{(k)}_{i}\right)\\
&\qquad
-\left(\beta_{i-\frac{1}{2}}^{(k,l)}\right)^+ \mathcal{P}_{i-\frac{1}{2}}\left[\Sigma_k, \Sigma_l\right] F^{lag}\left(U^{(k)}_{i-1},U^{(l)}_{i}\right)
\end{split}
\end{equation}
\end{enumerate}

Using formulas (\ref{Cons_terms_Est})-(\ref{LNC_terms_Est}), the final scheme reads
\begin{equation}
\label{ensamble_semi-discrete_prob}
\begin{split}
&\frac{d}{dt}\left(\alpha_k\textbf{U}_k\right)_i + \frac{1}{\Delta x}\Bigg[ \mathcal{P}_{i+\frac{1}{2}}\left[\Sigma_k,\Sigma_k\right]F\left(U^{(k)}_{i}, U^{(k)}_{i+1}\right) + \left(\beta_{i+\frac{1}{2}}^{(k,l)}\right)^+ \mathcal{P}_{i+\frac{1}{2}}\left[\Sigma_k,\Sigma_l\right] F\left(U^{(k)}_{i},U^{(l)}_{i+1}\right)\\
&\qquad + \left(-\beta_{i+\frac{1}{2}}^{(l,k)}\right)^+ \mathcal{P}_{i+\frac{1}{2}}\left[\Sigma_l,\Sigma_k\right]F\left(U^{(l)}_{i},U^{(k)}_{i+1}\right) -  \mathcal{P}_{i-\frac{1}{2}}\left[\Sigma_k,\Sigma_k\right]F\left(U^{(k)}_{i-1}, U^{(k)}_{i}\right)\\
&\qquad - \left(\beta_{i-\frac{1}{2}}^{(k,l)}\right)^+ \mathcal{P}_{i-\frac{1}{2}}\left[\Sigma_k,\Sigma_l\right] F\left(U^{(k)}_{i-1},U^{(l)}_{i}\right) - \left(-\beta_{i-\frac{1}{2}}^{(l,k)}\right)^+ \mathcal{P}_{i-\frac{1}{2}}\left[\Sigma_l,\Sigma_k\right]F\left(U^{(l)}_{i-1},U^{(k)}_{i}\right)\Bigg]=\\
&\qquad +\mathcal{E}\left[\frac{N_{int}(\omega)}{\Delta x}\right]\left( F^{lag}(U^{(l)}_i,U^{(k)}_i)-F^{lag}(U^{(k)}_i,U^{(l)}_i)\right)\\
&\qquad+\frac{1}{\Delta x}\Bigg(\left(\beta_{i-\frac{1}{2}}^{(l,k)}\right)^+ \mathcal{P}_{i-\frac{1}{2}}\left[\Sigma_l, \Sigma_k\right] F^{lag}\left(U^{(l)}_{i-1},U^{(k)}_{i}\right)\\
&\qquad
-\left(\beta_{i-\frac{1}{2}}^{(k,l)}\right)^+ \mathcal{P}_{i-\frac{1}{2}}\left[\Sigma_k, \Sigma_l\right] F^{lag}\left(U^{(k)}_{i-1},U^{(l)}_{i}\right) + \left(-\beta_{i+\frac{1}{2}}^{(l,k)}\right)^+ \mathcal{P}_{i+\frac{1}{2}}\left[\Sigma_l, \Sigma_k\right] F^{lag}\left(U^{(l)}_{i},U^{(k)}_{i+1}\right)\\
&\qquad
-\left(-\beta_{i+\frac{1}{2}}^{(k,l)}\right)^+ \mathcal{P}_{i+\frac{1}{2}}\left[\Sigma_k, \Sigma_l\right] F^{lag}\left(U^{(k)}_{i},U^{(l)}_{i+1}\right)\Bigg).
\end{split}
\end{equation}

Notice that the topological equation for the volume fraction is then recovered from (\ref{ensamble_semi-discrete_prob}) by formally choosing $F\equiv 0$ and $\textbf{U}_k\equiv 1$, so that the Lagrangian flux reduces to $F^{lag} = 0 -\sigma \cdot 1 = -\sigma$.\\ Hence, the numerical scheme is then of practical use, once the probability coefficients are defined.\\
\textbf{ }\\
\noindent
Originally such probability coefficients were given by means of an ansatz, leading to a limited model, even if thermodynamically consistent \cite{Saurel2017,Saurel2018}. We are going to close the model by proving convexity of such probability coefficients. The following proposition summarizes the properties each probability coefficient has to verify \cite{Abgrall&Saurel}.

\begin{proposition}
Let $\mathfrak{P}_{i+\frac{1}{2}}=\Bigg(\mathcal{P}_{i+\frac{1}{2}}\left[\Sigma_p, \Sigma_p\right], \mathcal{P}_{i+\frac{1}{2}}\left[\Sigma_p, \Sigma_q\right]\Bigg)$ be a pair of probabilities coefficients defined in  (\ref{Prob_def}). 
Assume that
\begin{subequations}\label{ConsistencyCondsA}
\begin{align}
&\mathcal{P}_{i+\frac{1}{2}}\left[\Sigma_p, \Sigma_p\right] + \mathcal{P}_{i+\frac{1}{2}}\left[\Sigma_p, \Sigma_q\right] = \alpha_{i}^p\label{Condi}\\
&\mathcal{P}_{i+\frac{1}{2}}\left[\Sigma_p, \Sigma_p\right] + \mathcal{P}_{i+\frac{1}{2}}\left[\Sigma_q, \Sigma_p\right] = \alpha_{i+1}^p\label{Condi+1}
\end{align}
\end{subequations}
Then the following consistency conditions must hold: for each $p\neq q \in\lbrace 1, 2 \rbrace$ 
\begin{subequations}\label{ConsistencyConds}
\begin{align}
&\mathcal{P}_{i+\frac{1}{2}}\left[\Sigma_p, \Sigma_p\right]\leq \min\left(\alpha_{i}^p,\alpha_{i+1}^p\right)\label{Condmin}\\
&\mathcal{P}_{i+\frac{1}{2}}\left[\Sigma_p, \Sigma_q\right]\geq \max\left(\alpha_{i}^p-\alpha_{i+1}^p,0\right)\label{Condmax}
\end{align}
\end{subequations}
where the two neighbouring volume fractions verify the $\emph{saturation condition}$
\begin{equation}
\label{vol}
\alpha^p_j \in [0,1]\qquad \textit{ and } \qquad \alpha^p_j + \alpha^q_j = 1 \qquad\forall j\in\lbrace i, i+1\rbrace.
\end{equation}
\end{proposition}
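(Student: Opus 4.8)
The plan is to recognize the four coefficients $\mathcal{P}_{i+\frac{1}{2}}[\Sigma_p,\Sigma_p]$, $\mathcal{P}_{i+\frac{1}{2}}[\Sigma_p,\Sigma_q]$, $\mathcal{P}_{i+\frac{1}{2}}[\Sigma_q,\Sigma_p]$, $\mathcal{P}_{i+\frac{1}{2}}[\Sigma_q,\Sigma_q]$ defined in (\ref{Prob_def}) as the entries of the joint law of the pair of interfacial phase-indicators $\big(X^{(p)}(x_{i+\frac{1}{2}}^-), X^{(p)}(x_{i+\frac{1}{2}}^+)\big)$, and to read off (\ref{Condmin})--(\ref{Condmax}) as the Fréchet--Hoeffding bounds for a $2\times 2$ contingency table with prescribed marginals. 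The only properties needed are that each of the four coefficients is nonnegative---immediate, since each is defined as $\mathcal{P}_{i+\frac{1}{2}}[\cdot]$ of an event---together with the two marginal identities (\ref{Condi})--(\ref{Condi+1}) assumed in the statement. I will not even require the full normalization of the joint law, so the argument reduces to elementary arithmetic; the saturation identity (\ref{vol}) enters only to guarantee $\alpha_i^p,\alpha_{i+1}^p\in[0,1]$.

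First I would prove the upper bound (\ref{Condmin}). Rewriting (\ref{Condi}) as $\mathcal{P}_{i+\frac{1}{2}}[\Sigma_p,\Sigma_p] = \alpha_i^p - \mathcal{P}_{i+\frac{1}{2}}[\Sigma_p,\Sigma_q]$ and using $\mathcal{P}_{i+\frac{1}{2}}[\Sigma_p,\Sigma_q]\geq 0$ gives $\mathcal{P}_{i+\frac{1}{2}}[\Sigma_p,\Sigma_p]\leq \alpha_i^p$; symmetrically, (\ref{Condi+1}) together with $\mathcal{P}_{i+\frac{1}{2}}[\Sigma_q,\Sigma_p]\geq 0$ gives $\mathcal{P}_{i+\frac{1}{2}}[\Sigma_p,\Sigma_p]\leq \alpha_{i+1}^p$. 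Retaining the smaller of the two upper bounds yields $\mathcal{P}_{i+\frac{1}{2}}[\Sigma_p,\Sigma_p]\leq\min(\alpha_i^p,\alpha_{i+1}^p)$.

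Next I would prove the lower bound (\ref{Condmax}). Solving (\ref{Condi}) for the off-diagonal term gives $\mathcal{P}_{i+\frac{1}{2}}[\Sigma_p,\Sigma_q] = \alpha_i^p - \mathcal{P}_{i+\frac{1}{2}}[\Sigma_p,\Sigma_p]$, and substituting the bound $\mathcal{P}_{i+\frac{1}{2}}[\Sigma_p,\Sigma_p]\leq \alpha_{i+1}^p$ just obtained produces $\mathcal{P}_{i+\frac{1}{2}}[\Sigma_p,\Sigma_q]\geq \alpha_i^p-\alpha_{i+1}^p$. Combining this with the trivial inequality $\mathcal{P}_{i+\frac{1}{2}}[\Sigma_p,\Sigma_q]\geq 0$ (again nonnegativity of a probability) gives $\mathcal{P}_{i+\frac{1}{2}}[\Sigma_p,\Sigma_q]\geq \max(\alpha_i^p-\alpha_{i+1}^p,0)$, which is (\ref{Condmax}). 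The same computation with the roles of $p$ and $q$, and of $i$ and $i+1$, interchanged then covers the remaining cases announced by \emph{for each} $p\neq q$.

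I do not expect a genuine obstacle here: the statement is an elementary consequence of nonnegativity plus the two marginal constraints, and its entire content is the observation that (\ref{Condi})--(\ref{Condi+1}) force the diagonal coefficient below each marginal while pushing the off-diagonal coefficient above their difference. The single point deserving an explicit line is that the four quantities in (\ref{Prob_def}) are nonnegative, which holds by construction. For completeness I would also remark that (\ref{vol}) makes the thresholds $\min(\alpha_i^p,\alpha_{i+1}^p)$ and $\max(\alpha_i^p-\alpha_{i+1}^p,0)$ themselves lie in $[0,1]$, so the bounds are consistent with the coefficients being genuine probabilities; this is a consistency remark rather than part of the derivation.
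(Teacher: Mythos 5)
Your proof is correct and complete: nonnegativity of the four coefficients in (\ref{Prob_def}) together with the two marginal identities (\ref{Condi})--(\ref{Condi+1}) is all that is needed, and reading the bounds as Fr\'echet--Hoeffding bounds for a $2\times 2$ table with prescribed marginals is exactly the right framing. The paper itself prints no proof of this proposition (it is attributed to Abgrall and Saurel), but the argument given in Appendix~\ref{appendix:Proof} for the companion bounds of Proposition~\ref{Prop:NewProbs} is essentially yours---bounding a joint probability by a marginal, which is the same as invoking nonnegativity of the complementary table entry, and then using (\ref{Condi})--(\ref{Condi+1})---so your route coincides with the paper's approach, and your remark that the saturation condition (\ref{vol}) plays no role in the derivation itself is also accurate.
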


A probability pair $\mathfrak{P}$ will be termed a \emph{consistent} probability pair if it verify (\ref{ConsistencyCondsA})-(\ref{ConsistencyConds}), under the assumption that (\ref{vol}) holds.

\begin{remark}\label{Rem:Strat}
Notice that the pair consisting of the two bounds in (\ref{ConsistencyConds})
$$\mathfrak{P}^0_{i+\frac{1}{2}} := \Bigg(\mathcal{P}^0_{i+\frac{1}{2}}\left[\Sigma_p,\Sigma_p\right],\mathcal{P}^0_{i+\frac{1}{2}}\left[\Sigma_p,\Sigma_q\right]\Bigg) := \Bigg(\min\left(\alpha^p_{i},\alpha^p_{i+1}\right), \max\left(\alpha^p_{i}-\alpha^p_{i+1},0\right) \Bigg)$$
is itself a consistent probability pair.
\end{remark}

\noindent
Due to this remark, Abgrall and Saurel proposed the following approximation for the probability coefficients
\begin{equation}
\label{AS_prob}
\begin{split}
\mathcal{P}_{i+\frac{1}{2}}\left[\Sigma_k, \Sigma_k\right] \approx  \min(\alpha_{i}^k,\alpha_{i+1}^k),\qquad \mathcal{P}_{i+\frac{1}{2}}\left[\Sigma_k, \Sigma_l\right] \approx \max(\alpha_{i}^k-\alpha_{i+1}^k,0)\\
\mathcal{P}_{i+\frac{1}{2}}\left[\Sigma_l, \Sigma_l\right] \approx  \min(\alpha_{i}^l,\alpha_{i+1}^l),\qquad \mathcal{P}_{i+\frac{1}{2}}\left[\Sigma_l, \Sigma_k\right] \approx \max(\alpha_{i}^l-\alpha_{i+1}^l,0)
\end{split}
\end{equation}
\noindent
An interesting features of this choice is that it has both mathematical and physical implications. First, from the mathematical point of view, it can be shown that fixing the probability coefficients $\mathcal{P}_{i+\frac{1}{2}}\left[\Sigma_p,\Sigma_p\right] = \mathcal{P}^0_{i+\frac{1}{2}}\left[\Sigma_p,\Sigma_p\right]$ then, as to form a consistent probability pair, we have no other choice but $\mathcal{P}_{i+\frac{1}{2}}\left[\Sigma_p,\Sigma_q\right]=\mathcal{P}^0_{i+\frac{1}{2}}\left[\Sigma_p,\Sigma_q\right]$. The viceversa also holds. Furthermore, the pair $\mathfrak{P}_{i+\frac{1}{2}}^0$ constitutes an upper-lower bound for any pair of probability coefficients $\mathfrak{P}_{i+\frac{1}{2}}$, respectively, according to (\ref{Condi}) and (\ref{Condi+1}). Thus, such probability pair is an extreme point in the space of consistent probability pairs.

On the other hand, there is an interesting example that helps understanding the physical implication of choosing $\mathfrak{P}_{i+\frac{1}{2}} = \mathfrak{P}^0_{i+\frac{1}{2}}$:
consider a tube filled with two different fluids one surrounded by the other with no dispersion of one phase into the complementary one. 
We will term this physical regime as \emph{stratified flow}.
Let us consider if the DEM scheme with $\mathfrak{P}_{i+\frac{1}{2}}=\mathfrak{P}^0_{i+\frac{1}{2}}$ yields reasonable approximations of such flow regime, see Fig. \ref{Fig:Strat}.
First notice that each entry in the probability pair $\mathfrak{P}^0_{i+\frac{1}{2}}$ is not zero, that is, it is not zero the coefficient of each flux of the type $F(\Sigma_m,\Sigma_m)$ and $F(\Sigma_m, \Sigma_n)$ with $m\neq n\in\lbrace 1,2\rbrace$ in (\ref{ensamble_semi-discrete}).\\

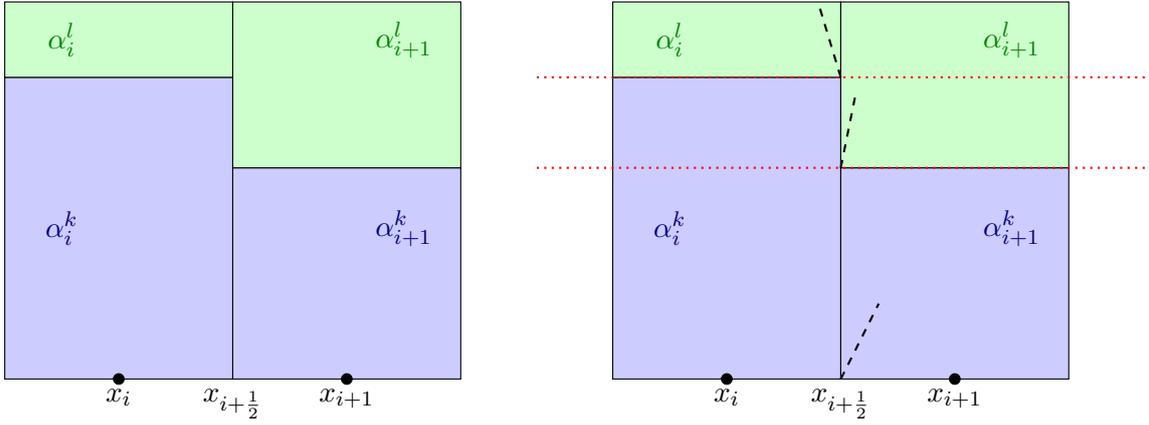
\begin{figure}[h!]
\begin{center}
		\begin{tikzpicture}
		
		\pgfmathsetmacro{\Dx}{3};
		\pgfmathsetmacro{\hei}{4};		
		
		\pgfmathsetmacro{\xL}{-5};
		\pgfmathsetmacro{\xR}{\xL + \Dx};
		
		\pgfmathsetmacro{\yL}{0};
		\pgfmathsetmacro{\yR}{0.7*\hei};
		
		\pgfmathsetmacro{\tp}{1};
		
		\tkzDefPoint(\xL + 0.5*\Dx, 0){xi};
		\tkzLabelPoint[below](xi){$x_{i}$};
		\tkzDefPoint(\xL + \Dx, 0){xih};
		\tkzLabelPoint[below](xih){$x_{i+\frac{1}{2}}$};
		\tkzDefPoint(\xL + 1.5*\Dx, 0){xip};
		\tkzLabelPoint[below](xip){$x_{i+1}$};	
				
		%Rectangles		
		\draw [fill=blue!20] (\xL,\yL) rectangle (\xR,\hei);
		\draw [fill=blue!20] (\xR,\yL) rectangle (\xR+\Dx,\yR);
		\draw [fill=green!20] (\xL,\hei) rectangle (\xR,\hei + \tp);
		\draw [fill=green!20] (\xR,\yR) rectangle (\xR+\Dx,\tp + \hei);
		
		\node [color=green!50!black] at (\xL+0.25*\Dx, \hei + 0.5*\tp) {$\alpha^l_{i}$};
		\node [color=blue!50!black] at (\xL+0.25*\Dx, 0.5*\hei) {$\alpha^k_{i}$};
		\node [color=green!50!black] at (\xR+0.75*\Dx, \hei + 0.5*\tp) {$\alpha^l_{i+1}$};
		\node [color=blue!50!black] at (\xR+0.75*\Dx, 0.5*\hei) {$\alpha^k_{i+1}$};	
		
		\foreach \n in {xi,xip}{
			\filldraw[black] (\n) circle (2pt); 
		}	
		
		\pgfmathsetmacro{\sp}{2};
		\pgfmathsetmacro{\xLR}{\xR + \Dx + \sp};
		\pgfmathsetmacro{\xRR}{\xLR + \Dx};
	
		\tkzDefPoint(\xLR + 0.5*\Dx, 0){xiR};
		\tkzLabelPoint[below](xiR){$x_{i}$};
		\tkzDefPoint(\xLR + \Dx, 0){xihR};
		\tkzLabelPoint[below](xihR){$x_{i+\frac{1}{2}}$};
		\tkzDefPoint(\xLR + 1.5*\Dx, 0){xipR};
		\tkzLabelPoint[below](xipR){$x_{i+1}$};

		%Rectangles		
		\draw [fill=blue!20] (\xLR,\yL) rectangle (\xRR,\hei);
		\draw [fill=blue!20] (\xRR,\yL) rectangle (\xRR+\Dx,\yR);
		\draw [fill=green!20] (\xLR,\hei) rectangle (\xRR,\hei + \tp);
		\draw [fill=green!20] (\xRR,\yR) rectangle (\xRR+\Dx,\tp + \hei);
		
		\node [color=green!50!black] at (\xLR+0.25*\Dx, \hei + 0.5*\tp) {$\alpha^l_{i}$};
		\node [color=blue!50!black] at (\xLR+0.25*\Dx, 0.5*\hei) {$\alpha^k_{i}$};
		\node [color=green!50!black] at (\xRR+0.75*\Dx, \hei + 0.5*\tp) {$\alpha^l_{i+1}$};
		\node [color=blue!50!black] at (\xRR+0.75*\Dx, 0.5*\hei) {$\alpha^k_{i+1}$};	
		
		\foreach \n in {xiR,xipR}{
			\filldraw[black] (\n) circle (2pt); 
		}	
		
		\draw [dotted, thick, color=red] (\xLR - 1, \yR) -- (\xRR + \Dx + 1, \yR);
		\draw [dotted, thick, color=red] (\xLR - 1, \hei) -- (\xRR + \Dx + 1, \hei);	
		
		%Velocities
		\draw [dashed, thick] (\xLR+\Dx, \yR) -- (\xLR+\Dx + 0.2, \yR + \tp);
		\draw [dashed, thick] (\xLR+\Dx, 0) -- (\xLR+\Dx + 0.5, \tp);
		\draw [dashed, thick] (\xLR+\Dx, \hei) -- (\xLR+\Dx - 0.3, \hei + \tp);
		
		\end{tikzpicture}
	\end{center}
	\caption{Schematic representation of a stratified flow at the numerical level.}\label{Fig:Strat}
\end{figure}

By computing the corresponding probability pairs, one can convince oneself that these acts as flux-weights in (\ref{ensamble_semi-discrete}), corresponding to the area of the surface through which a specific flux is applied.\\
Unfortunately, the same computations would also be carried out in the case of disconnected phases at the interface, see Fig. \ref{Fig:Disp}.
This second case will be called \emph{dispersed flow}, where the phase that does not share a segment of the cell interface is called the dispersed phase.
In such a case, each probability coefficient $\mathfrak{P}_{i+\frac{1}{2}}=\mathfrak{P}^0_{i+\frac{1}{2}}$ would still be non vanishing, thus introducing in the computation a non-zero numerical flux for the disperse phase, even if the regime is discontinuous. At the physical level, for the dispersed phase, this is equivalent to saying that a sound wave propagated in the dispersed phase in cell $i$ gets propagated into the corresponding phase of cell $i+1$ even if no material connection between the phases exists. 

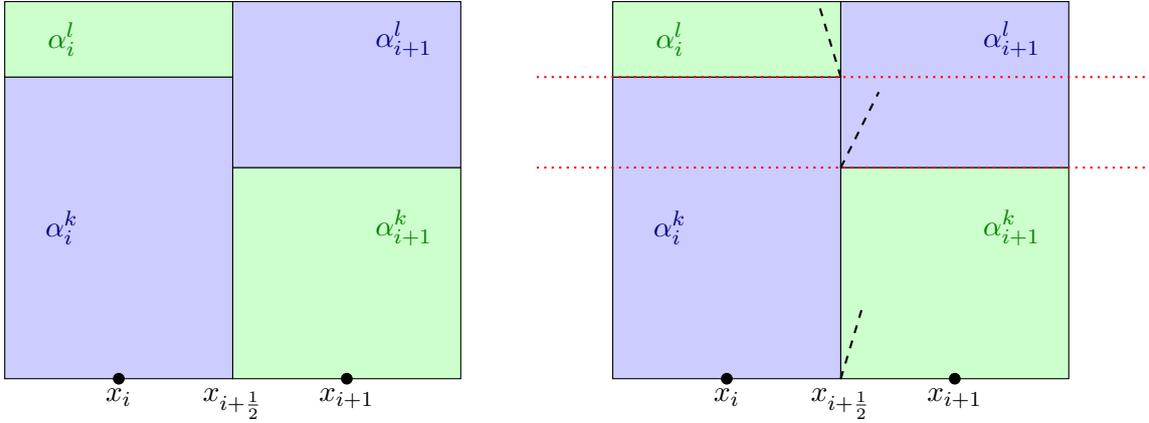
\begin{figure}[h!]
\begin{center}
		\begin{tikzpicture}
		
		\pgfmathsetmacro{\Dx}{3};
		\pgfmathsetmacro{\hei}{4};		
		
		\pgfmathsetmacro{\xL}{-5};
		\pgfmathsetmacro{\xR}{\xL + \Dx};
		
		\pgfmathsetmacro{\yL}{0};
		\pgfmathsetmacro{\yR}{0.7*\hei};
		
		\pgfmathsetmacro{\tp}{1};
		
		\tkzDefPoint(\xL + 0.5*\Dx, 0){xi};
		\tkzLabelPoint[below](xi){$x_{i}$};
		\tkzDefPoint(\xL + \Dx, 0){xih};
		\tkzLabelPoint[below](xih){$x_{i+\frac{1}{2}}$};
		\tkzDefPoint(\xL + 1.5*\Dx, 0){xip};
		\tkzLabelPoint[below](xip){$x_{i+1}$};	
				
		%Rectangles		
		\draw [fill=blue!20] (\xL,\yL) rectangle (\xR,\hei);
		\draw [fill=green!20] (\xR,\yL) rectangle (\xR+\Dx,\yR);
		\draw [fill=green!20] (\xL,\hei) rectangle (\xR,\hei + \tp);
		\draw [fill=blue!20] (\xR,\yR) rectangle (\xR+\Dx,\tp + \hei);
		
		\node [color=green!50!black] at (\xL+0.25*\Dx, \hei + 0.5*\tp) {$\alpha^l_{i}$};
		\node [color=blue!50!black] at (\xL+0.25*\Dx, 0.5*\hei) {$\alpha^k_{i}$};
		\node [color=blue!50!black] at (\xR+0.75*\Dx, \hei + 0.5*\tp) {$\alpha^l_{i+1}$};
		\node [color=green!50!black] at (\xR+0.75*\Dx, 0.5*\hei) {$\alpha^k_{i+1}$};	
		
		\foreach \n in {xi,xip}{
			\filldraw[black] (\n) circle (2pt); 
		}	
		
		\pgfmathsetmacro{\sp}{2};
		\pgfmathsetmacro{\xLR}{\xR + \Dx + \sp};
		\pgfmathsetmacro{\xRR}{\xLR + \Dx};
	
		\tkzDefPoint(\xLR + 0.5*\Dx, 0){xiR};
		\tkzLabelPoint[below](xiR){$x_{i}$};
		\tkzDefPoint(\xLR + \Dx, 0){xihR};
		\tkzLabelPoint[below](xihR){$x_{i+\frac{1}{2}}$};
		\tkzDefPoint(\xLR + 1.5*\Dx, 0){xipR};
		\tkzLabelPoint[below](xipR){$x_{i+1}$};

		%Rectangles		
		\draw [fill=blue!20] (\xLR,\yL) rectangle (\xRR,\hei);
		\draw [fill=green!20] (\xRR,\yL) rectangle (\xRR+\Dx,\yR);
		\draw [fill=green!20] (\xLR,\hei) rectangle (\xRR,\hei + \tp);
		\draw [fill=blue!20] (\xRR,\yR) rectangle (\xRR+\Dx,\tp + \hei);
		
		\node [color=green!50!black] at (\xLR+0.25*\Dx, \hei + 0.5*\tp) {$\alpha^l_{i}$};
		\node [color=blue!50!black] at (\xLR+0.25*\Dx, 0.5*\hei) {$\alpha^k_{i}$};
		\node [color=blue!50!black] at (\xRR+0.75*\Dx, \hei + 0.5*\tp) {$\alpha^l_{i+1}$};
		\node [color=green!50!black] at (\xRR+0.75*\Dx, 0.5*\hei) {$\alpha^k_{i+1}$};	
		
		\foreach \n in {xiR,xipR}{
			\filldraw[black] (\n) circle (2pt); 
		}	
		
		\draw [dotted, thick, color=red] (\xLR - 1, \yR) -- (\xRR + \Dx + 1, \yR);
		\draw [dotted, thick, color=red] (\xLR - 1, \hei) -- (\xRR + \Dx + 1, \hei);	
		
		%Velocities
		\draw [dashed, thick] (\xLR+\Dx, 0) -- (\xLR+\Dx + 0.3, \tp);		
		\draw [dashed, thick] (\xLR+\Dx, \yR) -- (\xLR+\Dx + 0.5, \yR + \tp);
		\draw [dashed, thick] (\xLR+\Dx, \hei) -- (\xLR+\Dx - 0.3, \hei + \tp);
		\end{tikzpicture}
	\end{center}
\caption{Schematic representation of a numerical dispersed flow of phase $k$ into phase $l$: the DEM method would predict $\mathcal{P}_{i+\frac{1}{2}}\left[\Sigma_k,\Sigma_k\right] = \alpha_i^k > 0$ even if there is no continuous flow at cell interface.}\label{Fig:Disp}
\end{figure}

These considerations, motivated us to investigate the structure of such probability coefficients: the following proposition identify the complementary lower-upper bounds.

\begin{proposition}\label{Prop:NewProbs}
Let $\mathfrak{P}_{i+\frac{1}{2}}=\Bigg(\mathcal{P}_{i+\frac{1}{2}}\left[\Sigma_p, \Sigma_p\right], \mathcal{P}_{i+\frac{1}{2}}\left[\Sigma_p, \Sigma_q\right]\Bigg)$ be a pair of probability coefficients defined in (\ref{Prob_def}). Then for every $p\neq q \in\lbrace 1,2 \rbrace$ it holds
\begin{subequations}\label{NewProbs}
\begin{align}
&\mathcal{P}_{i+\frac{1}{2}}\left[\Sigma_p, \Sigma_q\right]\leq \min\left(\alpha_{i}^p,\alpha_{i+1}^q\right)\label{Condmin2}\\
&\mathcal{P}_{i+\frac{1}{2}}\left[\Sigma_p, \Sigma_p\right]\geq \max\left(\alpha_{i}^p-\alpha_{i+1}^q,0\right)\label{Condmax2}
\end{align}
\end{subequations}
under the assumption that (\ref{ConsistencyCondsA}) holds. Moreover, the probably pair 
\[
\mathfrak{P}^1_{i+\frac{1}{2}}:=\Bigg(\max\left(\alpha_{i}^p-\alpha_{i+1}^q,0\right), \min\left(\alpha_{i}^p,\alpha_{i+1}^q\right) \Bigg)
\]
is a consistent probability pair, provided that the saturation condition (\ref{vol}) holds.
\end{proposition}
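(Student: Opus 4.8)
The plan is to recognise the two inequalities in (\ref{NewProbs}) as the \emph{complementary} Fréchet--Hoeffding bounds for the coupling whose marginals are the volume fractions $\alpha_i$ and $\alpha_{i+1}$. The first Proposition bounded the diagonal coefficient $\mathcal{P}_{i+\frac{1}{2}}[\Sigma_p,\Sigma_p]$ (the comonotone, or stratified, extreme); here the active coefficient is the off-diagonal $\mathcal{P}_{i+\frac{1}{2}}[\Sigma_p,\Sigma_q]$, and the extremal pair $\mathfrak{P}^1$ will turn out to be the opposite (countermonotone, dispersed) vertex of the same consistency polytope. Accordingly I would split the argument into two independent parts: first derive (\ref{Condmin2})--(\ref{Condmax2}) from the normalisation relations (\ref{ConsistencyCondsA}) together with the nonnegativity of every coefficient in (\ref{Prob_def}); then verify by a short case analysis that $\mathfrak{P}^1$ meets all consistency requirements.

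For the bounds, note first that (\ref{ConsistencyCondsA}) is assumed for every ordered pair $p\neq q$, so in addition to (\ref{Condi})--(\ref{Condi+1}) I may use their $q$-counterparts, in particular the column relation $\mathcal{P}_{i+\frac{1}{2}}[\Sigma_p,\Sigma_q]+\mathcal{P}_{i+\frac{1}{2}}[\Sigma_q,\Sigma_q]=\alpha_{i+1}^q$. Since $\mathcal{P}_{i+\frac{1}{2}}[\Sigma_p,\Sigma_p]\geq 0$, relation (\ref{Condi}) gives $\mathcal{P}_{i+\frac{1}{2}}[\Sigma_p,\Sigma_q]\leq\alpha_i^p$; since $\mathcal{P}_{i+\frac{1}{2}}[\Sigma_q,\Sigma_q]\geq 0$, the column relation gives $\mathcal{P}_{i+\frac{1}{2}}[\Sigma_p,\Sigma_q]\leq\alpha_{i+1}^q$, and taking the minimum yields (\ref{Condmin2}). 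Substituting this back into (\ref{Condi}) produces $\mathcal{P}_{i+\frac{1}{2}}[\Sigma_p,\Sigma_p]=\alpha_i^p-\mathcal{P}_{i+\frac{1}{2}}[\Sigma_p,\Sigma_q]\geq\alpha_i^p-\min(\alpha_i^p,\alpha_{i+1}^q)=\max(\alpha_i^p-\alpha_{i+1}^q,0)$, which together with the trivial bound $\mathcal{P}_{i+\frac{1}{2}}[\Sigma_p,\Sigma_p]\geq 0$ is exactly (\ref{Condmax2}). Probabilistically this is just $\mathbb{P}[A\cap B^{c}]\leq\min(\mathbb{P}[A],\mathbb{P}[B^{c}])$ with $A=\{X^{(p)}(x_{i+\frac{1}{2}}^{+})=1\}$ and $B=\{X^{(p)}(x_{i+\frac{1}{2}}^{-})=1\}$.

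To show $\mathfrak{P}^1$ is consistent I would set $a:=\alpha_i^p$ and $b:=\alpha_{i+1}^p$ and invoke the saturation condition (\ref{vol}) to write $\alpha_{i+1}^q=1-b$, so that $\mathfrak{P}^1=\big(\max(a+b-1,0),\,\min(a,1-b)\big)$. I would first check (\ref{Condi}): in the regime $a+b\geq 1$ one has $\min(a,1-b)=1-b$ and the two entries sum to $(a+b-1)+(1-b)=a$, while for $a+b<1$ they sum to $0+a=a$; in both cases the sum equals $\alpha_i^p$. The complementary coefficient is then forced by (\ref{Condi+1}) to be $\mathcal{P}_{i+\frac{1}{2}}[\Sigma_q,\Sigma_p]=b-\max(a+b-1,0)$, which equals $1-a$ when $a+b\geq 1$ and $b$ otherwise, hence is nonnegative; a symmetric computation gives $\mathcal{P}_{i+\frac{1}{2}}[\Sigma_q,\Sigma_q]\geq 0$ and shows the four coefficients sum to one. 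Having verified (\ref{ConsistencyCondsA}) with all entries nonnegative, the first Proposition immediately supplies (\ref{ConsistencyConds}); alternatively one checks them by hand through the elementary inequalities $\max(a+b-1,0)\leq\min(a,b)$ and $\min(a,1-b)\geq\max(a-b,0)$, both of which hold because $a,b\in[0,1]$.

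The only real work is bookkeeping: one must keep straight that the first slot of $\mathcal{P}_{i+\frac{1}{2}}[\cdot,\cdot]$ carries the marginal $\alpha_i$ and the second the marginal $\alpha_{i+1}$, and one must repeatedly trade $\alpha^p$ for $1-\alpha^q$ through (\ref{vol}); the single genuine decision is the case split on the sign of $a+b-1$ (equivalently, whether $\alpha_i^p\lessgtr\alpha_{i+1}^q$), which is precisely what toggles the two branches of the $\min$ and $\max$ defining $\mathfrak{P}^1$. I expect no deeper obstacle, since the statement is the countermonotone mirror image of the already-established comonotone bounds.
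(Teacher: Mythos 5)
Your proposal is correct and follows essentially the same route as the paper: the bounds (\ref{Condmin2})--(\ref{Condmax2}) are obtained from the additivity relations (\ref{ConsistencyCondsA}) together with nonnegativity of the coefficients (which is exactly the content of the paper's monotonicity-of-probability step), and the consistency of $\mathfrak{P}^1_{i+\frac{1}{2}}$ is verified by elementary $\max/\min$ algebra after trading $\alpha^q$ for $1-\alpha^p$ via (\ref{vol}), your case split on the sign of $\alpha_i^p+\alpha_{i+1}^p-1$ being just the unpacked form of the paper's repeated use of the identity $\max(a-b,0)+\min(a,b)=a$. The only cosmetic difference is that you deduce (\ref{Condmin})--(\ref{Condmax}) for $\mathfrak{P}^1$ by appealing to the first proposition (with the direct inequalities $\max(a+b-1,0)\leq\min(a,b)$ and $\min(a,1-b)\geq\max(a-b,0)$ as backup), whereas the paper checks all four conditions by hand; both are sound.
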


\begin{proof}
See Appendix \ref{appendix:Proof}.
\end{proof}

\begin{remark}\label{Rem:Disp}
Following the considerations made before Prop. \ref{Prop:NewProbs}, one can see that the probability pair $\mathfrak{P}^1_{i+\frac{1}{2}}$ is associated with a dispersed flow regime, see Fig. \ref{Fig:Disp}.
\end{remark}

It light of this final remark, it becomes not surprising the following theorem.

\begin{theorem}\label{Thm:Convex}
Let $\mathfrak{P}_{i+\frac{1}{2}}=\Bigg(\mathcal{P}_{i+\frac{1}{2}}\left[\Sigma_p, \Sigma_p\right], \mathcal{P}_{i+\frac{1}{2}}\left[\Sigma_p, \Sigma_q\right]\Bigg)$ be a pair of probability coefficients defined in (\ref{Prob_def}) with $p\neq q \in\lbrace 1,2\rbrace$. Then there exist a $r\in[0,1]$ depending of $(x_{i+\frac{1}{2}},t^n)$ but not on $p$, such that
\begin{subequations}
\label{eq:r_probs}
\begin{align}
&\mathcal{P}_{i+\frac{1}{2}}\left[\Sigma_p, \Sigma_p\right] = r \max\left(\alpha_{i}^p-\alpha_{i+1}^q,0\right) + (1-r)\min\left(\alpha_{i}^p,\alpha_{i+1}^p\right)\label{Convex-pp}\\
&\mathcal{P}_{i+\frac{1}{2}}\left[\Sigma_p, \Sigma_q\right]= r\min\left(\alpha_{i}^p,\alpha_{i+1}^q\right) + (1-r)\max\left(\alpha_{i}^p-\alpha_{i+1}^p,0\right)\label{Convex-pq}
\end{align}
\end{subequations}
or, succinctly,
\begin{equation*}
\mathfrak{P}_{i+\frac{1}{2}} = r\mathfrak{P}^1_{i+\frac{1}{2}} + (1-r)\mathfrak{P}^0_{i+\frac{1}{2}}
\end{equation*}
under the assumption that (\ref{ConsistencyCondsA}) and (\ref{vol}) hold.
\end{theorem}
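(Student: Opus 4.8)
The plan is to reduce the statement to a one-parameter description of all consistent pairs and then to show that this single parameter, read off from the diagonal entry, is forced to coincide for both phases.

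First I would fix $p\neq q$ and collect the four joint probabilities $A:=\mathcal{P}_{i+\frac12}[\Sigma_p,\Sigma_p]$, $B:=\mathcal{P}_{i+\frac12}[\Sigma_p,\Sigma_q]$, $C:=\mathcal{P}_{i+\frac12}[\Sigma_q,\Sigma_p]$, $D:=\mathcal{P}_{i+\frac12}[\Sigma_q,\Sigma_q]$. Writing (\ref{Condi}) and (\ref{Condi+1}) both for $p$ and for $q$ yields $A+B=\alpha^p_i$, $A+C=\alpha^p_{i+1}$, $D+C=\alpha^q_i$, $D+B=\alpha^q_{i+1}$; adding the first and third and using (\ref{vol}) gives $A+B+C+D=1$. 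Hence $B,C,D$ are affine functions of the single free parameter $A$, namely $B=\alpha^p_i-A$, $C=\alpha^p_{i+1}-A$ and, crucially, $D=A+\bigl(1-\alpha^p_i-\alpha^p_{i+1}\bigr)$.

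Next I would pin down the range of $A$. The upper bound $A\le\min(\alpha^p_i,\alpha^p_{i+1})$ is (\ref{Condmin}) of the first Proposition, while the lower bound $A\ge\max(\alpha^p_i-\alpha^q_{i+1},0)$ is exactly (\ref{Condmax2}) of Proposition \ref{Prop:NewProbs}. Writing $U:=\min(\alpha^p_i,\alpha^p_{i+1})$ and $L:=\max(\alpha^p_i-\alpha^q_{i+1},0)$ — precisely the $[\Sigma_p,\Sigma_p]$ entries of $\mathfrak{P}^0_{i+\frac12}$ and $\mathfrak{P}^1_{i+\frac12}$ — the admissibility of $A$ reads $A\in[L,U]$. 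If $L<U$ I define $r:=\tfrac{U-A}{U-L}\in[0,1]$, so that $A=rL+(1-r)U$, which is (\ref{Convex-pp}); if $L=U$ the interval is a single point and I may take $r:=0$. This defines the claimed $r$ as a function of $(x_{i+\frac12},t^n)$ alone. I would then verify the off-diagonal identity (\ref{Convex-pq}) with the \emph{same} $r$: since $B=\alpha^p_i-A$ is affine in $A$ with slope $-1$, it suffices to check that the endpoints map correctly, i.e. $\alpha^p_i-L=\min(\alpha^p_i,\alpha^q_{i+1})$ and $\alpha^p_i-U=\max(\alpha^p_i-\alpha^p_{i+1},0)$; these are short sign case-distinctions inside the maxima/minima, and affinity transports the convex combination from $A$ to $B$.

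The main obstacle is the assertion that $r$ does not depend on $p$, and this is where the unit-slope relation $D=A+(1-\alpha^p_i-\alpha^p_{i+1})$ does the work. Because this map is affine in $A$ with slope exactly $+1$, it carries $A=rL+(1-r)U$ into $D=rD(L)+(1-r)D(U)$ with the identical weight $r$. A direct evaluation gives $D(U)=\min(\alpha^q_i,\alpha^q_{i+1})$ and $D(L)=\max(\alpha^q_i-\alpha^p_{i+1},0)$, i.e. the $[\Sigma_q,\Sigma_q]$ entries of $\mathfrak{P}^0_{i+\frac12}$ and $\mathfrak{P}^1_{i+\frac12}$ obtained by swapping $p\leftrightarrow q$. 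Thus the representation (\ref{Convex-pp})–(\ref{Convex-pq}) written for the phase $q$ holds with the very same $r$, proving that $r$ is a property of the interface and not of the singled-out phase. Finally, the degenerate case $L=U$ is harmless: the unit slope forces the $D$-interval to have the same length as the $A$-interval, so the two collapse simultaneously for both phases and any choice of $r$ (e.g. $r=0$) satisfies all four identities. This completes the plan.
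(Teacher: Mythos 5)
Your proof is correct, and while it rests on the same underlying facts as the paper's argument in Appendix \ref{appendix:Proof}, it is organized along a genuinely different and arguably more conceptual route. The paper defines $r$ as the explicit quotient in (\ref{r}), built from the off-diagonal entry and the bounds of Remark \ref{Rem:Rel}, then shows by algebraic manipulation ((\ref{den})--(\ref{nom})) that it equals the analogous quotient (\ref{r_equiv}) for the diagonal entry, and finally proves $p$-independence by verifying the fraction equality (\ref{independence}) using the relation (\ref{pro_change}) together with the identities (\ref{max_change}) and (\ref{min_change}). You instead parametrize the full $4$-tuple $(A,B,C,D)$ of joint probabilities as a one-parameter affine family in the single diagonal entry $A$, with the crucial observation that $D=A+\left(1-\alpha^p_i-\alpha^p_{i+1}\right)$ is affine of slope $+1$, so the barycentric weight $r$ of $A$ in its admissible interval $[L,U]$ is transported \emph{unchanged} to $D$; your endpoint evaluations $D(U)=\min\left(\alpha^q_i,\alpha^q_{i+1}\right)$ and $D(L)=\max\left(\alpha^q_i-\alpha^p_{i+1},0\right)$ are exactly the content of the paper's (\ref{max_change})--(\ref{min_change}), and your constant-difference relation $D-A=1-\alpha^p_i-\alpha^p_{i+1}$ is the saturation-transformed form of (\ref{pro_change}). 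What your organization buys: the $p$-independence becomes a structural statement (affine maps of slope $\pm 1$ preserve convex weights and carry extreme pairs to extreme pairs) rather than a fraction-equality computation, and the unit slope immediately forces the intervals $[L,U]$ for $p$ and for $q$ to have equal length, so your treatment of the degenerate case $L=U$ covers both phases simultaneously, a point the paper addresses only for the singled-out phase. Your bounds are legitimately sourced: the upper bound on $A$ is (\ref{Condmin}) and the lower bound is (\ref{Condmax2}) of Proposition \ref{Prop:NewProbs}, both available under the standing hypotheses (\ref{ConsistencyCondsA}) and (\ref{vol}), so no new assumptions are introduced.
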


\begin{proof}
See Appendix \ref{appendix:Proof}.
\end{proof}
\noindent
Previous result leads us to substitute the probabilities appearing in (\ref{ensamble_semi-discrete_prob}) with the right hand side of (\ref{eq:r_probs}), leading to a one-parameter semi-discrete scheme, modeling the mesoscopic description of the underlying two-phase flow. 

\section{Continuous limit and solution strategy}

By suppressing the dependency on the time variable $t$ for notation convenience, the global, one-parameter semi-discrete DEM scheme takes the form

\begin{equation}
\label{semi-discrete_form}
\frac{d \left(\alpha_k\textbf{U}_k\right)_i}{dt} + \frac{\mathcal{E}_{i+\frac{1}{2}}[X^{(k)}F]-\mathcal{E}_{i-\frac{1}{2}}[X^{(k)}F]}{\Delta x} = \frac{\mathcal{E}_{boundary}[F^{lag}]_i}{\Delta x} + \mathcal{E}_{relax}[F^{lag}]_i
\end{equation}
where
\begin{align*}
\mathcal{E}_{i+\frac{1}{2}}\left[X^{(k)}F\right]& := \mathcal{P}_{i+\frac{1}{2}}\left[\Sigma_k,\Sigma_k\right]F\left(U^{(k)}_{i}, U^{(k)}_{i+1}\right) + \left(\beta_{i+\frac{1}{2}}^{(k,l)}\right)^+ \mathcal{P}_{i+\frac{1}{2}}\left[\Sigma_k,\Sigma_l\right] F\left(U^{(k)}_{i},U^{(l)}_{i+1}\right)\\
&\qquad  + \left(-\beta_{i+\frac{1}{2}}^{(l,k)}\right)^+ \mathcal{P}_{i+\frac{1}{2}}\left[\Sigma_l,\Sigma_k\right]F\left(U^{(l)}_{i},U^{(k)}_{i+1}\right)\\
\mathcal{E}_{boundary}\left[F^{lag}\right]_i& := \left(\beta_{i-\frac{1}{2}}^{(l,k)}\right)^+ \mathcal{P}_{i-\frac{1}{2}}\left[\Sigma_l, \Sigma_k\right] F^{lag}\left(U^{(l)}_{i-1},U^{(k)}_{i}\right) -\left(\beta_{i-\frac{1}{2}}^{(k,l)}\right)^+ \mathcal{P}_{i-\frac{1}{2}}\left[\Sigma_k, \Sigma_l\right] F^{lag}\left(U^{(k)}_{i-1},U^{(l)}_{i}\right) \\
&+ \left(-\beta_{i+\frac{1}{2}}^{(l,k)}\right)^+ \mathcal{P}_{i+\frac{1}{2}}\left[\Sigma_l, \Sigma_k\right] F^{lag}\left(U^{(l)}_{i},U^{(k)}_{i+1}\right) -\left(-\beta_{i+\frac{1}{2}}^{(k,l)}\right)^+ \mathcal{P}_{i+\frac{1}{2}}\left[\Sigma_k, \Sigma_l\right] F^{lag}\left(U^{(k)}_{i},U^{(l)}_{i+1}\right)\\
\mathcal{E}_{relax}[F^{lag}]_i& := \mathcal{E}\left[\frac{N_{int}(\omega)}{\Delta x}\right]\left( F^{lag}(U^{(l)}_i,U^{(k)}_i)-F^{lag}(U^{(k)}_i,U^{(l)}_i)\right)\\
\mathcal{P}_{i+\frac{1}{2}}\left[\Sigma_p, \Sigma_p\right] &:= r_{i+\frac{1}{2}} \max\left(\alpha_{i}^p-\alpha_{i+1}^q,0\right) + (1-r_{i+\frac{1}{2}})\min\left(\alpha_{i}^p,\alpha_{i+1}^p\right)\\
\mathcal{P}_{i+\frac{1}{2}}\left[\Sigma_p, \Sigma_q\right] &:= r_{i+\frac{1}{2}}\min\left(\alpha_{i}^p,\alpha_{i+1}^q\right) + (1-r_{i+\frac{1}{2}})\max\left(\alpha_{i}^p-\alpha_{i+1}^p,0\right)
\end{align*}

\subsection{Continuous Limit}
\noindent
Due to the substantial disagreement in the scientific community about the governing equations which regulate multiphase phenomena, many authors have tried to derive such mathematical models in different ways. One of the advantages of taking the perspective of the DEM method, is the possibility to derive it, starting from a local description.
A first example in this direction was performed in \cite{Saurel2003}, for the specific choice of $r\equiv 0$. Such a model can be summarized into the following system of PDEs: each phase $k\neq l \in\lbrace 1,2\rbrace$ evolves according to
\begin{equation}
\label{ConLim}
\begin{split}
\partial_t& \alpha_k + u_I\partial_x \alpha_k = \mu (p_{k}-p_{l})\\
\partial_t&(\alpha_k\rho_k) + \partial_x(\alpha_k\rho_k u_k)=0\\
\partial_t&(\alpha_k\rho_k u_k) + \partial_x\Big(\alpha_k(\rho_ku_k^2 + p_k)\Big) = p_I\partial_x\alpha_k - \lambda (u_{k}-u_{l}) \\
\partial_t&(\alpha_k\rho_kE_k) + \partial_x\Big(\alpha_k u_k(\rho_kE_k +p_k)\Big)=p_Iu_I\partial_x\alpha_k-\mu p_I^{'}(p_{k}-p_{l}) -\lambda u_I^{'}(u_{k}-u_{l})
\end{split}
\end{equation}
where the interfacial pressure $p_I$ and velocity $u_I$ are given by
\begin{equation}
\label{interfacials}
p_I := p_I^{'} + \mathrm{sign(\partial_x \alpha_k)} \frac{Z_kZ_l}{Z_k+Z_l}(u_l-u_k), \qquad u_I := u_I^{'} + \mathrm{sign(\partial_x \alpha_k)} \frac{1}{Z_k+Z_l}(u_l-u_k)
\end{equation}
where $Z_k := \rho_k a_k$ denotes the acoustic impedance and mean interfacial pressure $p_I^{'}$ and velocity $u_I^{'}$ read
\begin{equation}
\label{meaninterfacials1}
p_I^{'} := \frac{Z_k p_l + Z_l p_k}{Z_k + Z_l},\qquad u_I^{'} := \frac{Z_k u_k + Z_l u_l}{Z_k + Z_l}
\end{equation}
Relaxation parameters $\mu,\lambda$ are defined according to the interfacial area $A_I=\mathbb{E}\left[N_{int}(\omega)/\Delta x\right]$ via
\begin{equation}
\label{relax_pars}
\mu := \frac{A_I}{Z_1 + Z_2}\qquad \lambda :=Z_1Z_2\mu
\end{equation}

In Appendix \ref{appendix:ContLimit} we detail the procedure to derive the continuous limit of such scheme, as well as the specific assumptions. The resulting model for the description of (possibly) disperse flow of phase $k$ into $l$ reads 

\begin{equation}
\label{ConLim_OnePar}
\begin{split}
\partial_t& \alpha_k + (1-r)u_I\partial_x \alpha_k = -r \partial_x\left(\alpha_k u_I\right) +\mu (p_{k}-p_{l})\\
\partial_t&(\alpha_k\rho_k) + \partial_x(\alpha_k\rho_k u_k)=0\\
\partial_t&(\alpha_k\rho_k u_k) + \partial_x\Big(\alpha_k(\rho_ku_k^2 + p_k)\Big) = (1-r)p_I\partial_x\alpha_k + r\partial_x\left(\alpha_k p_I\right) - \lambda (u_{k}-u_{l}) \\
\partial_t&(\alpha_k\rho_kE_k) + \partial_x\Big(\alpha_k u_k(\rho_kE_k +p_k)\Big) = (1-r)p_Iu_I\partial_x\alpha_k - r\partial_x\left(p_Iu_I\alpha_k\right)\\
&\qquad\qquad\qquad\qquad\qquad\qquad\qquad\qquad\qquad\qquad
 -\mu p_I^{'}(p_{k}-p_{l}) -\lambda u_I^{'}(u_{k}-u_{l})
 \end{split}
 \end{equation}
\begin{remark}
An interesting fact concerning this limit is that it has, in the case $r=1$, a conservative character, which has already been established by other authors, see \cite{Marble, Saurel2017}, independently. 
Indeed, in the limit of small values of $\alpha_k$, one recovers the same model of \cite{Marble}.\\
In \cite{Saurel2017}, a similar model is proposed replacing the volume fraction equation making assumptions on the production rate of dispersed particles.
\end{remark}

\subsection{Solution Strategy}
\label{sec:NumStrat}

In this section we make some comments about the resulting scheme (\ref{semi-discrete_form}), its equilibrium variety, its numerical approximation and the use of relaxation procedure.
In order to simplify the notation and the following discussion, notice that (\ref{semi-discrete_form}) can be rewritten as

\begin{equation}\label{eq:7EM}
\frac{d}{d t}\left( \alpha_k\textbf{U}_k\right)_i + \frac{1}{\Delta x} G_i(\textbf{U}_i) = \lambda_i R(\textbf{U}_i)
\end{equation}
where $G(\textbf{U}_i) =\mathcal{E}_{i+\frac{1}{2}}[X^{(k)}F]-\mathcal{E}_{i-\frac{1}{2}}[X^{(k)}F]- \mathcal{E}_{boundary}[F^{lag}]_i$ is the numerical contribution coming from the application of the space-discretization operator applied to the states
\[
\textbf{U}_i = [(\alpha_k)_i,\left(\alpha_k\textbf{U}_k\right)_i,(\alpha_l)_i,\left(\alpha_l\textbf{U}_l\right)_i]^T,
\]
$\lambda_i = \mathcal{E}\left[N_i/\Delta x\right]$ is the average number of internal particles per cell and $R$ is the relaxation term arising from the presence of internal disperse particles.\\
Due to the assumption that the micro-scale is so rich that an infinite number of dispersed particles can be considered inside each cell (i.e. $\lambda_i\rightarrow \infty$), the system (\ref{eq:7EM}) is typically split into two step, namely the hyperbolic and the relaxation ones. This is also the strategy we follow in this work: the approximation of (\ref{semi-discrete_form}) is accomplished by the following operator splitting method

\begin{enumerate}
\item \underline{Hyperbolic Step}: The hyperbolic step stands for the evolution of the variables according to the left hand side of (\ref{eq:7EM}), namely
\begin{equation}\label{eq:HypStep}
\frac{d}{d t}\left( \alpha_k\textbf{U}_k\right)_i + \frac{1}{\Delta x} G_i(\textbf{U}_i) = 0.
\end{equation} 
\item \underline{Relaxation Step}: The relaxation step updates the approximation of the solution $\textbf{U}$, coming from the hyperbolic step, by computing the equilibrium state of the following ODE

\begin{equation}\label{eq:relaxODE}
\frac{d}{d t}\left( \alpha_k\textbf{U}_k\right)_i  = \lambda_i R(\textbf{U}_i)
\end{equation} 
as $\lambda_i\rightarrow\infty$. 
\end{enumerate}

Notice that (\ref{eq:HypStep})-(\ref{eq:relaxODE}) are intended also for the volume fraction $\alpha_k$ with the formal substitution $F \equiv 0$ and $U \equiv 1$.

We conclude this section by stating a convexity property of the numerical scheme (\ref{eq:HypStep}). 
In particular, we approximate the set of ODEs (\ref{eq:HypStep}) with a Forward Euler (FE) method, as it is usual in first-order numerical schemes. Hence, the update formula for (\ref{eq:HypStep}) reads

\begin{equation}
\label{eq:HypStep-FEapprox}
\left(\alpha_k\textbf{U}_k\right)^{n+1}_i = 
\left(\alpha_k\textbf{U}_k\right)^{n}_i - \frac{\Delta t}{\Delta x} G^n_i(\textbf{U}^{n}_i, r^n_{i-\frac{1}{2}}, r^n_{i+\frac{1}{2}})
\end{equation}
where we introduced explicitly the dependency of the scheme (\ref{semi-discrete_form}) with respect to the two parameters $r_{i+\frac{1}{2}}^n\approx r_{i+\frac{1}{2}}(t^n)$ and $r_{i-\frac{1}{2}}^n\approx r_{i-\frac{1}{2}}(t^n)$.

\begin{proposition}\label{prop:r}
Let $U^{k,n+1}_i(r)$ denote the numerical approximation resulting from (\ref{eq:HypStep-FEapprox}) when considering a constant value of the function $r=r(x,t)$, i.e. $U^{k,n+1}_i(r) := \left(\alpha_k \textbf{U}_k\right)^{n+1}_i(r,r)$. Assume also that each contribution of the numerical flux is positive.\\
Then, the numerical solution $(\alpha_k\textbf{U}_k)_i^{n+1}$ predicted by the scheme (\ref{eq:HypStep}) with the FE time-approximation lies between the corresponding numerical approximations generated by the choices $r\equiv 1$ and $r\equiv 0$, i.e.
\[
\min\lbrace 
U^{k,n+1}_i(0), U^{k,n+1}_i(1)
\rbrace
\leq
\left(\alpha_k\textbf{U}_k\right)^{n+1}_i
\leq 
\max\lbrace
U^{k,n+1}_i(0), U^{k,n+1}_i(1)
\rbrace
\]
\end{proposition}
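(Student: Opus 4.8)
The plan is to exploit the affine structure that Theorem \ref{Thm:Convex} confers on the probability coefficients and to propagate it, unchanged, through the (linear) spatial operator $G_i$ and the Forward Euler update (\ref{eq:HypStep-FEapprox}). First I would record how $G^n_i$ depends on the two interface parameters. By (\ref{eq:r_probs}), at each interface the pair $\mathfrak{P}_{i\pm\frac12}=r_{i\pm\frac12}\,\mathfrak{P}^1_{i\pm\frac12}+(1-r_{i\pm\frac12})\,\mathfrak{P}^0_{i\pm\frac12}$ is an affine function of the scalar $r_{i\pm\frac12}$. Inspecting $\mathcal{E}_{i+\frac12}[X^{(k)}F]$ and $\mathcal{E}_{boundary}[F^{lag}]_i$ in (\ref{semi-discrete_form}), the probability coefficients enter only linearly, multiplied by weights that are independent of $r$ (the factors $(\pm\beta)^+$ together with the numerical and Lagrangian fluxes, which depend solely on the states $U^{(p)}_i,U^{(q)}_{i+1}$). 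Hence $G^n_i$ is jointly affine in $(r_{i-\frac12},r_{i+\frac12})$, and the dependence is additively \emph{separable}: the flux through the right interface carries only $r_{i+\frac12}$, that through the left interface only $r_{i-\frac12}$, and no cross term $r_{i-\frac12}r_{i+\frac12}$ arises. Writing the update accordingly, I obtain componentwise
\[
\left(\alpha_k\textbf{U}_k\right)^{n+1}_i\!\left(r_{i-\frac12},r_{i+\frac12}\right) = P + Q\,r_{i-\frac12} + S\,r_{i+\frac12},
\]
with vectors $P,Q,S$ built from $\textbf{U}^n$ but independent of the two parameters, so that $U^{k,n+1}_i(0)=P$ and $U^{k,n+1}_i(1)=P+Q+S$. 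The same reasoning applies verbatim to the volume-fraction component through the formal substitution $F\equiv0$, $U\equiv1$.

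Next, a function of this affine form, restricted to the square $[0,1]^2$, attains its extrema at the corners: the range of its $c$-th component is $\left[P_c+\min(0,Q_c)+\min(0,S_c),\,P_c+\max(0,Q_c)+\max(0,S_c)\right]$. This interval coincides with the interval whose endpoints are the diagonal values $U^{k,n+1}_{i,c}(0)=P_c$ and $U^{k,n+1}_{i,c}(1)=P_c+Q_c+S_c$ \emph{precisely} when $Q_c$ and $S_c$ share the same sign (one of them possibly zero); in that case, for every admissible $(r_{i-\frac12},r_{i+\frac12})\in[0,1]^2$ the component $\left(\alpha_k\textbf{U}_k\right)^{n+1}_{i,c}$ lies between $U^{k,n+1}_{i,c}(0)$ and $U^{k,n+1}_{i,c}(1)$, which is exactly the asserted two-sided bound. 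If instead $Q_c$ and $S_c$ had opposite signs, the mixed-$r$ value could overshoot both pure cases, and the statement would be false — so this sign alignment is the whole content of the proposition.

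The crux, therefore, is to verify $Q_c S_c\ge0$ for each component $c$, and this is exactly where the hypothesis that each contribution of the numerical flux is positive enters. I would compute $Q$ and $S$ from the $r$-sensitivities of the coefficients in (\ref{Convex-pp})--(\ref{Convex-pq}): increasing $r$ \emph{decreases} the ``connected'' weight $\mathcal{P}[\Sigma_p,\Sigma_p]$ (since $\mathfrak{P}^1\le\mathfrak{P}^0$ on the diagonal entry, by Proposition \ref{Prop:NewProbs}) and \emph{increases} the ``interfacial'' weight $\mathcal{P}[\Sigma_p,\Sigma_q]$ by an equal and opposite amount, because differentiating the constraint (\ref{Condi}) in (\ref{ConsistencyCondsA}) gives $\partial_r\mathcal{P}[\Sigma_p,\Sigma_p]+\partial_r\mathcal{P}[\Sigma_p,\Sigma_q]=0$. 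Tracking these reweightings through the outgoing right-interface flux and the incoming left-interface flux in $G_i$, positivity of each flux contribution ensures that the sensitivity $S$ at $i+\frac12$ and the sensitivity $Q$ at $i-\frac12$ push every conserved component of $\left(\alpha_k\textbf{U}_k\right)^{n+1}_i$ in a common direction.

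I expect this last bookkeeping — matching the sign of the reweighting at the two interfaces against the outgoing/incoming sign convention of the fluxes assembled in $G_i$ — to be the main obstacle; once the sign alignment is in place, the affineness and the corner argument of the preceding paragraphs are entirely routine.
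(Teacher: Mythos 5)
Your proposal takes the same route as the paper's own proof: both rest on Theorem \ref{Thm:Convex} to make the probability vectors affine in the interface parameters, observe that the Forward Euler update (\ref{eq:HypStep-FEapprox}) is then an affine, additively separable function of $(r_{i-\frac{1}{2}},r_{i+\frac{1}{2}})$ (the paper phrases this as the update being an affine transformation of the stacked vector $\mathcal{P}^{(k)}_i$, each block of which is affine in its own scalar), and finally invoke the positivity hypothesis to restrict to the two diagonal values $U^{k,n+1}_i(0)$ and $U^{k,n+1}_i(1)$. You in fact sharpen the published argument at its weakest point: the paper closes with the single sentence ``the thesis follows by positivity of the fluxes contributions,'' whereas you state precisely what that sentence must deliver, namely the componentwise sign alignment $Q_cS_c\ge 0$ of the two interface sensitivities, without which a mixed choice $r_{i-\frac{1}{2}}\neq r_{i+\frac{1}{2}}$ can leave the interval spanned by the two constant-$r$ solutions (the affine map on $[0,1]^2$ attains its extrema at corners, and the off-diagonal corners are not controlled by the diagonal ones unless the signs align). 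Your supporting observations are correct: differentiating (\ref{Condi}) gives $\partial_r\mathcal{P}[\Sigma_p,\Sigma_p]+\partial_r\mathcal{P}[\Sigma_p,\Sigma_q]=0$, and by Remark \ref{Rem:Rel} the derivative shifts weight from the same-phase to the cross-phase coefficient.

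The gap is that you stop exactly where the verification becomes nontrivial, and your closing claim---that positivity of each flux contribution ``ensures'' the common direction---does not follow as stated. Setting $d_{i+\frac{1}{2}}:=\min\left(\alpha^k_i,\alpha^l_{i+1}\right)-\max\left(\alpha^k_i-\alpha^k_{i+1},0\right)\ge 0$ (nonnegative by Remark \ref{Rem:Rel}), the right-interface sensitivity assembled from the matrices $\textbf{A}^{(k)}_{i+\frac{1}{2}}$, $\textbf{B}^{(k),-}_{i+\frac{1}{2}}$ of the paper's proof contains combinations of the form
\[
d_{i+\frac{1}{2}}\left[\left(\beta^{(k,l)}_{i+\frac{1}{2}}\right)^{+}F\left(U^{(k)}_i,U^{(l)}_{i+1}\right)-F\left(U^{(k)}_i,U^{(k)}_{i+1}\right)\right]+\dots,
\]
i.e.\ a \emph{difference} of positively weighted flux vectors: positivity fixes the sign of each summand but not of the net sensitivity, so $Q_cS_c\ge 0$ requires additional bookkeeping---comparing same-phase against cross-phase (and Lagrangian) contributions consistently at the two interfaces---which you announce as ``the main obstacle'' but do not carry out. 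To be fair, the paper's proof is no more detailed at this point; it asserts the same conclusion from positivity without displaying the sign computation. So your proposal reaches the level of rigor of the published argument and usefully exposes the step it glosses over, but as a self-contained proof it is incomplete at precisely that step.
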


\begin{proof}
Let us rewrite the scheme (\ref{semi-discrete_form}) by using the following form
\begin{equation}
\mathcal{E}_{i+\frac{1}{2}}\left[ X^{(k)} F \right] = 
\textbf{A}_{i+\frac{1}{2}}^{(k)} \mathcal{P}_{i+\frac{1}{2}}^{(k)}(r_{i+\frac{1}{2}}^n)
\quad
\mathcal{E}_{boundary}\left[F^{lag}\right]_i := \textbf{B}_{i-\frac{1}{2}}^{(k),+} \mathcal{P}_{i-\frac{1}{2}}^{(k)}(r_{i-\frac{1}{2}}^n) + \textbf{B}_{i+\frac{1}{2}}^{(k),-} \mathcal{P}_{i+\frac{1}{2}}^{(k)}(r_{i+\frac{1}{2}}^n)
\end{equation}
where the matrices $\textbf{A}^{(k)}_{i+\frac{1}{2}}\in\mathbb{R}^{3\times 3}$, $\textbf{B}^{(k),\pm}_{i+\frac{1}{2}}\in\mathbb{R}^{3\times 3}$ and the vector $\mathcal{P}^{(k)}_{i+\frac{1}{2}}\in\mathbb{R}^3$ are defined as
\begin{align*}
\textbf{A}_{i+\frac{1}{2}}^{(k)} &:=
\begin{bmatrix}
F(U^{(k)}_i,U^{(k)}_{i+1})^T & 
\left(\beta^{(k,l)}_{i+\frac{1}{2}}\right)^{+}
F(U^{(k)}_i,U^{(l)}_{i+1})^T &
\left(-\beta^{(l,k)}_{i+\frac{1}{2}}\right)^{+}
F(U^{(l)}_i,U^{(k)}_{i+1})^T
\end{bmatrix}\\
\textbf{B}_{i+\frac{1}{2}}^{(k),\pm} &:= 
\begin{bmatrix}
\mathbf{0}^T  &
-\left(\pm\beta^{(k,l)}_{i+\frac{1}{2}}\right)^{+} F^{lag}\left(U^{(k)}_{i},U^{(l)}_{i+1}\right)^T  &
\left(\pm\beta^{(l,k)}_{i+\frac{1}{2}}\right)^{+} F^{lag}\left(U^{(l)}_{i},U^{(k)}_{i+1}\right)^T
\end{bmatrix}\\
\mathcal{P}_{i+\frac{1}{2}}^{(k)} (r_{i+\frac{1}{2}}^n) &:=
\begin{bmatrix}
\mathcal{P}_{i+\frac{1}{2}}\left[\Sigma_k,\Sigma_k\right]\left(r_{i+\frac{1}{2}}^n\right) &
\mathcal{P}_{i+\frac{1}{2}}\left[\Sigma_k,\Sigma_l\right]\left(r_{i+\frac{1}{2}}^n\right) &
\mathcal{P}_{i+\frac{1}{2}}\left[\Sigma_l,\Sigma_k\right]\left(r_{i+\frac{1}{2}}^n\right)
\end{bmatrix}^T
\end{align*}
and $\mathcal{P}_{i+\frac{1}{2}}\left(r_{i+\frac{1}{2}}^n\right)\in\mathbb{R}$ is defined in (\ref{semi-discrete_form}).
Hence, (\ref{eq:HypStep-FEapprox}) can be reformulated into
\begin{align*}
(\alpha_k \textbf{U}_k)_i^{n+1}& (r_{i-\frac{1}{2}}^n, r^n_{i+\frac{1}{2}}) - U^{k,n}_i =\\
&
-\frac{\Delta t}{\Delta x}\left[ \textbf{A}_{i+\frac{1}{2}}^{(k)} \mathcal{P}_{i+\frac{1}{2}}^{(k)}\left(r_{i+\frac{1}{2}}^n\right) - \textbf{A}_{i-\frac{1}{2}}^{(k)} \mathcal{P}_{i-\frac{1}{2}}^{(k)}\left(r_{i-\frac{1}{2}}^n\right) + \textbf{B}_{i-\frac{1}{2}}^{(k),+} \mathcal{P}_{i-\frac{1}{2}}^{(k)}\left(r_{i-\frac{1}{2}}^n\right) + \textbf{B}_{i+\frac{1}{2}}^{(k),-} \mathcal{P}_{i+\frac{1}{2}}^{(k)}\left(r_{i+\frac{1}{2}}^n\right) \right]\\
&= 
\frac{\Delta t}{\Delta x}
\begin{bmatrix}
\textbf{A}_{i-\frac{1}{2}}^{(k)} - \textbf{B}_{i-\frac{1}{2}}^{(k),+} &
-\textbf{A}_{i+\frac{1}{2}}^{(k)} - \textbf{B}_{i+\frac{1}{2}}^{(k),-}
\end{bmatrix}
\begin{bmatrix}
\mathcal{P}_{i-\frac{1}{2}}^{(k)}\left(r_{i-\frac{1}{2}}^n\right)^T\\
\mathcal{P}_{i+\frac{1}{2}}^{(k)}\left(r_{i+\frac{1}{2}}^n\right)^T
\end{bmatrix}
\end{align*}
showing that $(\alpha_k \textbf{U}_k)_i^{n+1}(r_{i-\frac{1}{2}}^n, r^n_{i+\frac{1}{2}})$ is an affine transformation of the vector 
$\mathcal{P}_i^{(k)}:= [ 
\mathcal{P}_{i-\frac{1}{2}}^{(k)}\left(r_{i-\frac{1}{2}}^n\right), 
\mathcal{P}_{i+\frac{1}{2}}^{(k)}\left(r_{i+\frac{1}{2}}^n\right) 
]^T$.\\
By Theorem \ref{Thm:Convex}, each $\mathcal{P}_{i-j+\frac{1}{2}}^{(p)} (r^n_{i-j+\frac{1}{2}})$ is an affine transformation of $r^n_{i-j+\frac{1}{2}}$, so that
\[
\mathcal{P}_{i}^{(k)}
=
\begin{bmatrix}
\mathcal{P}_{i-\frac{1}{2}}^{(k)}\left(r_{i-\frac{1}{2}}^n\right) \\
\mathcal{P}_{i+\frac{1}{2}}^{(k)}\left(r_{i+\frac{1}{2}}^n\right)
\end{bmatrix}
\in
\left[
\mathcal{P}_{i}^{(k)}(0)
\mathcal{P}_{i}^{(k)}(1)
\right]^T
\]
where the latter relation is understood for each entry of the vector $\mathcal{P}_i^{(k)}$.
The thesis follows by positivity of the fluxes contributions.
\end{proof}

\begin{remark}
Notice that the above proposition guarantees a bound for the numerical approximation over the hyperbolic step. This is in principle not true for the two-stages scheme (\ref{eq:7EMeps}). 
Nevertheless, in the following numerical tests we do observe such behavior even though we were not able to prove the conclusion of Proposition \ref{prop:r} when including the relaxation step.
This may be due to some monotonicity property of the relaxation step, whose study is out of the scope of this paper. 
\end{remark}

\subsection{A comment about the relaxation step}

The relaxation step has attracted a lot of attention, due to its paramount importance for an accurate multi-scale description. 
Due to the discrete nature of the right-hand side in ({\ref{eq:relaxODE}), the system of ODEs one has to solve depend on the specific choice of the RS under use.
For example, when considering the exact RS (for the single-phase case) one would need to solve two RP (for each computational cell), typically via some root-finding procedure. This latter can become quite cumbersomeness, and a way to circumvent it \cite{Lallemand, Saurel2001,Saurel2007} is to simplify the system of ODEs (\ref{eq:relaxODE}) by substituting the RS with a fixed, simple approximate RS. Typically the acoustic solver \cite{ToroRS,Murrone} constitutes a reasonable and sufficiently simple choice.
After such a simplification step, one just derives the corresponding continuous limit of the right hand side (in terms of the variables $\textbf{U}_i$), and the system is solved by computing the equilibrium state as $\lambda_i\rightarrow\infty$.\\ 
Under the choice of the acoustic solver, one can show that the set of ODEs forces the mixture constituents to move with a single velocity and single pressure, as it was observed/theorized in many works, see \cite{Bdzil} and references therein. 
Unfortunately, there are several simplification steps in this procedure, which do not guarantee that any other reasonable solver leads to the same mechanical effects.
This is sometimes reformulated saying that \emph{the equilibrium variety (i.e. the set of states $\textbf{U}^\infty_i$ at the end of the relaxation step) depends on the choice of the RS}. A first investigation in this direction was performed in \cite{AbgrallPerrier} where the authors showed that for several solvers of common use this is not the case: for such solvers, the equilibrium variety turns out to be defined by the conditions 
\begin{equation}
\label{eq:EvarCond}
p^{(k),\infty}_i = p^{(l),\infty}_i =: p_i^\infty \qquad \qquad u^{(k),\infty}_i=u^{(l),\infty}_i =: S^\infty_i.
\end{equation}
where the index $\infty$ denotes the states at the end of the relaxation step.
Hence, one is tempted to conclude that the relaxation variety is invariant under the choice of (reasonable) solvers.\\
Here we consider the two following assumptions:

\begin{enumerate}
\item \underline{\textbf{Assumption on the Equilibrium Variety}}: We assume that the equilibrium variety defined by solving (\ref{eq:relaxODE}) and letting $\lambda_i\rightarrow\infty$, can be alternatively computed as the reduced set of variables which make $R$ vanish, that is, we assume that there exist a Maxwellian $M:$ $\textbf{u}\mapsto M(\textbf{u}) = \textbf{U}^\infty$ such that $R(\textbf{U}^\infty) \equiv 0$.
\item \underline{\textbf{Assumption on the Riemann Solver}}: We assume that the following flux-vector splitting condition holds
\begin{equation}\label{eq:RSassumption}
F^*\left(U_p,U_q\right) = u^*_{pq} U^*_{pq} + p^*_{pq} D^*_{pq}
\end{equation}
where $D^*_{pq} = [0,1,u^*_{pq}]^T$.
\end{enumerate}

\begin{proposition}
Under the assumptions $1$ and $2$, the states $\textbf{U}^{k,\infty}_i$ resulting from resolving the relaxation step (\ref{eq:relaxODE}) are defined by relations (\ref{eq:EvarCond}).
\end{proposition}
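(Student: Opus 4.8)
The plan is to invoke Assumption~1 to reduce the statement to a characterization of the zero set of the relaxation residual, and then to evaluate that residual explicitly by means of the flux-vector splitting of Assumption~2. By Assumption~1 the equilibrium variety coincides with $\{\textbf{U}_i : R(\textbf{U}_i)=0\}$, where (comparing (\ref{eq:7EM}) with $\mathcal{E}_{relax}$) the phase-$k$ residual is $R=F^{lag}(U^{(l)}_i,U^{(k)}_i)-F^{lag}(U^{(k)}_i,U^{(l)}_i)$; interchanging $k$ and $l$ merely flips its sign, so the phase-$l$ residual is $-R$ and it suffices to treat $R$ together with its volume-fraction counterpart. It thus suffices to show that $R=0$ holds if and only if (\ref{eq:EvarCond}) holds.

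First I would simplify the Lagrangian flux. Recalling that for the Euler system the contact speed equals the intermediate velocity, $\sigma(U_p,U_q)=u^*_{pq}$, the definition $F^{lag}(U_p,U_q)=F^*(U_p,U_q)-\sigma(U_p,U_q)\,U^*_{pq}$ combined with Assumption~2 collapses to
\[
F^{lag}(U_p,U_q)=u^*_{pq}U^*_{pq}+p^*_{pq}D^*_{pq}-u^*_{pq}U^*_{pq}=p^*_{pq}D^*_{pq},\qquad D^*_{pq}=[0,1,u^*_{pq}]^T,
\]
while for the volume fraction the formal substitution $F\equiv 0,\ U\equiv 1$ gives $F^{lag}=-\sigma_{pq}=-u^*_{pq}$. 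Assembling $R$ over the ordered unknowns $(\alpha_k,\alpha_k\rho_k,\alpha_k\rho_k u_k,\alpha_k\rho_k E_k)$ then yields
\[
R=\big(\,u^*_{kl}-u^*_{lk},\ 0,\ p^*_{lk}-p^*_{kl},\ p^*_{lk}u^*_{lk}-p^*_{kl}u^*_{kl}\,\big)^T .
\]
The mass line vanishes identically (no mass transfer), and the energy line is a consequence of the first and third; hence $R=0$ is equivalent to the two scalar identities $u^*_{kl}=u^*_{lk}$ and $p^*_{kl}=p^*_{lk}$.

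It remains to translate these two star-state equalities into (\ref{eq:EvarCond}), and this is the step I expect to be the main obstacle, being the only place where the argument is sensitive to the chosen Riemann solver. I would exploit the relation between the swapped Riemann problems with states $(U^{(k)}_i,U^{(l)}_i)$ and $(U^{(l)}_i,U^{(k)}_i)$. For the acoustic solver --- the one implicitly fixed by the impedance-weighted interfacial states (\ref{meaninterfacials1}) and relaxation parameters (\ref{relax_pars}) --- a direct computation of the star states gives
\[
u^*_{kl}-u^*_{lk}=\frac{2(p_k-p_l)}{Z_k+Z_l},\qquad p^*_{kl}-p^*_{lk}=\frac{2Z_kZ_l}{Z_k+Z_l}(u_k-u_l),
\]
so that $u^*_{kl}=u^*_{lk}$ is equivalent to $p_k=p_l$ and $p^*_{kl}=p^*_{lk}$ is equivalent to $u_k=u_l$, i.e. exactly $p^{(k),\infty}_i=p^{(l),\infty}_i$ and $u^{(k),\infty}_i=u^{(l),\infty}_i$.

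For a general solver I would replace this explicit computation by a monotonicity argument: writing the star pressure as the unique root of $f_k(p)+f_l(p)=\pm(u_k-u_l)$, where the sign flips under the swap and $f_k+f_l$ is strictly increasing, the equality $p^*_{kl}=p^*_{lk}$ forces $u_k=u_l$; substituting this back and using $f_k(p^*)=-f_l(p^*)$, the equality $u^*_{kl}=u^*_{lk}$ then forces $p_k=p_l$. Either route establishes $R(\textbf{U}_i)=0\Leftrightarrow(\ref{eq:EvarCond})$, which together with Assumption~1 identifies the equilibrium states $\textbf{U}^{k,\infty}_i$ as precisely those satisfying (\ref{eq:EvarCond}).
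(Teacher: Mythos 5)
Your proof is correct, and its core follows the same route as the paper's Appendix \ref{appendix:variety}: Assumption 1 reduces the problem to the zero set of $R$, Assumption 2 (with $\sigma = u^*_{pq}$) collapses each Lagrangian flux to $p^*_{pq}D^*_{pq}$ (and to $-\sigma_{pq}$ for the volume fraction), and the resulting vector identity forces $\sigma_{kl}=\sigma_{lk}$ and $p^*_{kl}=p^*_{lk}$. The paper reaches the same two scalar conditions by splitting $R$ along the two linearly independent directions $[-1,0,0,\tfrac12(p^*_{lk}+p^*_{kl})]^T$ and $[0,0,1,\tfrac12(\sigma_{lk}+\sigma_{kl})]^T$, which is equivalent to your remark that the mass line is trivial and the energy line follows from the first and third. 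Where you genuinely add something is the final step: the paper simply declares $\sigma_{kl}=\sigma_{lk}=S^\infty$, $p^*_{lk}=p^*_{kl}=p^*$ to be ``the well-known conditions on relaxed states'' and immediately writes the equilibrium primitive vectors (\ref{Charcetrization}) with common velocity and pressure, leaving implicit the converse implication that equality of the swapped star states forces $u_k=u_l$ and $p_k=p_l$. You close that gap explicitly: first via the impedance-weighted interfacial formulas (\ref{eq:Interfacial}) --- the paper performs exactly this algebra in Appendix \ref{appendix:ContLimit} when computing $\sigma_{lk}-\sigma_{kl}$ and $p^*_{lk}-p^*_{kl}$, but in the service of deriving the relaxation source, not of proving this proposition --- and second via a monotonicity argument for exact-solver-type star functions, which is sound: since $f_k+f_l$ is strictly increasing and the right-hand side $\pm(u_k-u_l)$ flips sign under the swap, $p^*_{kl}=p^*_{lk}$ forces $u_k=u_l$, after which $u^*_{kl}=u^*_{lk}$ gives $f_k(p^*)=f_l(p^*)=0$ and hence $p_k=p_l=p^*$. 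Your caveat that this translation step is solver-sensitive is exactly right: the bare flux-splitting condition (\ref{eq:RSassumption}) alone only yields equality of the swapped star states, so some structural input on the solver (such as (\ref{eq:Interfacial}) or monotone star functions) is needed to recover (\ref{eq:EvarCond}); in this respect your argument is, if anything, more complete than the paper's.
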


\begin{proof}
See Appendix \ref{appendix:variety}.
\end{proof}

\begin{remark}
Notice that assumption (\ref{eq:RSassumption}) is satisfied by many  popular Riemann solvers, including the exact, HLLC, and acoustic solvers.
\end{remark}

\begin{remark}
Notice that (\ref{eq:EvarCond}) \emph{does not} imply that any solver fulfilling the aforementioned assumptions will produce the same approximations for $S^\infty_i$ or $p_i^\infty$. Specifically, different solvers will produce different value for the interface velocities, in general. 
Thus, the form of the relaxation term is characterizing the equilibrium variety, but it yields no information on \emph{how to compute such values}.  
\end{remark}

\section{Numerical Experiments}
\label{sec:numex}

In this section we test the numerical algorithm to show the influence of the newly derived set of probabilities. Numerical fluxes have been computed using the HLLC flux for the Euler equations \cite{ToroRS,ToroHLLC} and Lagrangian fluxes have been computed according to
\begin{equation}\label{lag_HLLC}
F^{lag} = F_\mathrm{HLLC} - S^*_\mathrm{HLLC}U^*_\mathrm{HLLC}
\end{equation}
where $F_\mathrm{HLLC}$, $S^*_\mathrm{HLLC}$,$U^*_\mathrm{HLLC}$ denote the numerical flux, the speed of the contact discontinuity and the intermediate (star) value provided by the HLLC solver, see \cite{ToroRS} for details. Notice that the relation (\ref{lag_HLLC}) is crucial: indeed, one could be tempted to use the acoustic solver provided in \cite{Murrone, Saurel2003,ToroRS} to approximately compute the Lagrangian flux. However, this choice has been found to produce erroneous pressure oscillations near discontinuities, especially in absence of relaxation. Furthermore, we point out that such Riemann Solver for the Lagrangian Flux could also be interpreted to be non-positive conservative in the sense of \cite{Einfeldt1991}. For all our simulations we used a CFL constraint of $\mathrm{CFL} = 0.9$. Materials are governed by the stiffened gas equation of state
\begin{equation}
p_k = (\gamma_k-1)\rho_k e_k - \gamma_k \pi_k.
\end{equation}
The parameters of gas are $\gamma_1 = 1.4\,\mathrm{Pa}$, $\pi_1 = 0$, while for the liquid phase are $\gamma_2 = 4.4$ and $\pi_l = 6\times 10^8\,\mathrm{Pa}$. Each experiment is computed on the domain $D=[-1,1]$, unless differently stated.

\subsection{Uniform Volume Fraction}

The first numerical experiment consists of a shock-tube problem, with a uniform volume fraction. The initial mixtures consists of a strong pressure difference.
The initial condition in terms of the primitive variables $\textbf{V} = [\alpha,\rho,u,p]$ reads:
\[
\textbf{V}_0(x) = 
\begin{cases}
[\textbf{V}_L^{(1)}, \textbf{V}_L^{(2)}] & \textit{ if }\, x<0,\\
[\textbf{V}_R^{(1)}, \textbf{V}_R^{(2)}] & \textit{ if }\, x>0.
\end{cases},
\qquad
\textbf{V}_L^{(k)} = \begin{bmatrix}
0.5\\
\rho_k\\
0\\
p_L
\end{bmatrix},
\quad
\textbf{V}_R^{(k)} = \begin{bmatrix}
0.5\\
\rho_k\\
0\\
p_R
\end{bmatrix}
\quad
k\in\lbrace 1,2\rbrace
\]
where $\rho_1 = 50$, $\rho_2 = 1000$, $p_L = 10^9$, $p_L = 10^5$.

\subsubsection{The relaxation-free case}

We initially assume that $\lambda_i = 0$ for any $i$: solutions associated to a stratified flow evolve independently whereas in the dispersed regime, interactions between the fluids do occur. For the sake of comparison, we report the solution of such problem with $r=0$ in Fig.\ref{Fig:T1r0} (originally proposed in \cite{Abgrall&Saurel}) and the one associated to $r=1$ in Fig.\ref{Fig:T1r1}. The latter case is presented using several meshes to show convergence, whereas the case $r=0$ is compared to the single-phase exact solutions, to show independence of the two numerical simulations.

\begin{figure}[!htbp]
\centering
\includegraphics[scale = 0.47]{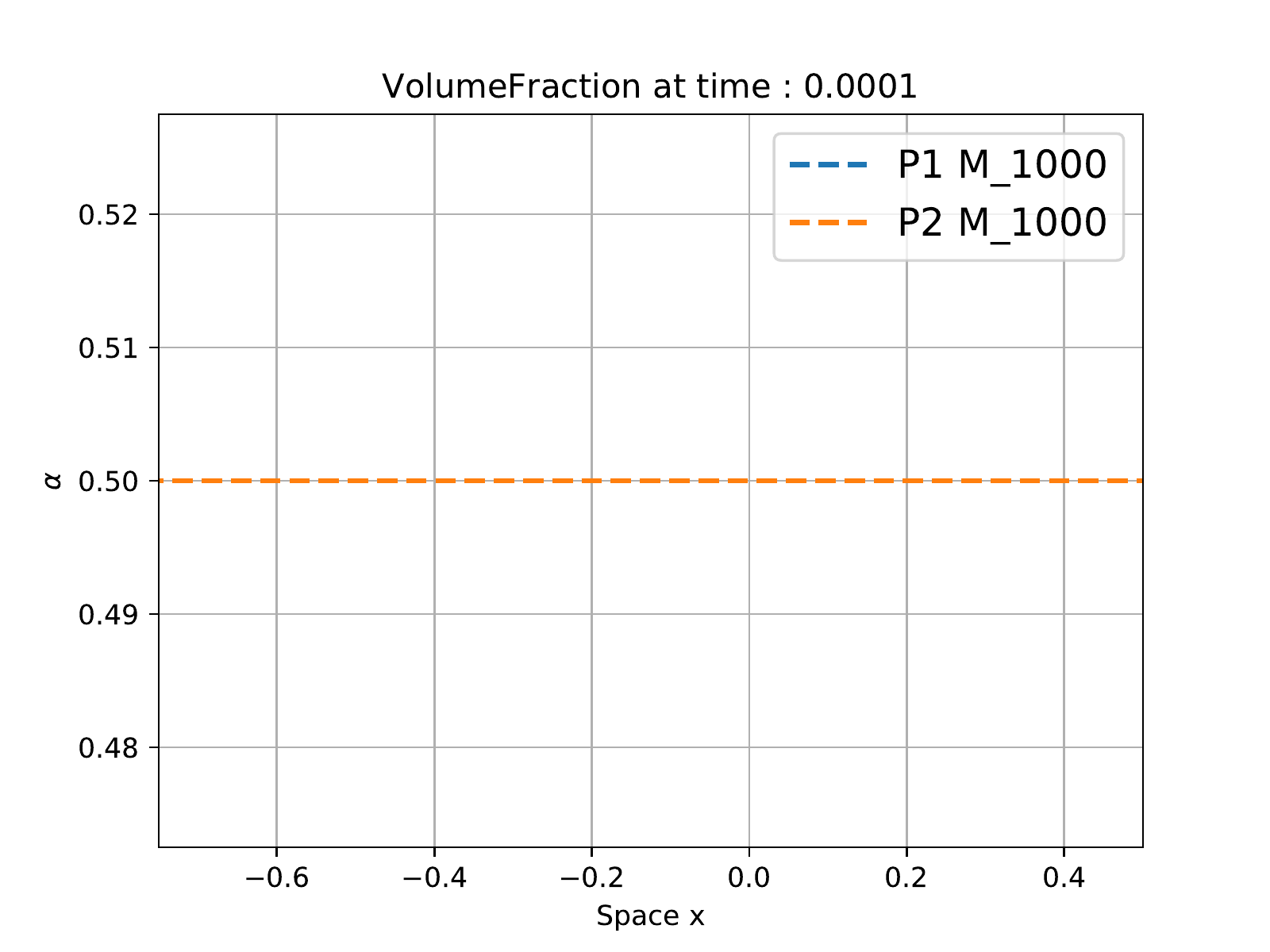}\,
\includegraphics[scale = 0.47]{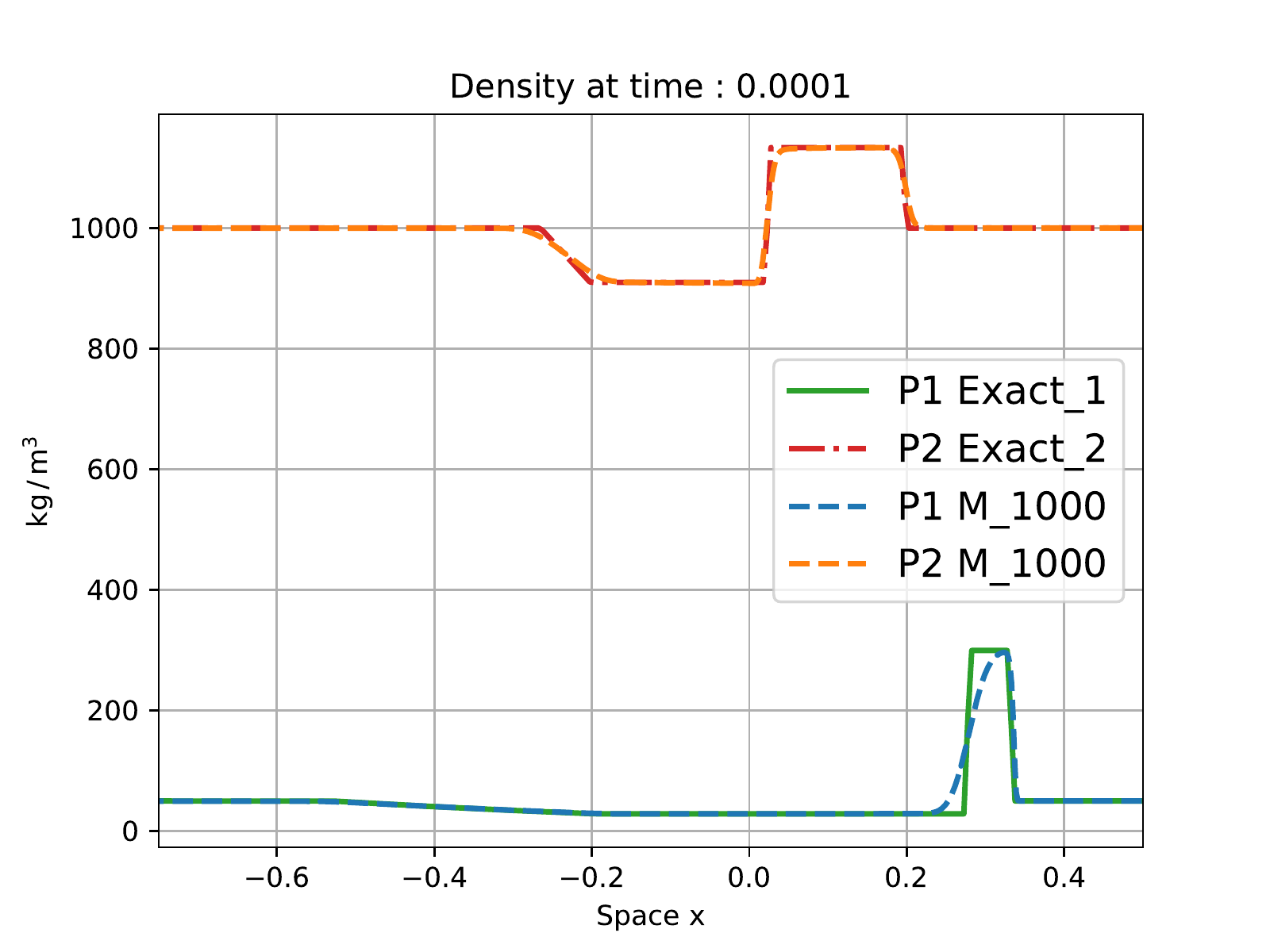}\\
\includegraphics[scale = 0.47]{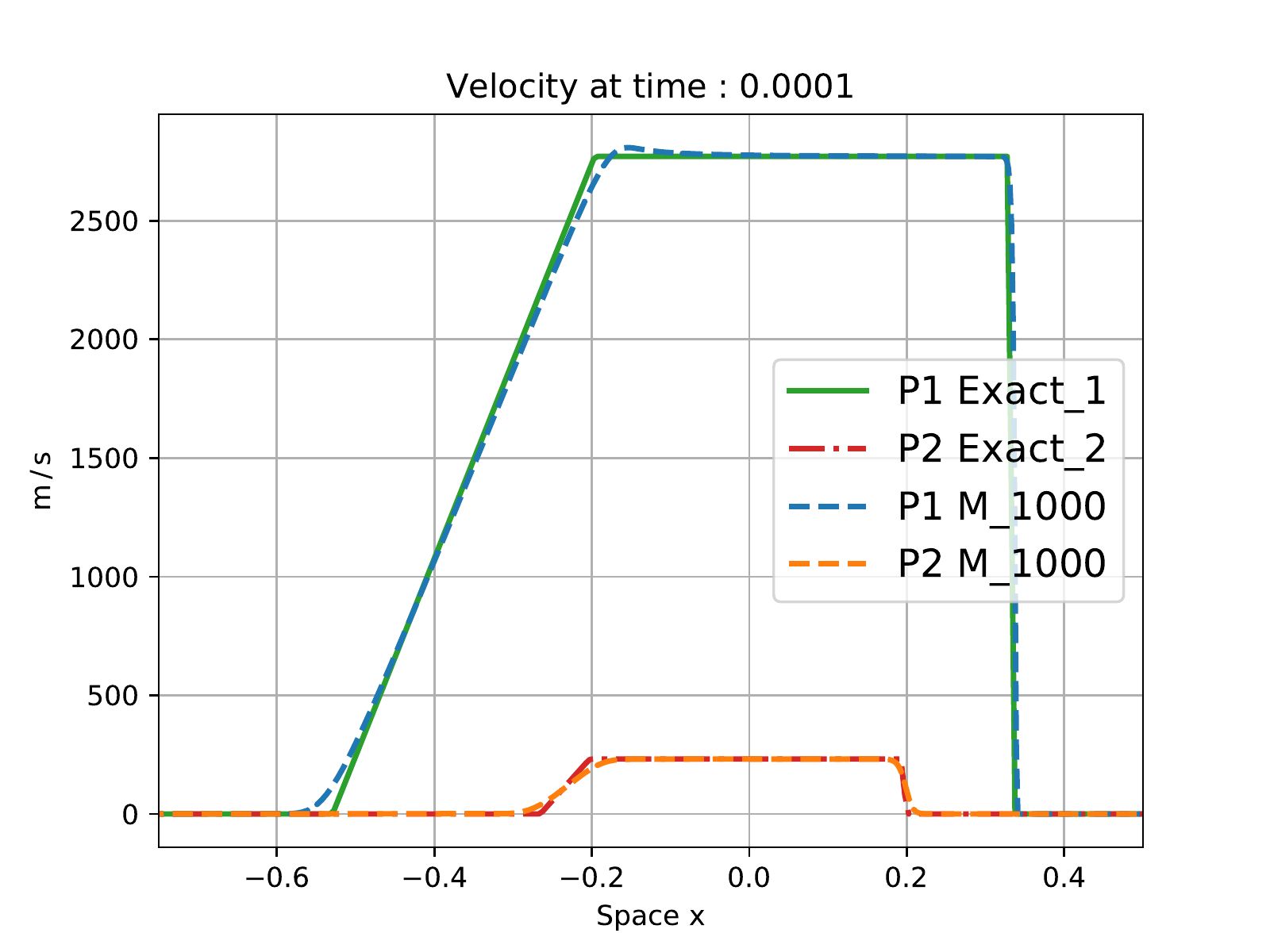}\,
\includegraphics[scale = 0.47]{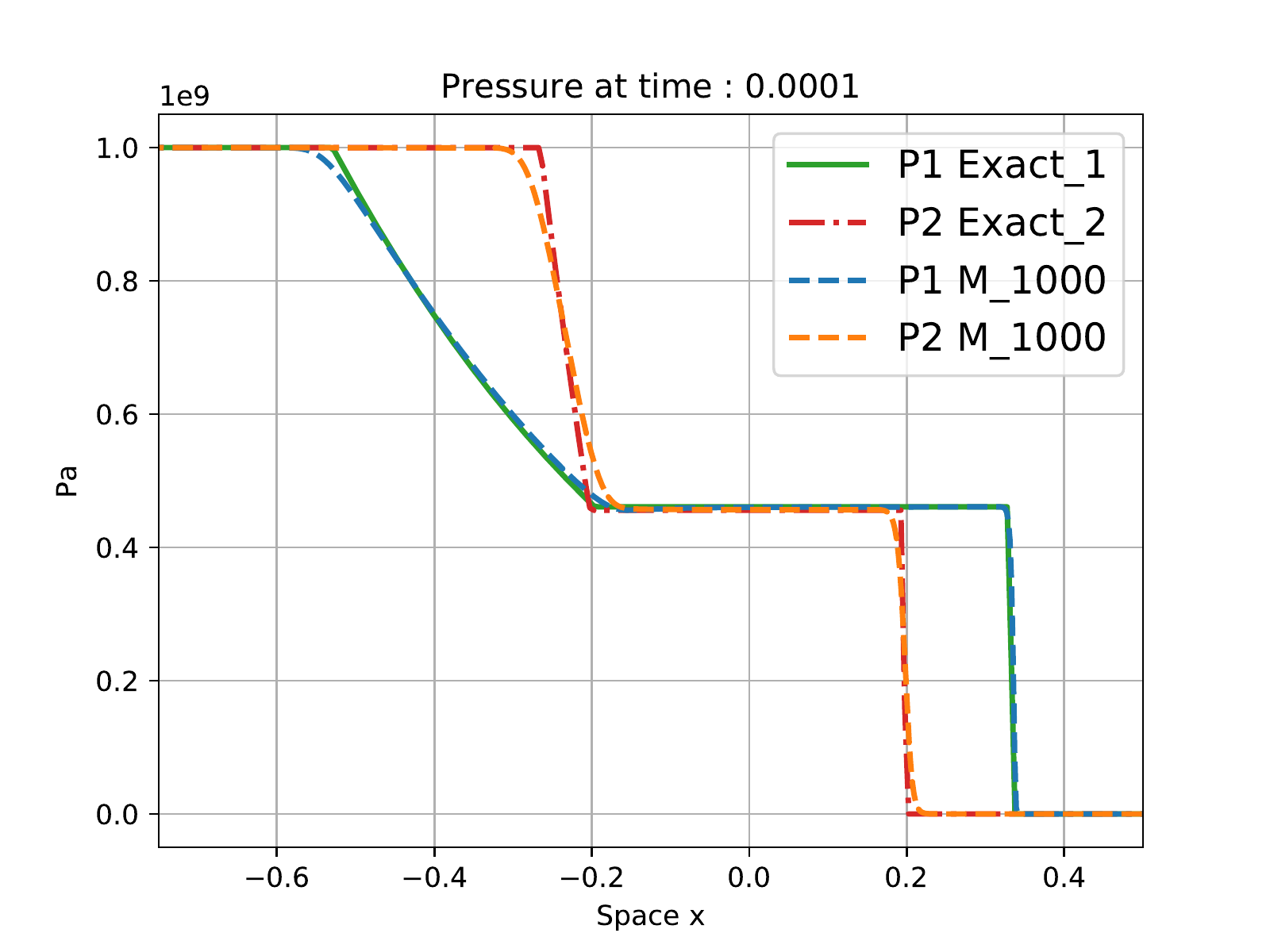}
\caption{Uniform volume fraction test at time $t = 100\, \mu\mathrm{s}$ of a stratified flow ($r=0$). Numerical solutions (dashed lines) of gas ($P1$) and liquid ($P2$) phases have been computed with a uniform mesh of $M=1000$, and are reported against their exact solutions (solid and dash-dotted line).}\label{Fig:T1r0}
\end{figure}

\begin{figure}[!htbp]
\centering
\includegraphics[scale = 0.47]{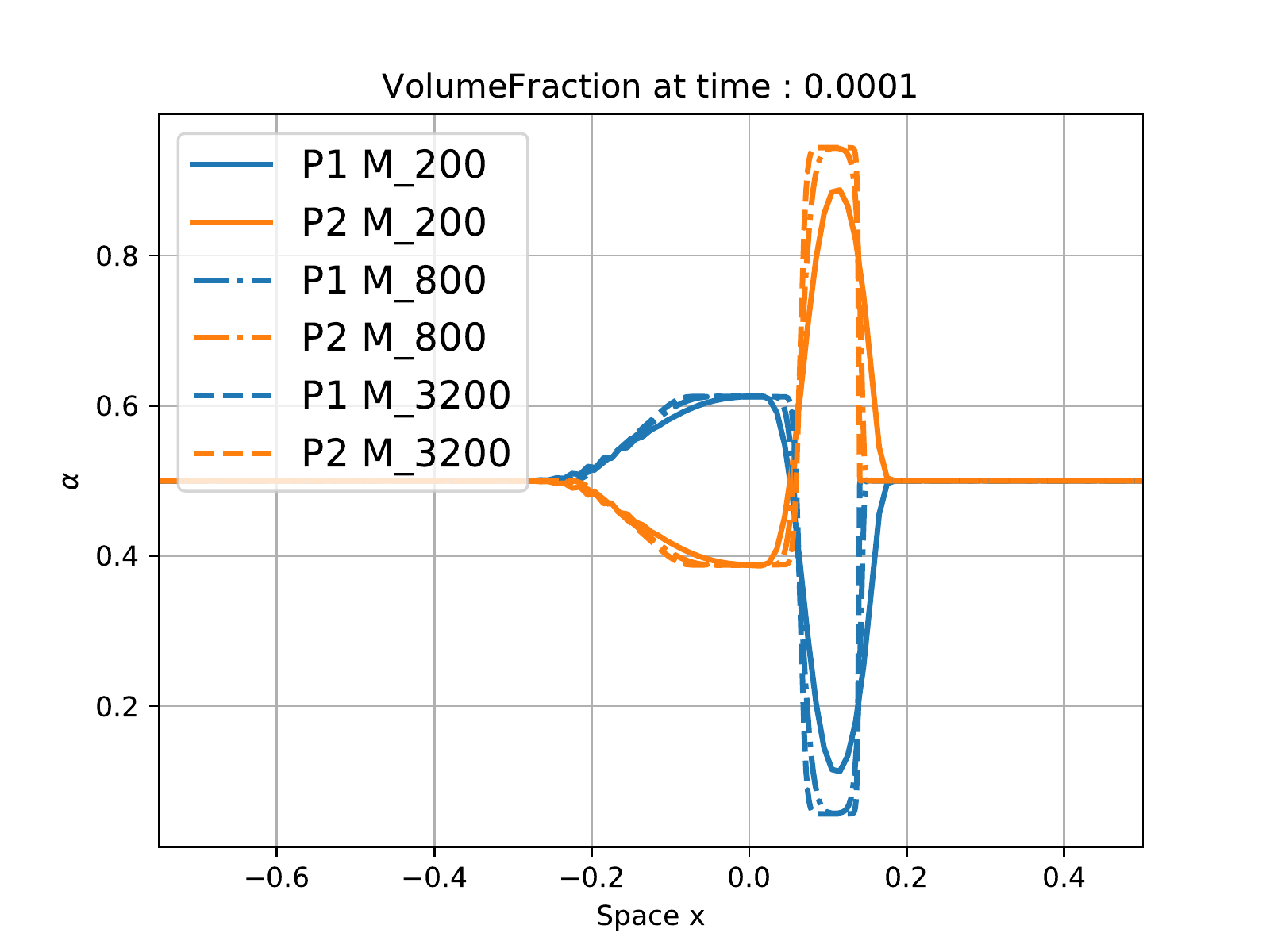}\,
\includegraphics[scale = 0.47]{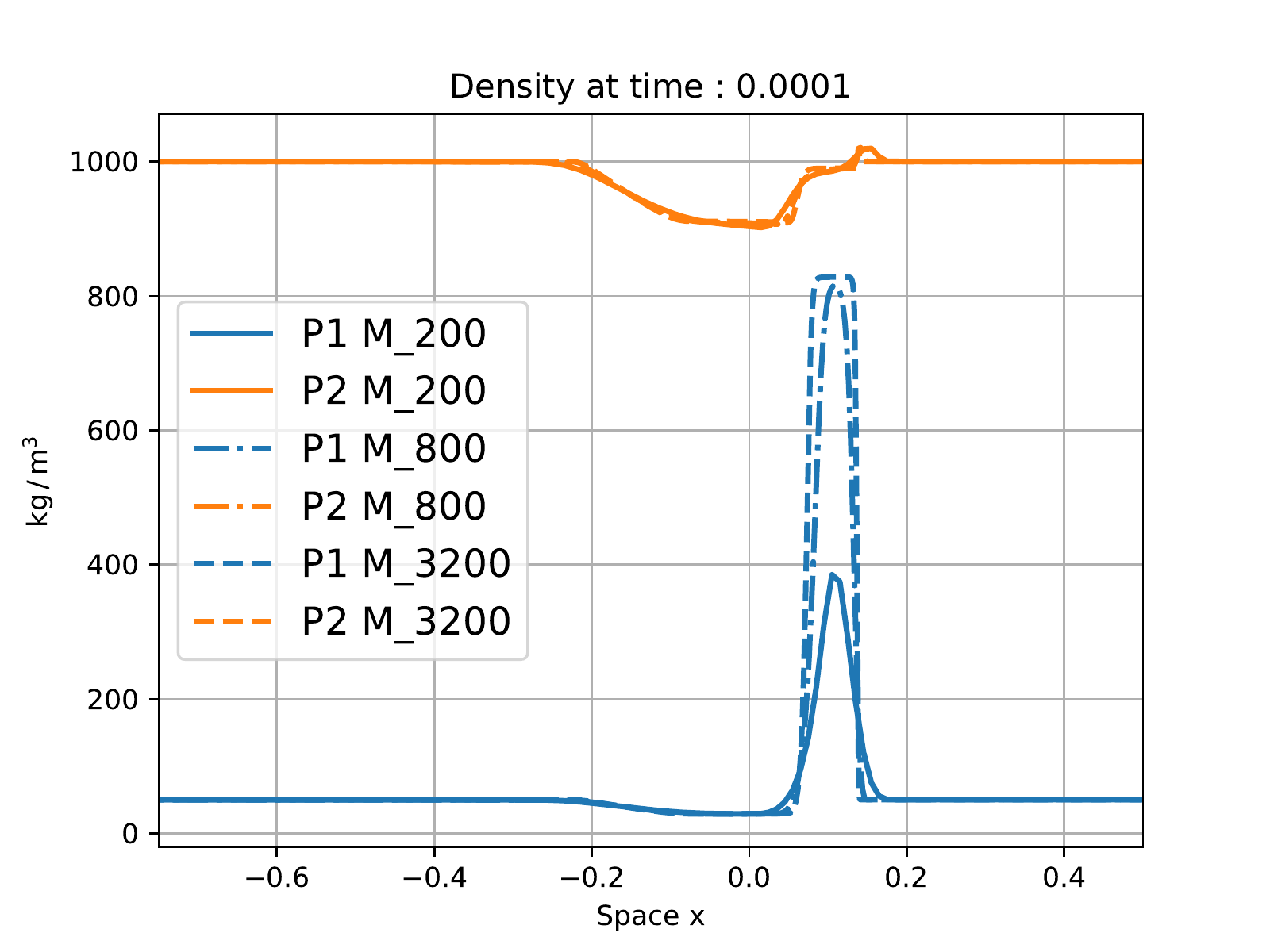}\\
\includegraphics[scale = 0.47]{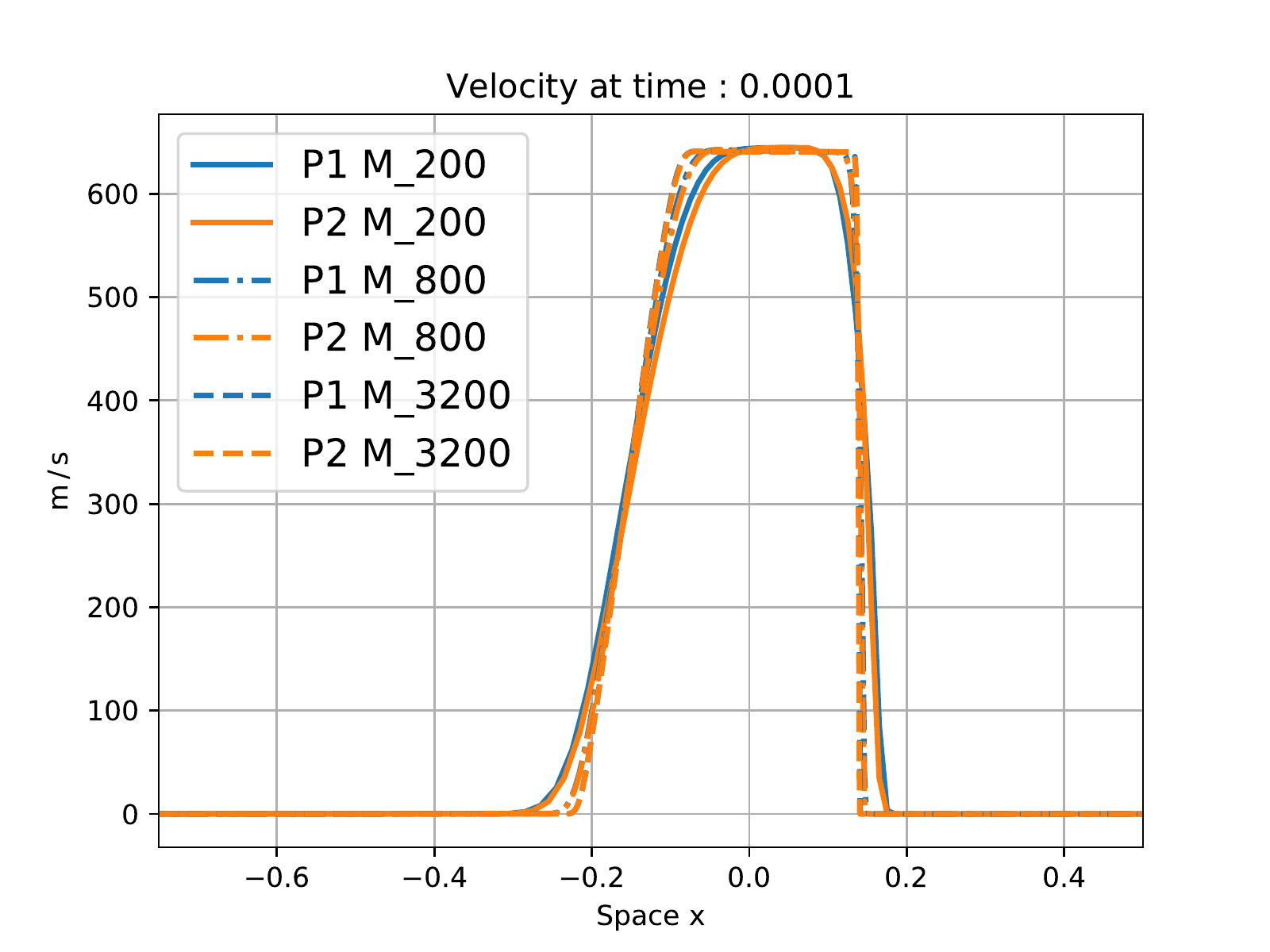}\,
\includegraphics[scale = 0.47]{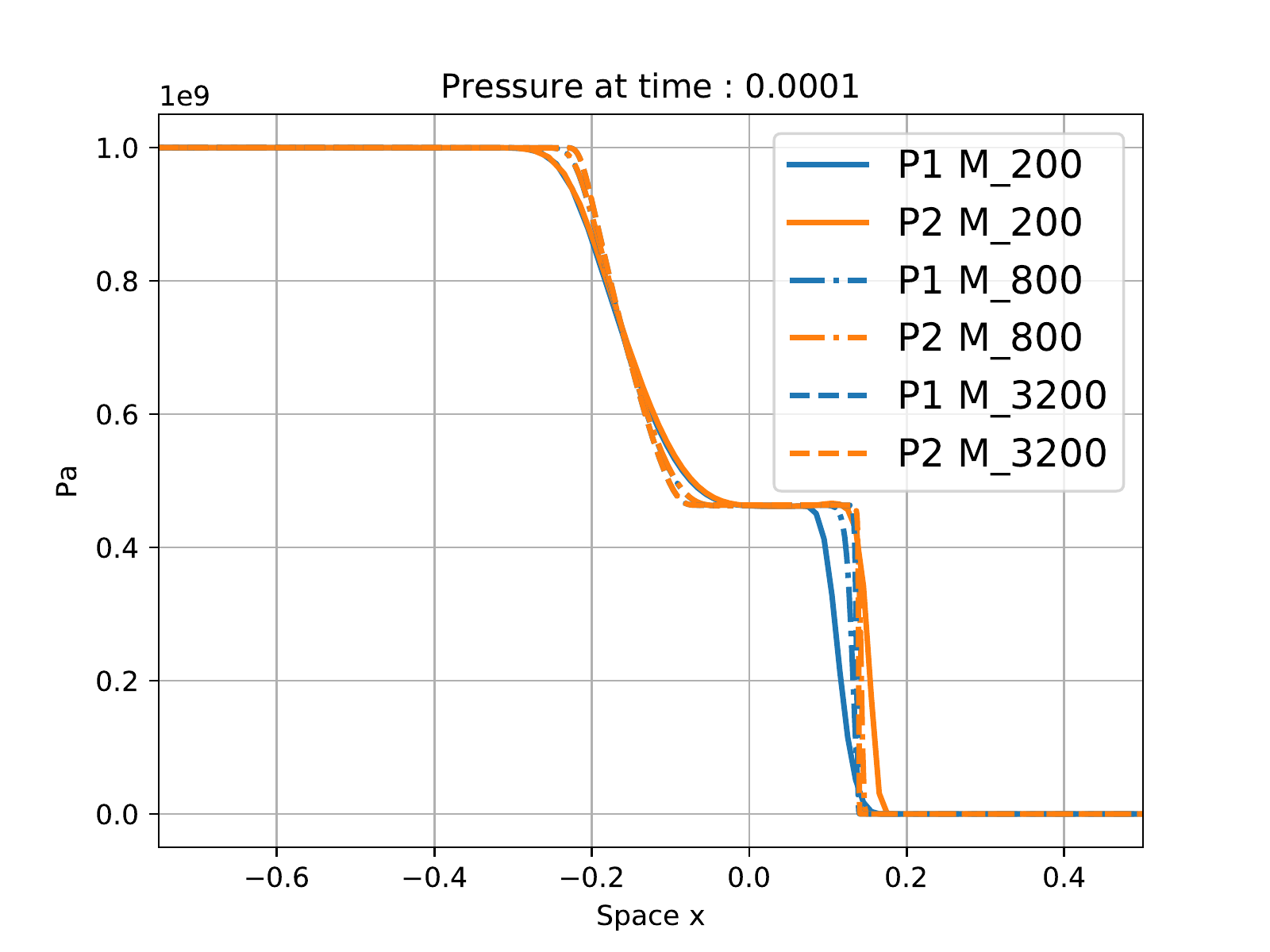}
\caption{Uniform volume fraction test at time $t = 100\, \mu\mathrm{s}$ of a disperse flow ($r=1$). Numerical solutions for gas phase ($P1$) and the liquid phase ($P2$) computed with number of cells $M$ are reported.}\label{Fig:T1r1}
\end{figure}

The stratified flow regime simulates two non-interacting fluids one on top of the other, while the disperse one models a dilute flow of air inside water.
As expected, this latter situation leads to interaction of phases, even though no relaxation is imposed. This is due to the discontinuity of volume fraction at cell interface that enters in the numerical flux through the probability coefficients. Notice the perfect coupling of phases in absence of relaxation for the case $r=1$. This clearly highlights the importance of Lagrangian fluxes to maintain it.
Furthermore, results for the case $r=1$ show near coalescence of velocity and pressure curves: the two phases seem to converge to equilibrium. However, inspection of shock profiles shows slight differences between the fluids, see Fig. \ref{Fig:T1r1Zoom}.

\begin{figure}[!htbp]
\centering
\includegraphics[scale=0.47]{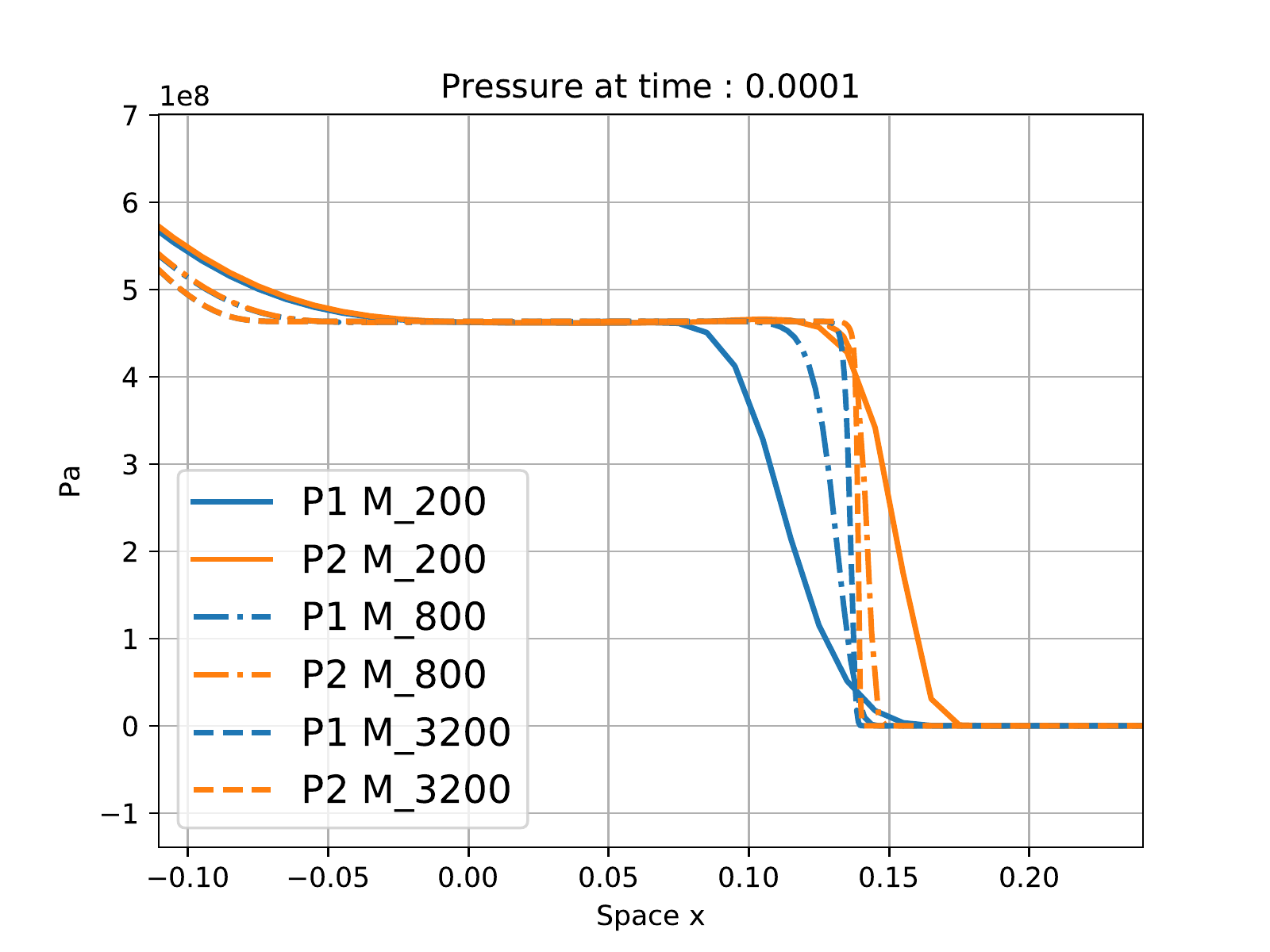}\,
\includegraphics[scale=0.47]{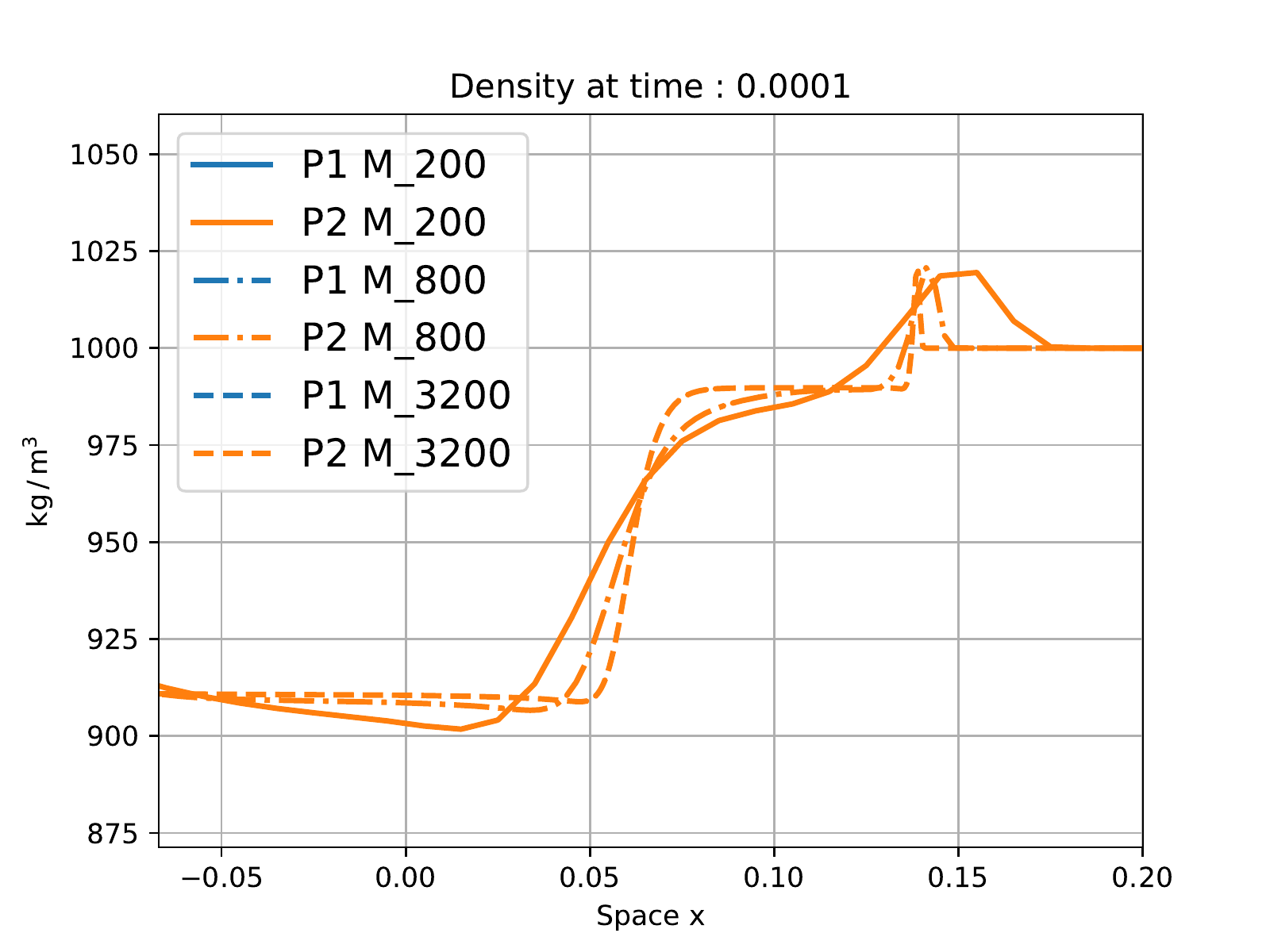}
\caption{Magnified details of pressure shock profiles (left) and density overshoots (right), from Fig. \ref{Fig:T1r1}}\label{Fig:T1r1Zoom}
\end{figure}

Notice that such an example suggests that, when choosing $r\neq 0$, relaxation is not the only mechanical interaction between the two fluids. Finally, an overshoot in the top right corner of gas density phase appears for $r=1$, whose amplitude reduces by mesh refinement, see Fig. \ref{Fig:T1r1Zoom}, suggesting convergence in $L^1$-norm.

\begin{figure}[!htbp]
\centering
\includegraphics[scale = 0.47]{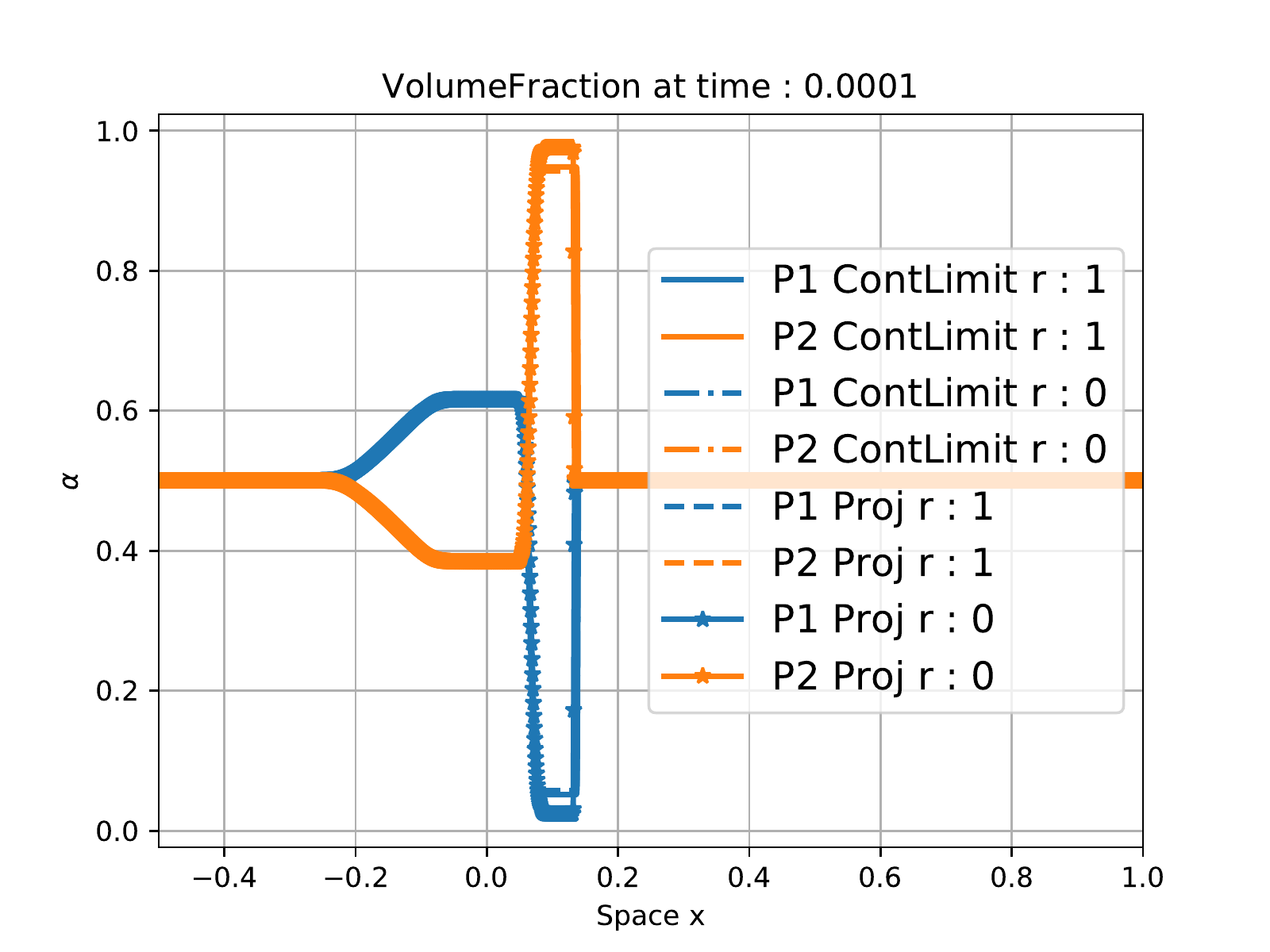}\,
\includegraphics[scale = 0.47]{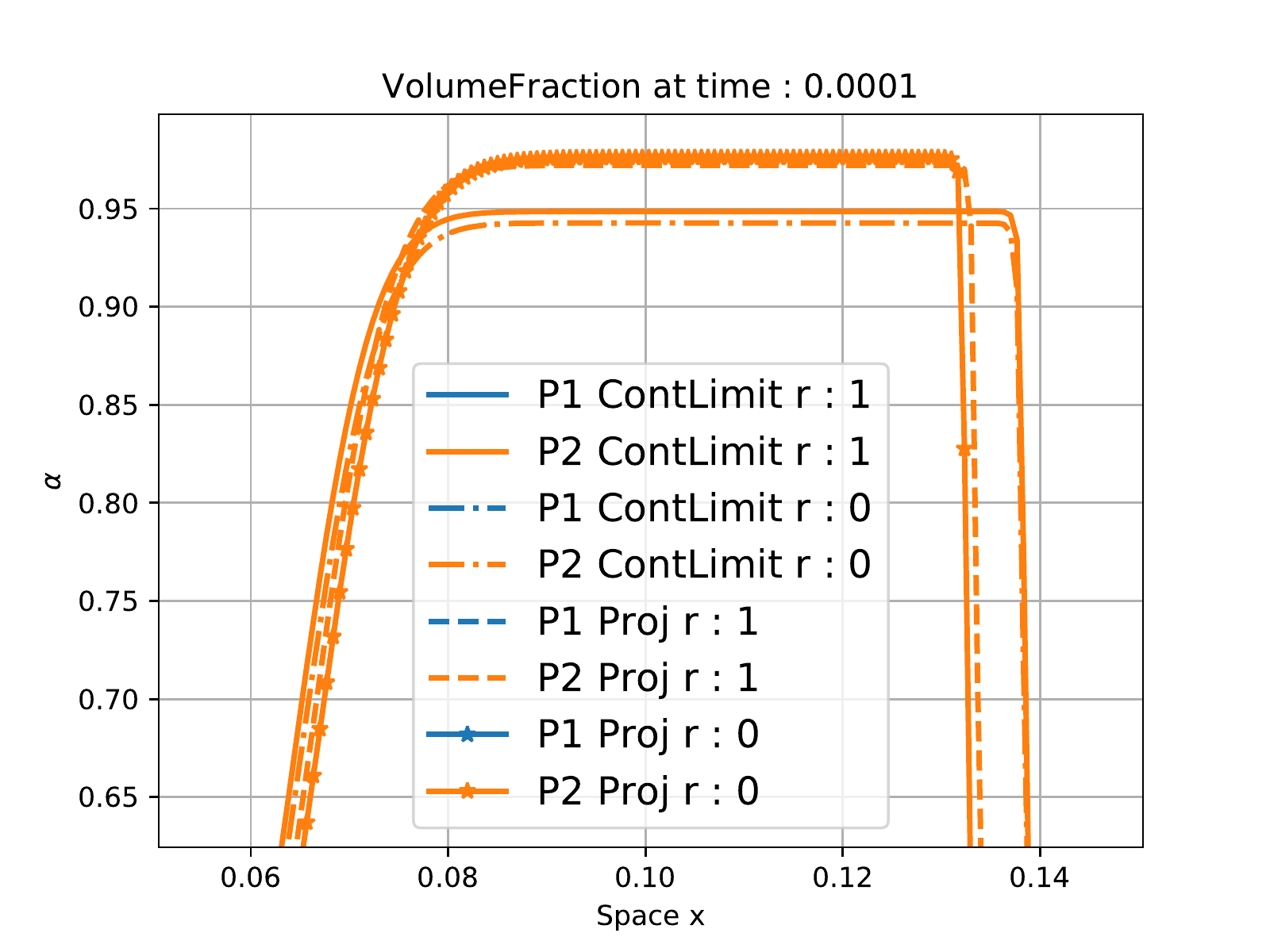}
\caption{Uniform volume fraction test at time $t = 100\, \mu\mathrm{s}$ using an infinite drag coefficient. Left column : comparison between the limit-based relaxation strategy and the projection method; right column: magnified detail of phase $2$ post-sock state. 
Numerical solutions for gas phase ($P1$) and the liquid phase ($P2$) have been computed with $M=3000$ cells for both the stratified ($r=0$) and disperse ($r=1$) regimes.}\label{Fig:T2_a}
\end{figure}

\subsubsection{Adding relaxation}

We perform the same test, but adding the relaxation procedures described in Appendix \ref{appendix:RelaxationStrategies}. In this setting an infinite interfacial area is present inside each cell. Results for both regimes (i.e. the stratified and the disperse case) computed with different relaxation procedures are reported in Fig. \ref{Fig:T2_a} and Fig. \ref{Fig:T2}. In each of these figures, results for each relaxation strategy and each $r\in \lbrace 0,1\rbrace$ are presented for comparison.\\
Fixing a relaxation strategy, analogous results are obtained for both flow regimes (each choice of $r$), even though discrepancies between the two patterns can be recognized near rarefaction and shocks. 
Particularly evident is the impact of the choice of $r$ for the post-shock state of density (see first raw of Fig. \ref{Fig:T2}), even if comparable discrepancies can be recognized even for the rest of quantities of interest.\\
Furthermore, discrepancies can also be seen comparing results for different relaxation procedures. For example, differences in shock location predictions are present between relaxation procedures, see Fig. \ref{Fig:T2_a}. This highlights the fact that the numerical solution of such a test problem is highly dependent on both the relaxation strategy and the probability coefficients at the volume interfaces, raising the question of uniqueness: how can we single-out a physically relevant solution among the infinitely many generated by different realizations of the relaxation strategy and the parameter $r$ ?

\begin{figure}[!htbp]
\centering
\includegraphics[scale = 0.47]{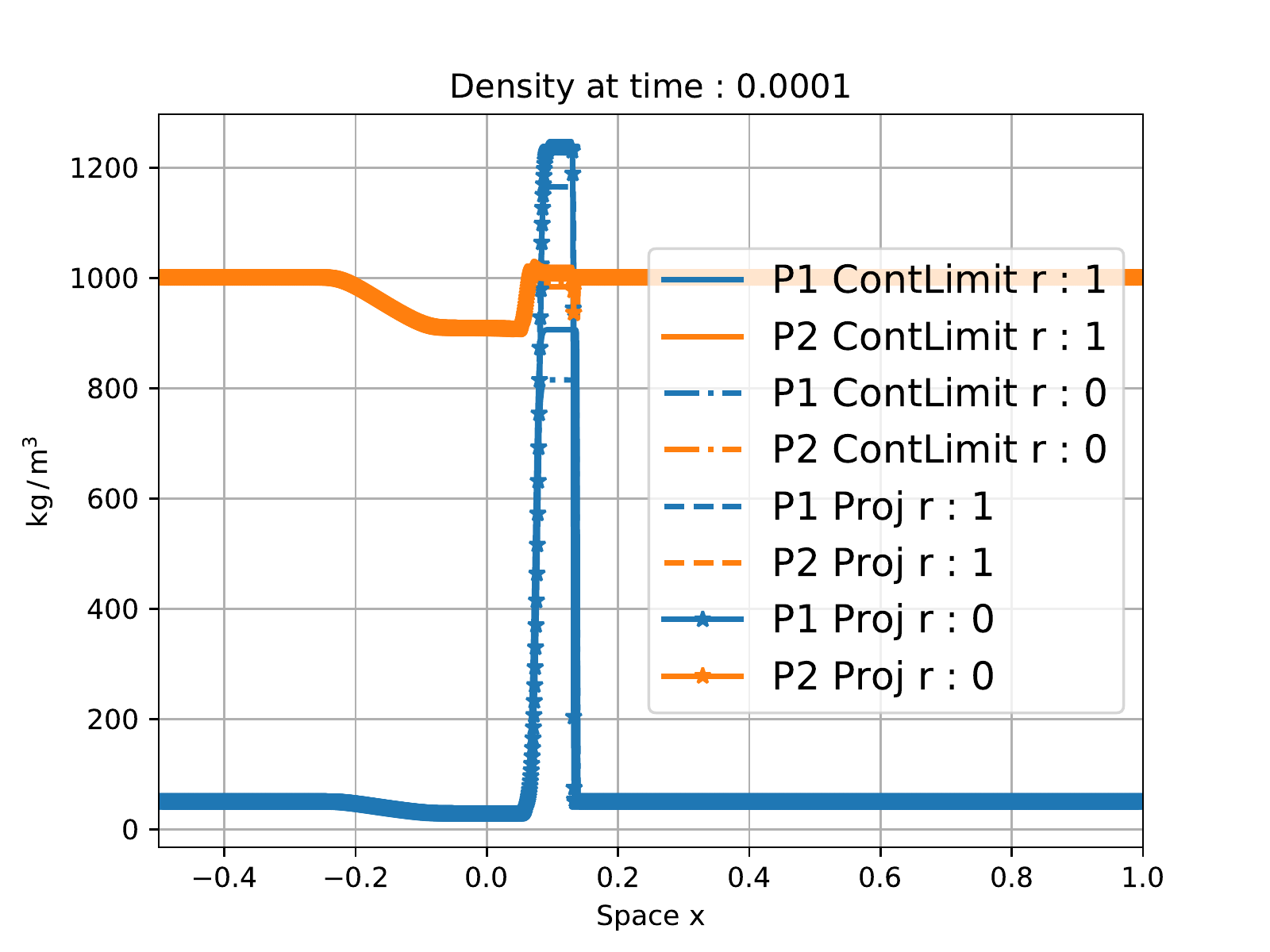}\,
\includegraphics[scale = 0.47]{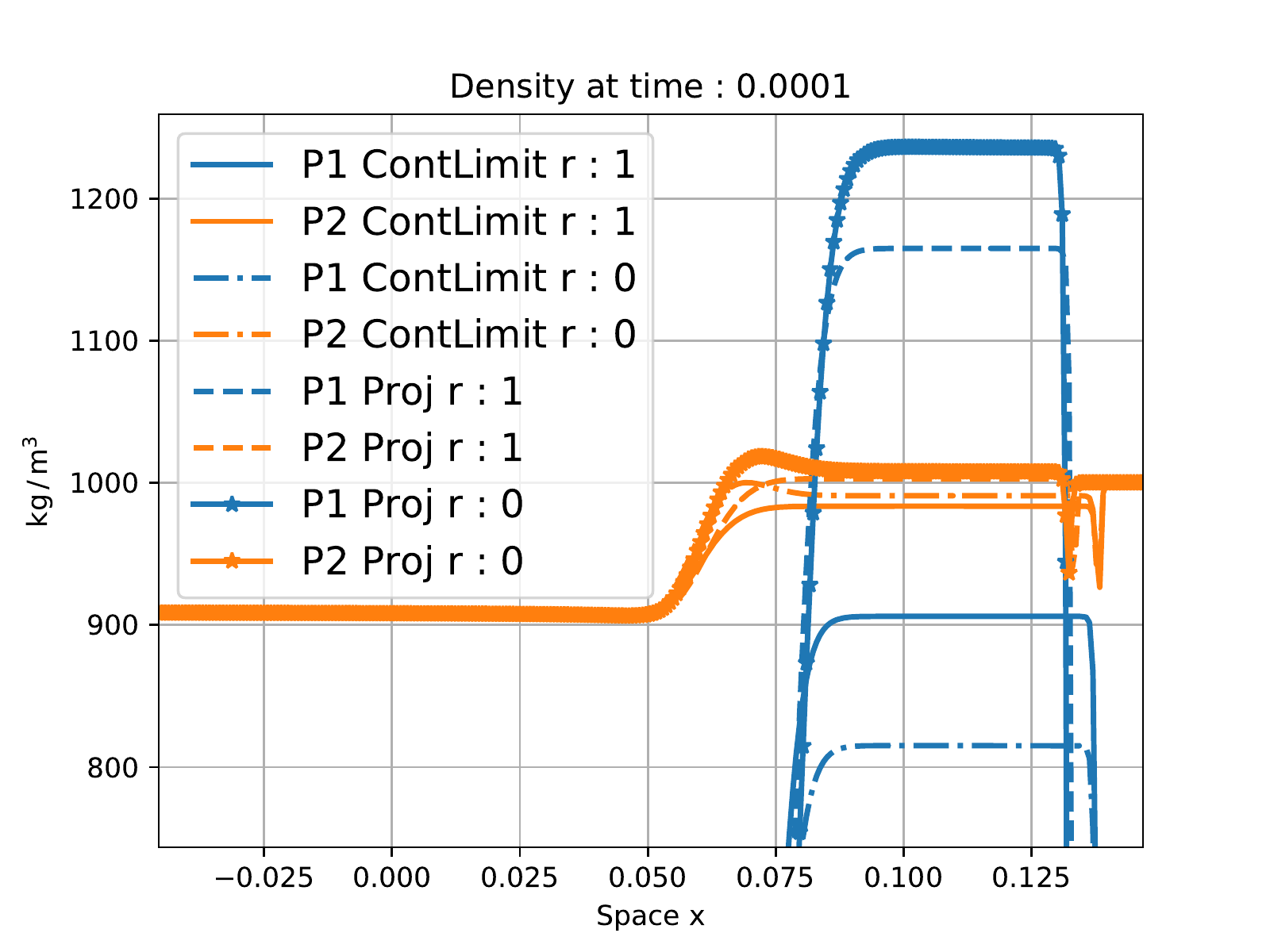}\\
\includegraphics[scale = 0.47]{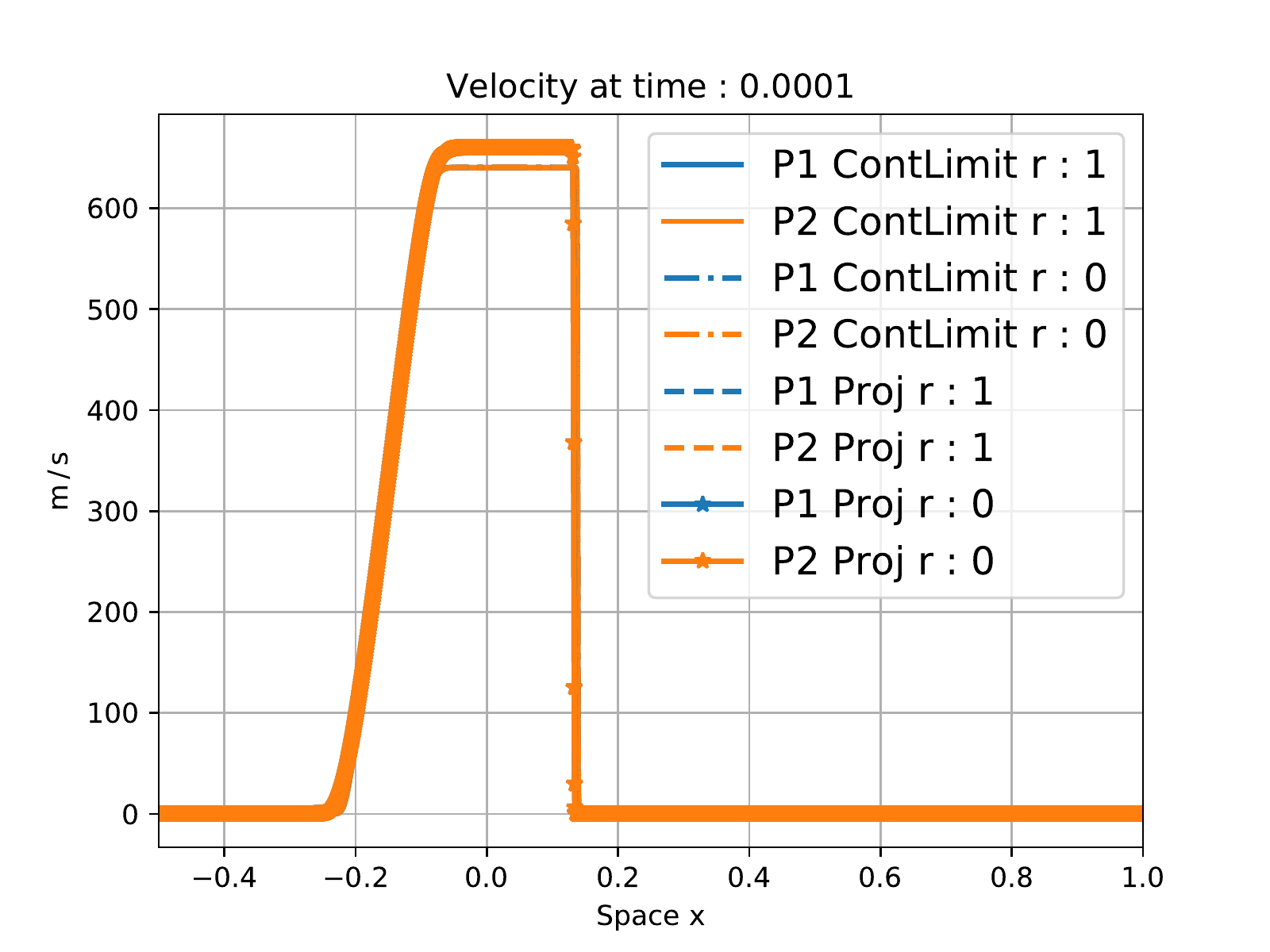}\,
\includegraphics[scale = 0.47]{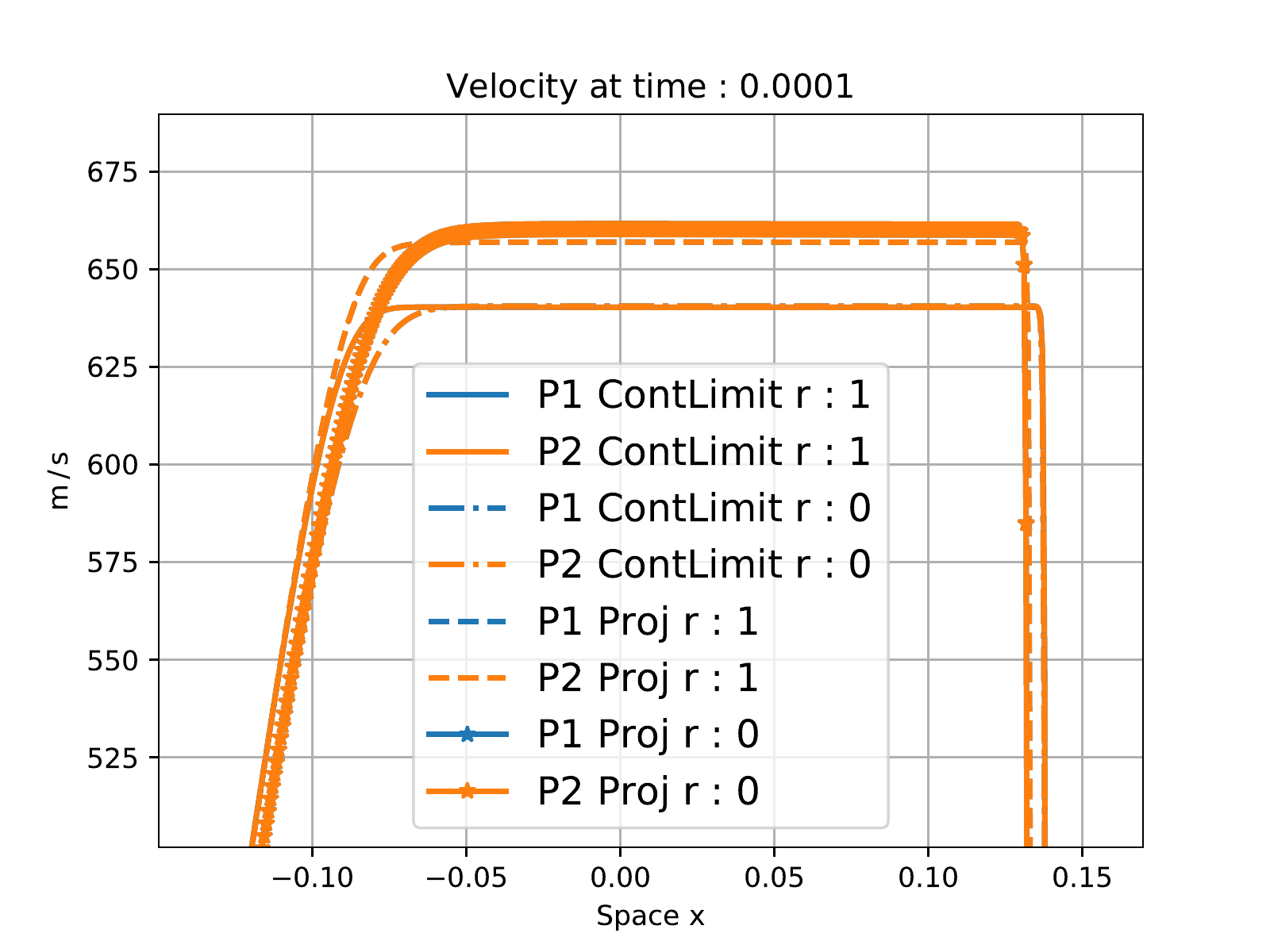}\\
\includegraphics[scale = 0.47]{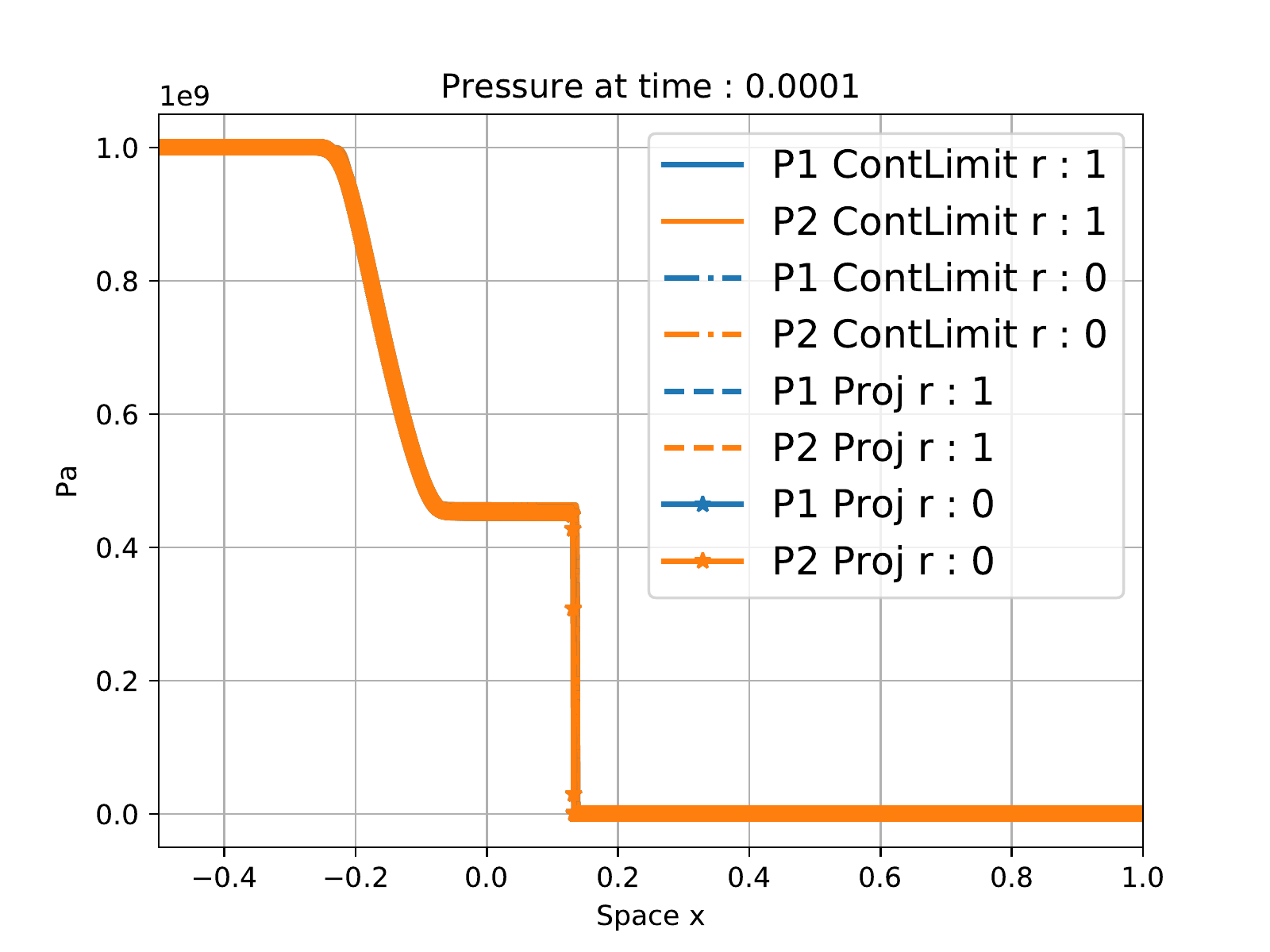}\,
\includegraphics[scale = 0.47]{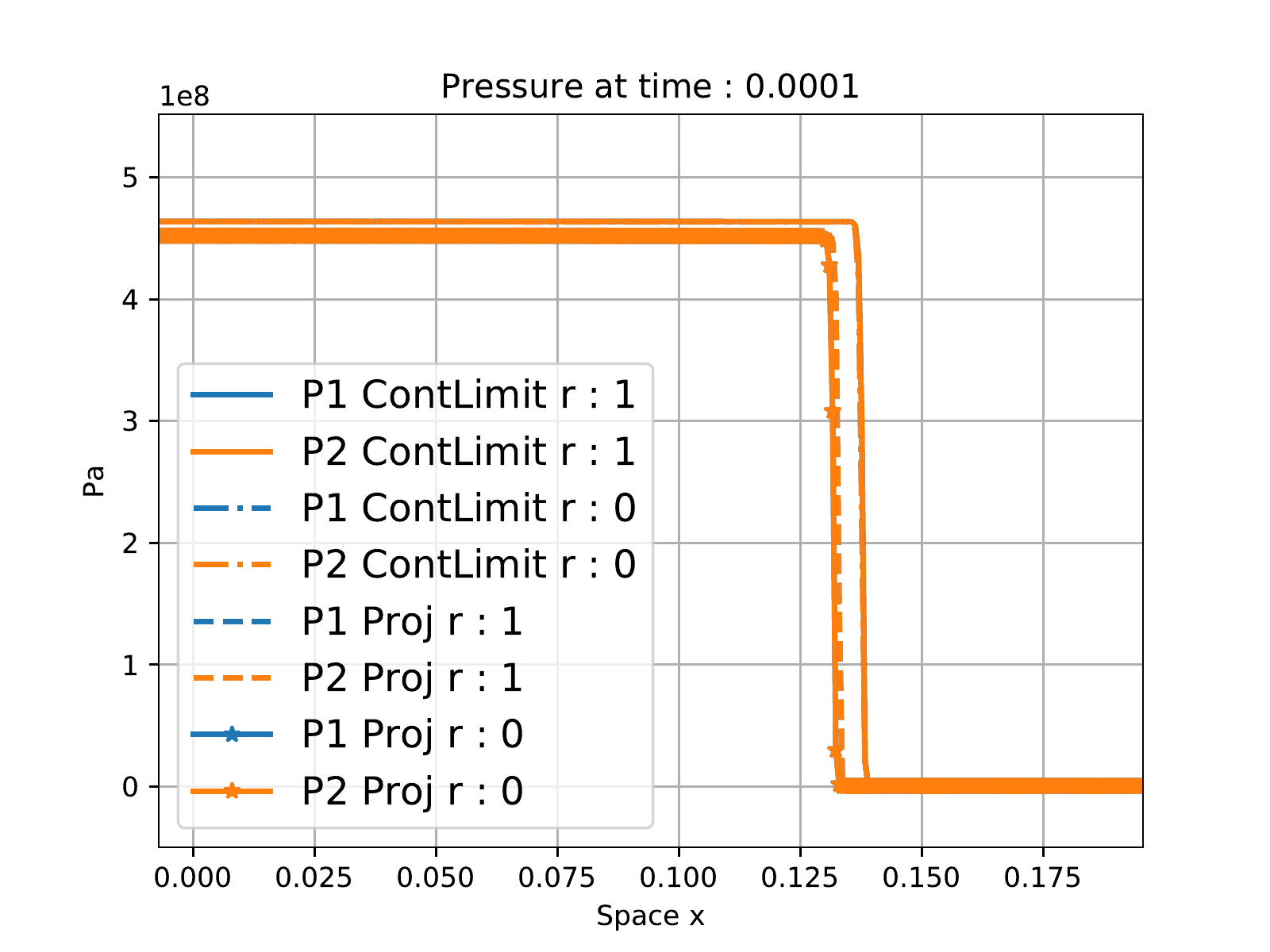}
\caption{Uniform volume fraction test at time $t = 100\, \mu\mathrm{s}$ using an infinite drag coefficient. Left column : comparison between the limit-based relaxation strategy and the projection method; right column: magnified detail of phase $2$ post-sock state. 
Numerical solutions for gas phase ($P1$) and the liquid phase ($P2$) have been computed with $M=3000$ cells for both the stratified ($r=0$) and disperse ($r=1$) regimes.}\label{Fig:T2}
\end{figure}

% FIGURES BELONGING TO THE NEXT SUBSECTION
\subsection{Pure Phases}

A prototypical benchmark problem for the simulation of two-phase flow is the ability of a scheme of resolving sharp interfaces or reproducing pure phases.
Unfortunately the present scheme does not enjoy such property, due to numerical viscosity. Indeed, when attempting to simulate sharp interfaces separating different constituents, the numerical scheme will not maintain the volume fraction in the set $\mathcal{X} = \lbrace 0,1\rbrace$, due to numerical diffusion. This corresponds to smearing out the interface over several computational cells, thus creating a mixing zone around the exact interface location. A numerical artifact used to circumvent the numerical failure arising in such situation is to assume a negligible amount of dispersed phase, as to stabilize the algorithmic procedure.\\
For the sake of comparison we therefore assume such a strategy to investigate the impact of parameter $r$ when simulating pure phases.
We consider the following initial condition in terms of the primitive variables $\textbf{V} = [\alpha,\rho,u,p]$,
\[
\textbf{V}_0(x) = 
\begin{cases}
[\textbf{V}_L^{(1)}, \textbf{V}_L^{(2)}] & \textit{ if }\, x<0,\\
[\textbf{V}_R^{(1)}, \textbf{V}_R^{(2)}] & \textit{ if }\, x>0.
\end{cases}
\]
where
\[
\textbf{V}_L^{(1)} = \begin{bmatrix}
10^{-6}\\
50\\
0\\
2\cdot 10^8
\end{bmatrix},
\quad
\textbf{V}_L^{(2)} = \begin{bmatrix}
1-10^{-6}\\
1000\\
0\\
2\cdot 10^8
\end{bmatrix}
\quad
\textbf{V}_R^{(1)} = \begin{bmatrix}
1-10^{-6}\\
50\\
0\\
10^5
\end{bmatrix},
\quad
\textbf{V}_R^{(2)} = \begin{bmatrix}
10^{-6}\\
1000\\
0\\
10^5
\end{bmatrix}
\]
\begin{figure}[!htbp]
\centering
\includegraphics[scale = 0.47]{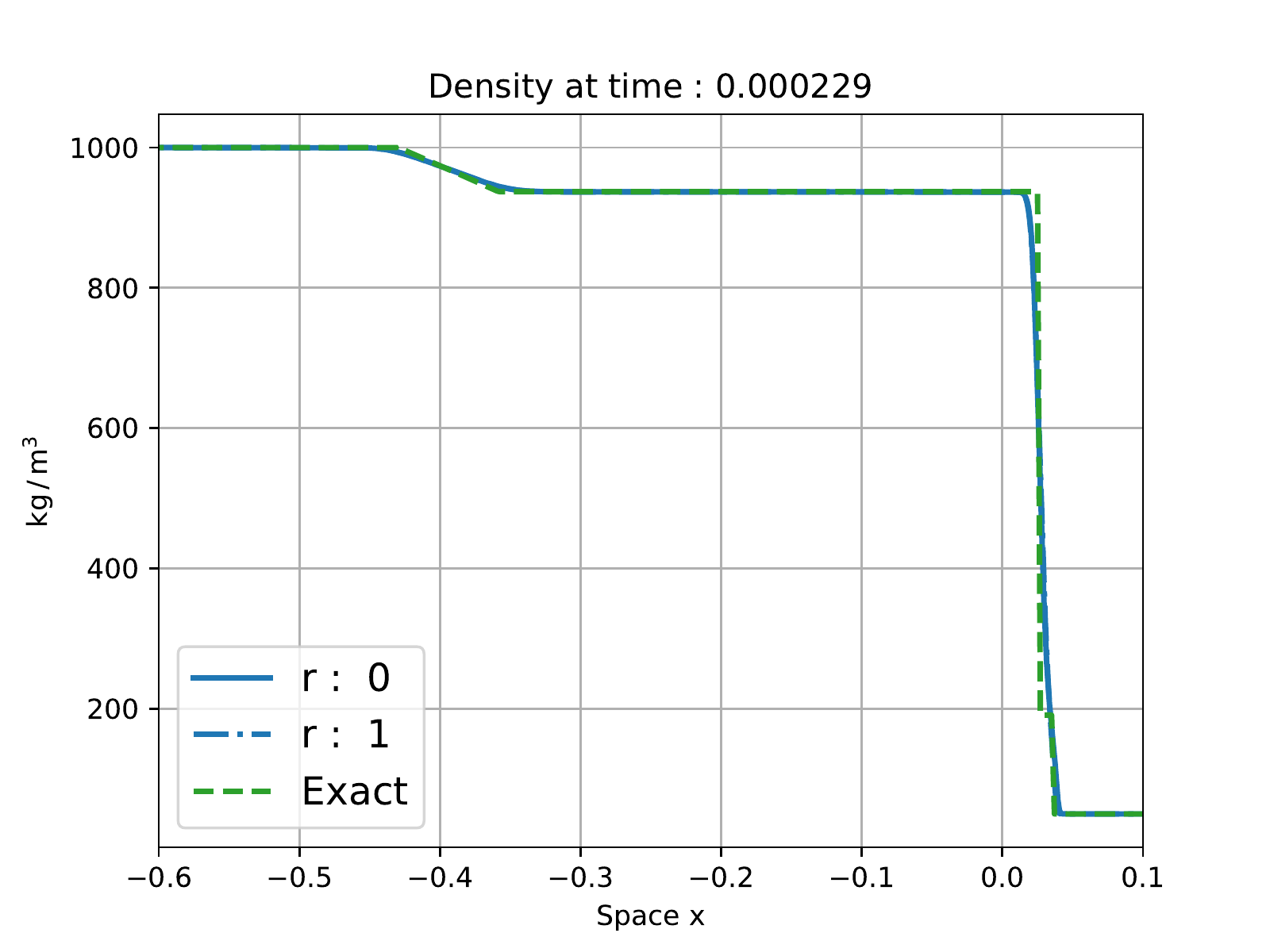}\\
\includegraphics[scale = 0.47]{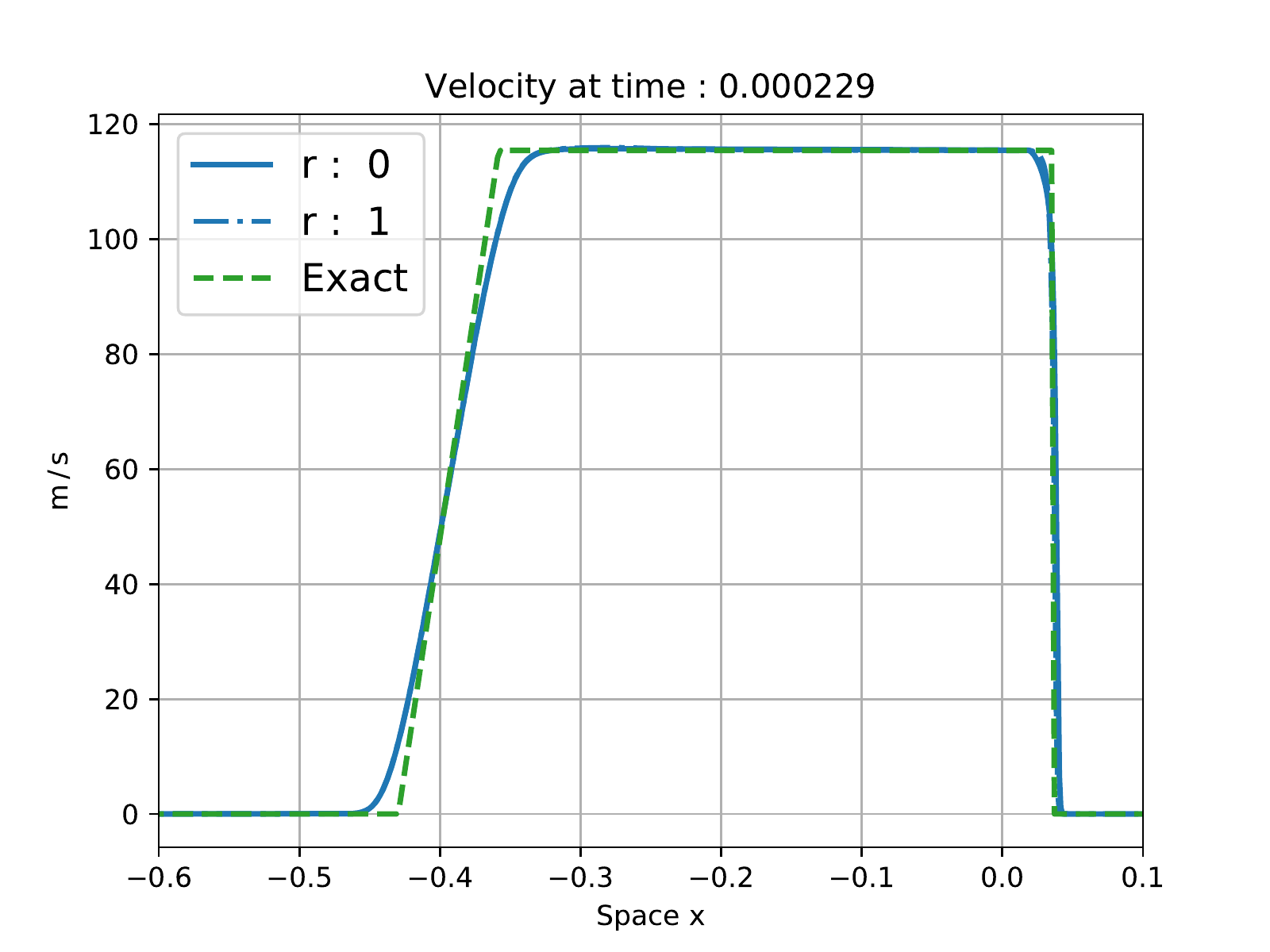}\,
\includegraphics[scale = 0.47]{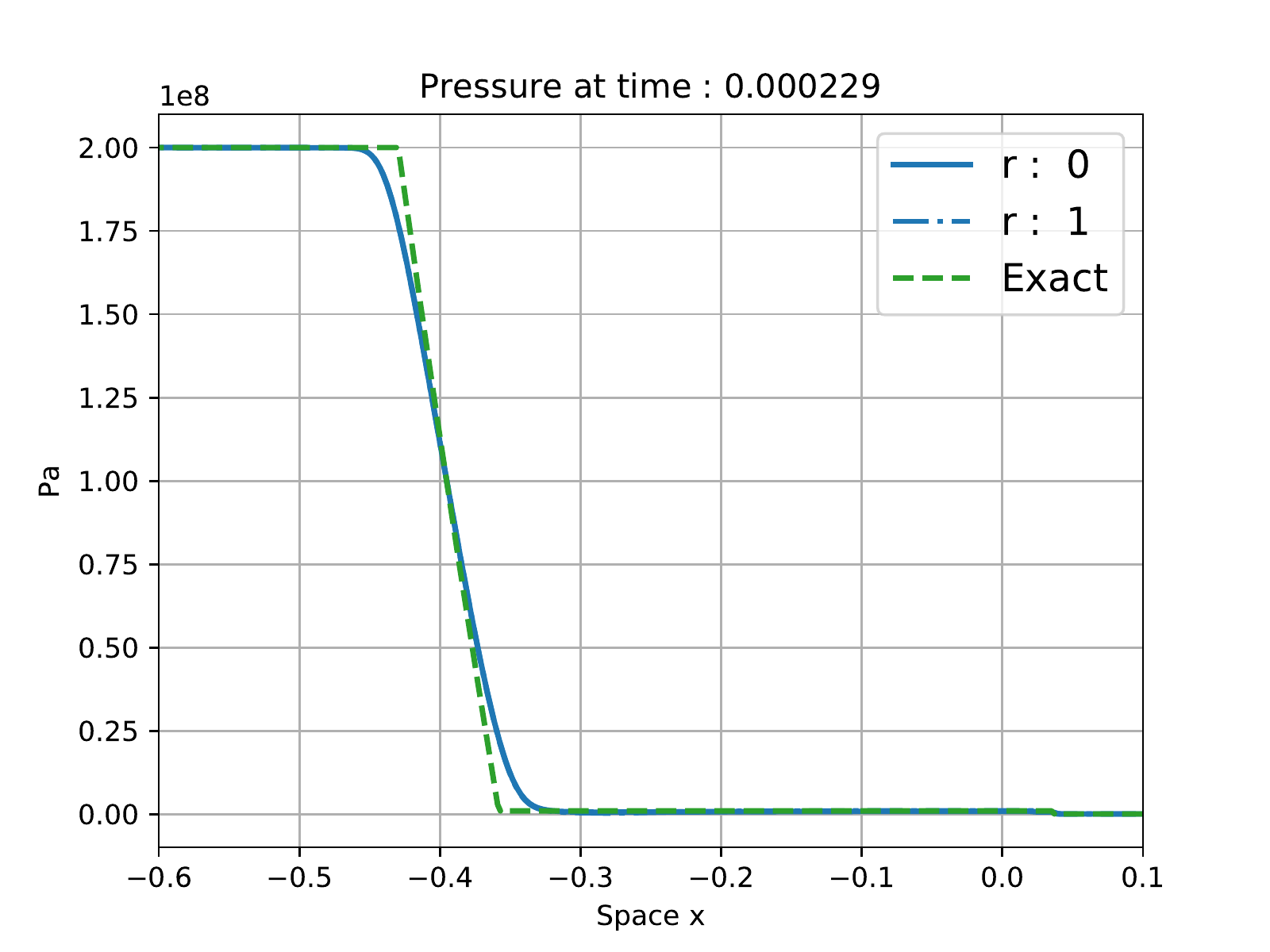}
\caption{Mixture quantities for the nearly pure phases test at time $t = 229\, \mu\mathrm{s}$, using an infinite drag coefficient. Left column : continuous limit-based relaxation strategy; right column: projection-based relaxation strategy. Numerical solutions for gas phase ($P1$) and the liquid phase ($P2$) have been computed with $M=1000$ cells for both the stratified ($r=0$) and disperse ($r=1$) regimes.}\label{Fig:T3}
\end{figure}
We aim at showing the capacity of the scheme to handle nearly pure mixtures, then the following mixture quantities of interest are computed for each regime
\begin{equation}
\rho := \alpha_1\rho_1 + \alpha_2\rho_2\\
\qquad
u := \frac{\alpha_1\rho_1u_1 + \alpha_2\rho_2u_2}{\rho}\\
\qquad
p := \alpha_1p_1 + \alpha_2p_2\\
\end{equation}
As the sum of the equations for each phase at each point location $x= x_i$ results in a formally equivalent system to the single phase Euler equations for the mixture, one can compute the corresponding exact solution according to well-known solvers \cite{Ivings,ToroRS}.
Results for both the disperse and stratified flow mixtures are compared with exact solution between pure phases in Fig. \ref{Fig:T3}. The continuous-limit relaxation strategy is used for this test case.
Again relaxed models produce analogous results, see Fig. \ref{Fig:T3}.
Notice that this should be not surprising: indeed, when considering the probability coefficients one these do not depend on the choice of $r$, in the regions of single-phase flow: let us assume that a material interface is located at $x=x_{i+\frac{1}{2}}$ for some $i$; then it holds that for any $j\neq i$ and a $k\neq l \in \lbrace 1,2\rbrace$ 
\begin{align*}
\alpha^k_j = \alpha^k_{j+1} = 1 - \varepsilon
\qquad
\textit{and}
\qquad
\alpha^l_j = \alpha^l_{j+1} = \varepsilon
\end{align*}
where $\varepsilon$ represents the virtual amount of phase $l$ used at the numerical level, and is assumed to be comparatively small, i.e. $0<\varepsilon \ll \frac{1}{2}$.
Then, one gets
\begin{align*}
\mathcal{P}_{j+\frac{1}{2}}\left[\Sigma_k,\Sigma_k\right] &= r\max(1-\varepsilon-\varepsilon,0) + (1-r)\min(1-\varepsilon,1-\varepsilon) = 1 - (1+r)\varepsilon\\
\mathcal{P}_{j+\frac{1}{2}}\left[\Sigma_k,\Sigma_l\right] &= r\min(1-\varepsilon,\varepsilon) + (1-r)\max(1-\varepsilon-1+\varepsilon,0) = r\varepsilon\\
\mathcal{P}_{j+\frac{1}{2}}\left[\Sigma_l,\Sigma_l\right] &= r\max(\varepsilon-1+\varepsilon,0) + (1-r)\min(\varepsilon,\varepsilon) = (1-r)\varepsilon\\
\mathcal{P}_{j+\frac{1}{2}}\left[\Sigma_l,\Sigma_k\right] &= r\min(\varepsilon,1-\varepsilon) + (1-r)\max(\varepsilon-\varepsilon,0) = r\varepsilon
\end{align*}
The significance of this is that, when considering the ideal case of pure phases ($\varepsilon = 0$), one obtains that the only non-zero probability coefficient is the one associated to the probability of having the same phase on both sides of an interface for the phase that has the higher-volume fraction. This corresponds to making all the twophase-fluxes contributions vanish, and the classical, single-phase Godunov scheme is recovered. Hence, each phase behaves independently of the complementary one.\\
Conversely, around the material interface located at $x = x_{i+\frac{1}{2}}$, it holds
\begin{align*}
\alpha^k_i = 1- \varepsilon\textit{ and } \alpha^k_{i+1} = \varepsilon
\qquad
\textit{and}
\qquad
\alpha^l_i = \varepsilon \textit{ and } \alpha^l_{i+1} = 1 - \varepsilon
\end{align*}
so that
\begin{align*}
\mathcal{P}_{i+\frac{1}{2}}\left[\Sigma_k,\Sigma_k\right] &= r\max(1-\varepsilon-1+\varepsilon,0) + (1-r)\min(1-\varepsilon,\varepsilon) = (1-r)\varepsilon\\
\mathcal{P}_{i+\frac{1}{2}}\left[\Sigma_k,\Sigma_l\right] &= r\min(1-\varepsilon,1-\varepsilon) + (1-r)\max(1-\varepsilon-\varepsilon,0) = 1 - (2-r)\varepsilon\\
\mathcal{P}_{i+\frac{1}{2}}\left[\Sigma_l,\Sigma_l\right] &= r\max(\varepsilon-\varepsilon,0) + (1-r)\min(\varepsilon,1-\varepsilon) = (1-r)\varepsilon\\
\mathcal{P}_{i+\frac{1}{2}}\left[\Sigma_l,\Sigma_k\right] &= r\min(\varepsilon,\varepsilon) + (1-r)\max(\varepsilon-1+\varepsilon,0) = r\varepsilon
\end{align*}
which again make vanish all the contributions not associating to finding phase $k$ and phase $l$, respectively, on each side of the interface, as one would expect.\\
Notice that such behavior is immediately broken if even negligible (but not-zero) amount of complementary phase is considered in each volume (i.e. $\varepsilon > 0$). This introduces the contribution of other fluxes terms which slightly affect the solution profile, depending on $r$.
Indeed, slight discrepancies can be seen around shocks, even if the overall performance of both models ($r=0$ and $r=1$) results acceptable and virtually equal.
The significance of the above analysis is that, the discrepancies between the two models generated by different choices of parameter $r$, are dependent on the amount of virtual phase we allocate in each pure chamber.
This in turn, also highlights the importance of moderately small disperse particles/sub-scale phenomena in determining shock profiles and corresponding jump relations.

\begin{figure}[!htbp]
\centering
\includegraphics[scale = 0.4]{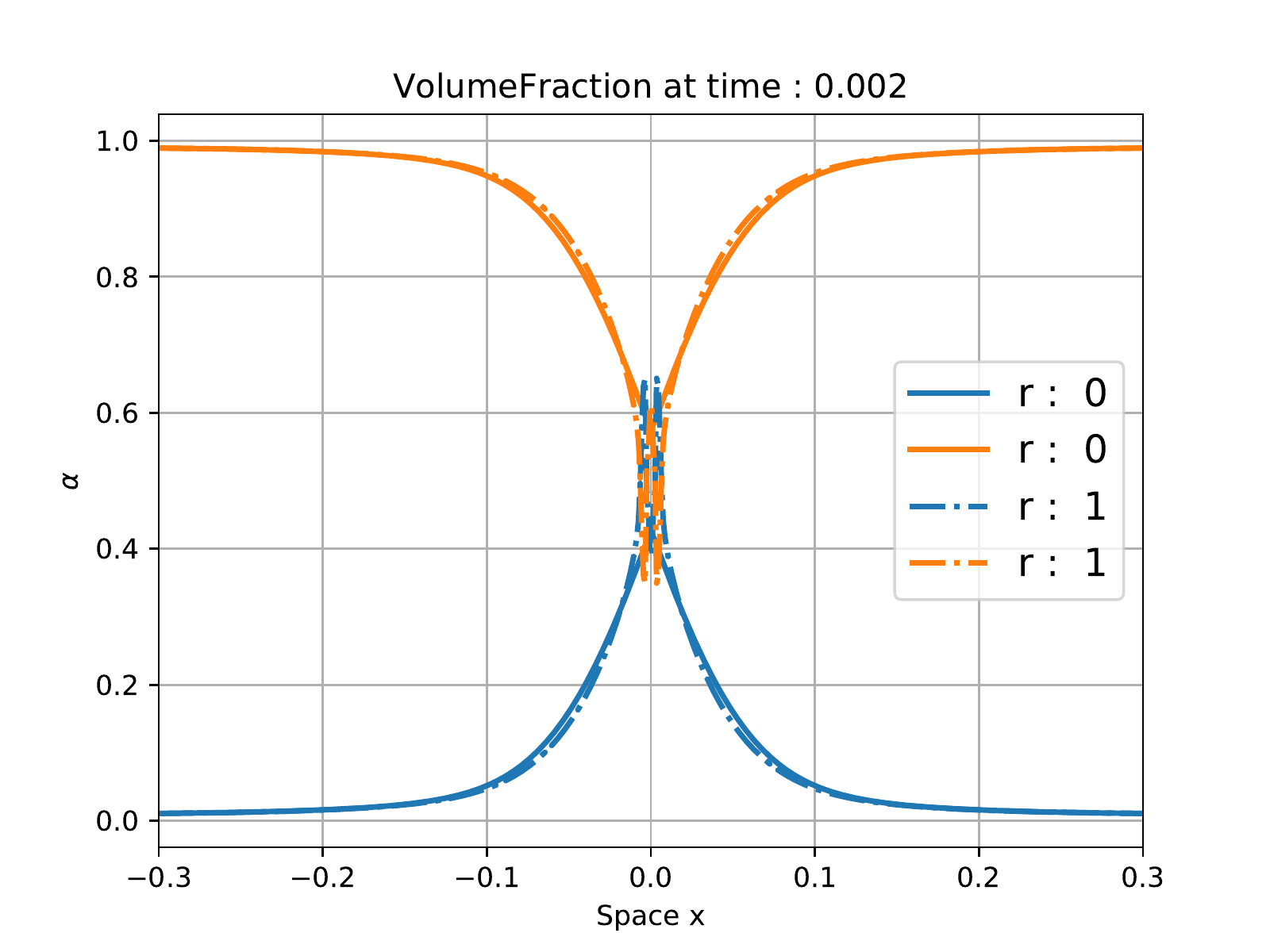}\,
\includegraphics[scale = 0.4]{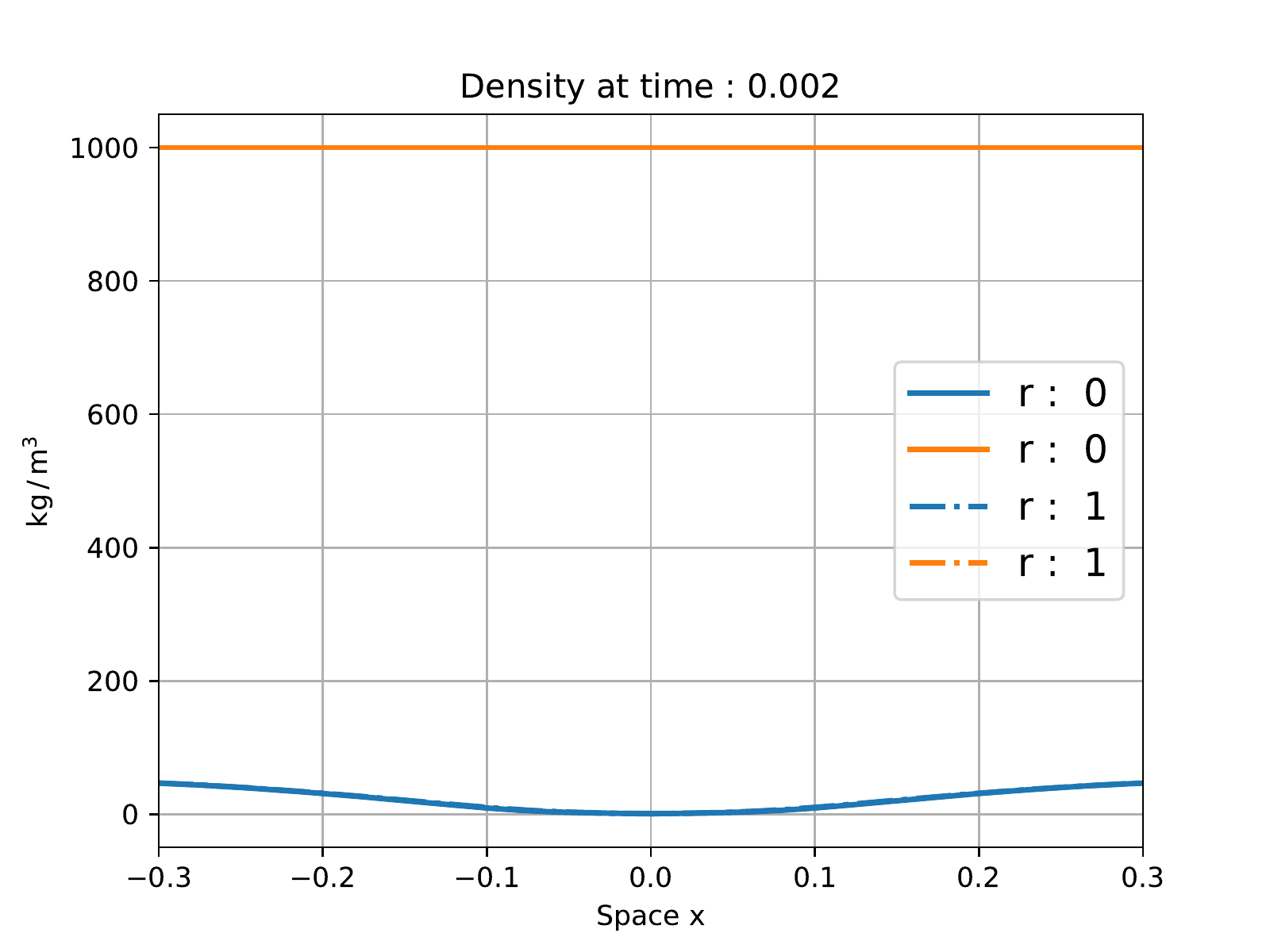}\\
\includegraphics[scale = 0.4]{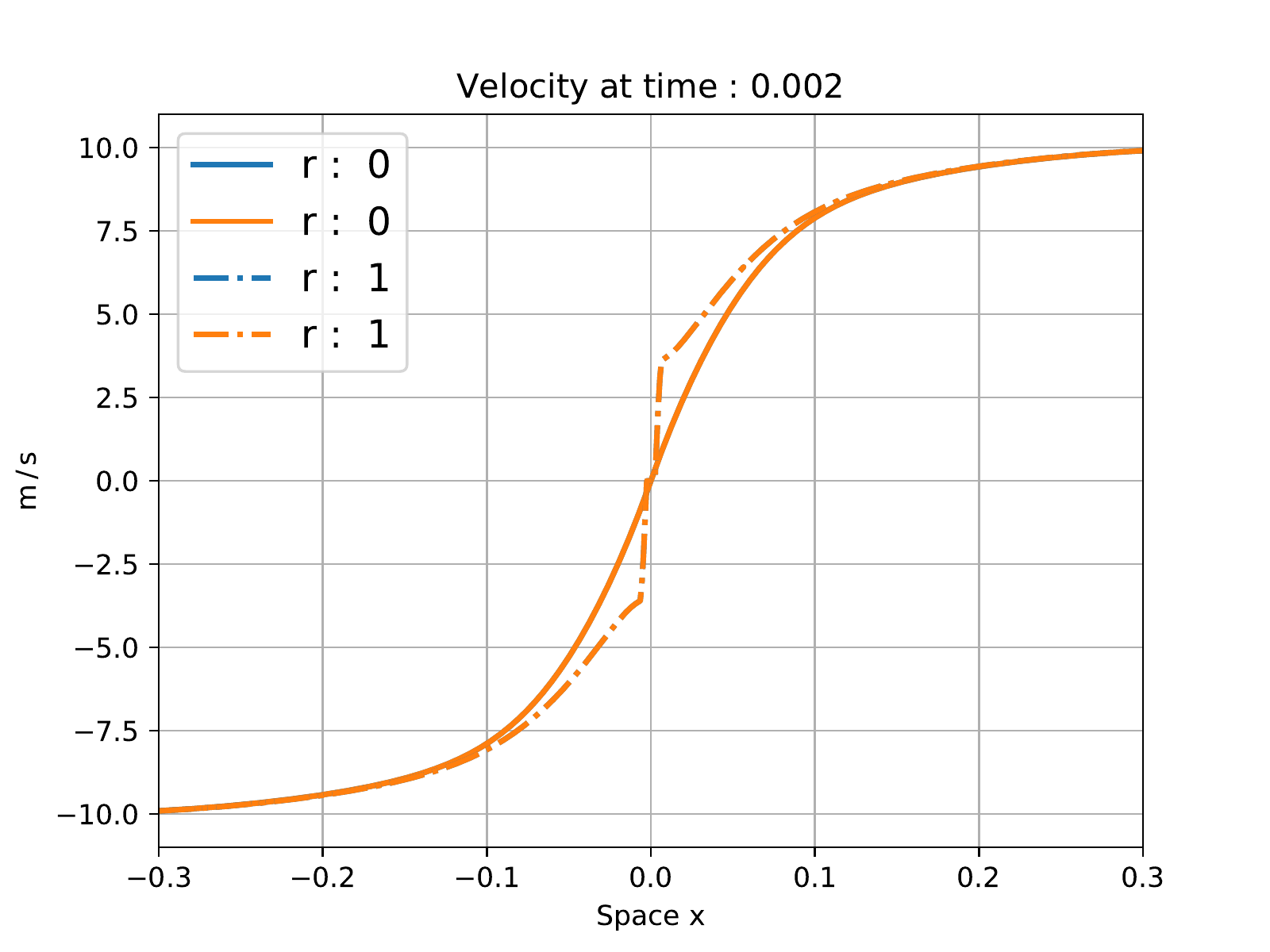}\,
\includegraphics[scale = 0.4]{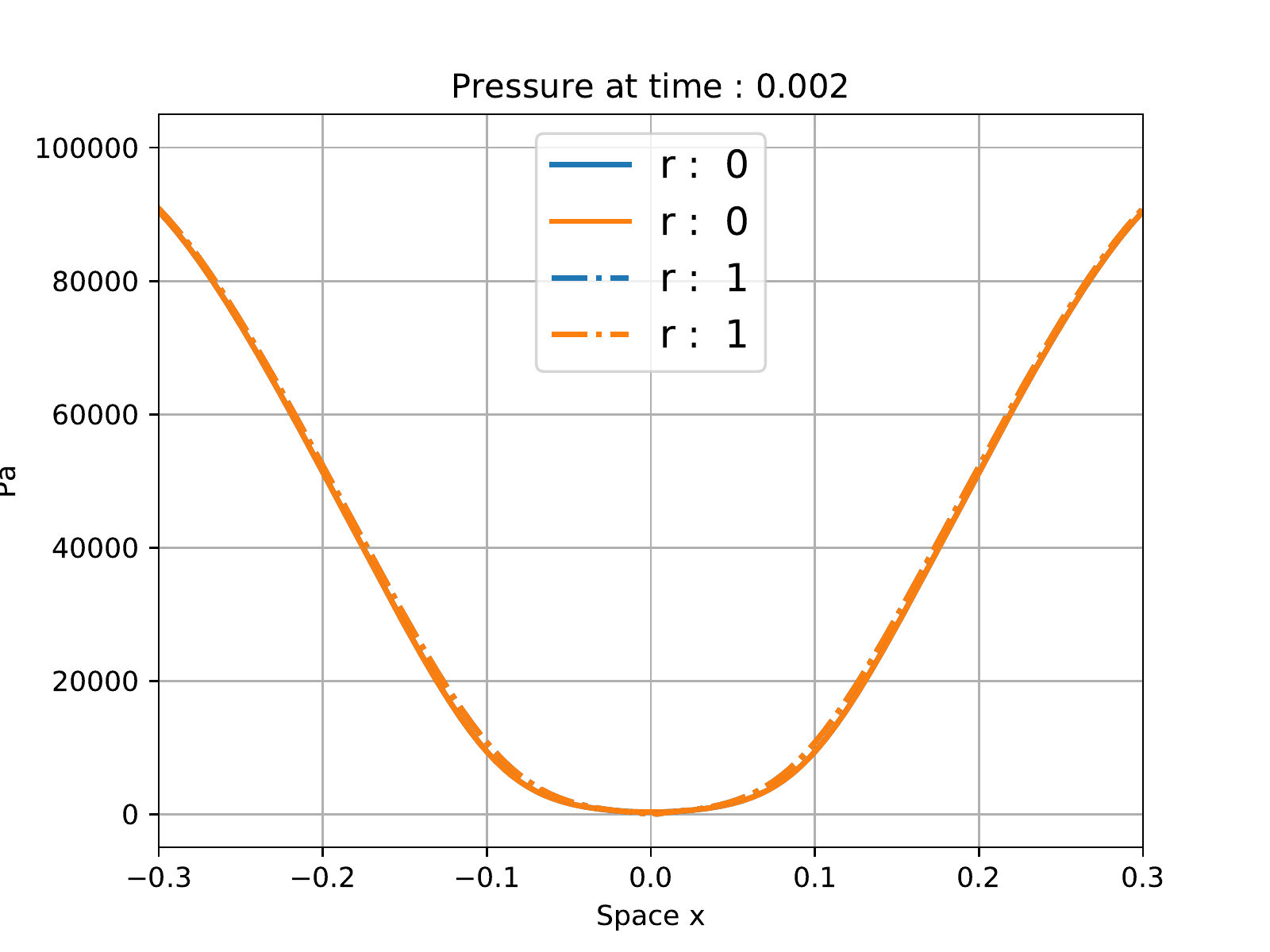}\\
\caption{Cavitation Test problem at time $t = 2\,\mathrm{ms}$ using an infinite drag force. Mixture quantities have been computed using a mesh of $M=2000$ cells for both the stratified ($r=0$) and the disperse ($r = 1$) regimes.}\label{Fig:T4}
\end{figure}

\subsection{Dynamical creation of interfaces}

In this test we examine the capability of the one parameter model to dynamically create interfaces.
We consider the following initial condition in terms of the primitive variables $\textbf{V} = [\alpha,\rho,u,p]$,
\[
\textbf{V}_0(x) = 
\begin{cases}
[\textbf{V}_L^{(1)}, \textbf{V}_L^{(2)}] & \textit{ if }\, x<0,\\
[\textbf{V}_R^{(1)}, \textbf{V}_R^{(2)}] & \textit{ if }\, x>0.
\end{cases}
\]
where
\[
\textbf{V}_L^{(1)} = \begin{bmatrix}
10^{-2}\\
50\\
-10\\
10^5
\end{bmatrix},
\quad
\textbf{V}_L^{(2)} = \begin{bmatrix}
1-10^{-2}\\
1000\\
-10\\
10^5
\end{bmatrix}
\quad
\textbf{V}_R^{(1)} = \begin{bmatrix}
10^{-2}\\
50\\
10\\
10^5
\end{bmatrix},
\quad
\textbf{V}_R^{(2)} = \begin{bmatrix}
1-10^{-2}\\
1000\\
10\\
10^5
\end{bmatrix}
\]
Results for the cavitation test case are reported in Fig. \ref{Fig:T4}, with magnified details shown in Fig. \ref{Fig:T4_m}. Here we present results only for the first relaxation procedure of Appendix \ref{appendix:RelaxationStrategies}.
Both the stratified ($r=0$) and the disperse ($r=1$) regimes are able to dynamically create interfaces, meaning that gas pockets are generated at the discontinuity position. Discrepancies in the velocity field can be appreciated around the discontinuity, like in the oscillations around the peaks of volume fractions.

\begin{figure}[!htbp]
\centering
\includegraphics[scale = 0.47]{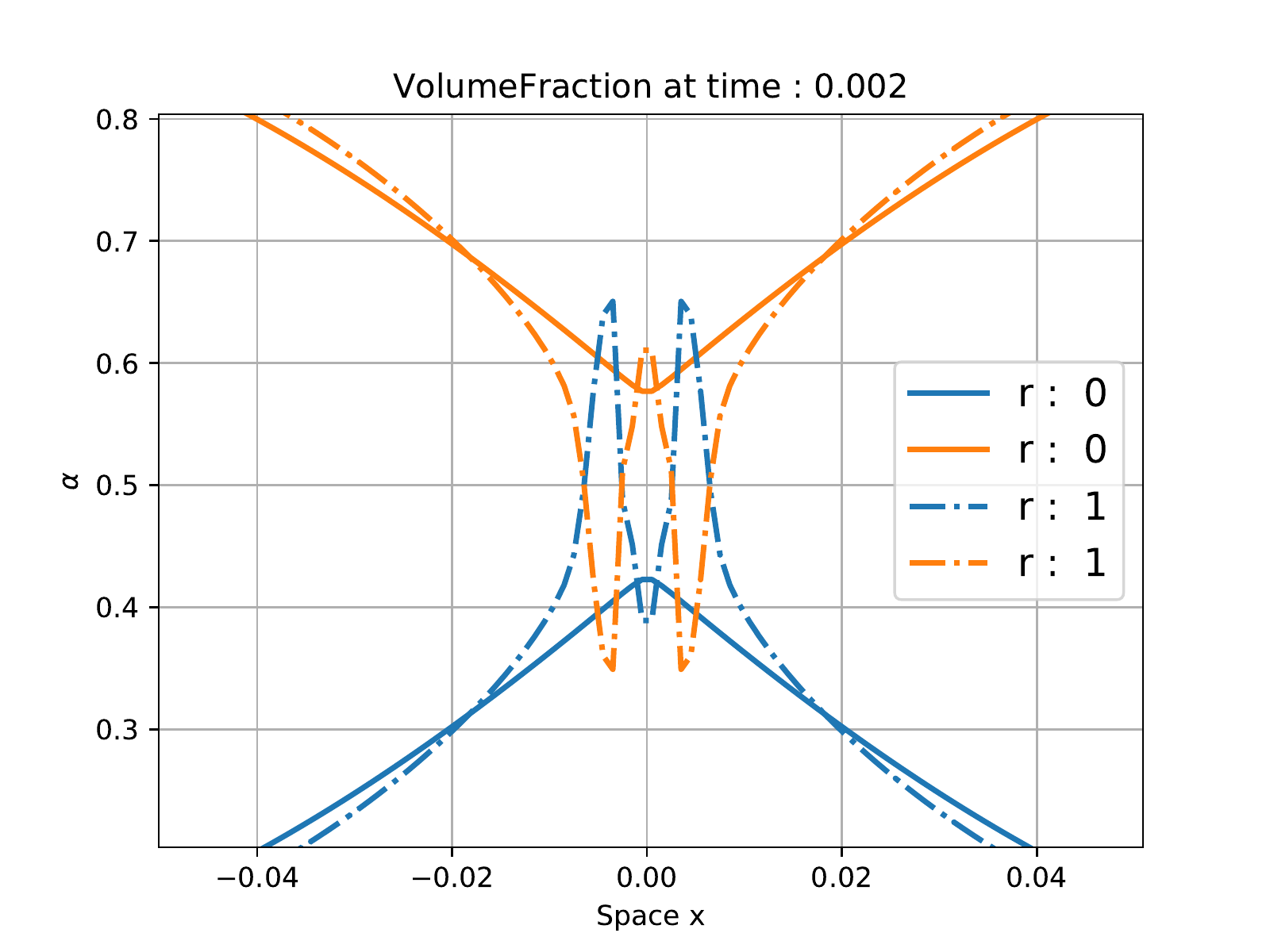}
\caption{Cavitation Test problem at time $t = 2\,\mathrm{ms}$ using an infinite drag force. Magnified details of results reported in Fig. \ref{Fig:T5}}\label{Fig:T4_m}
\end{figure}

It is worth highlighting that this test presents a moderate speed on both sides of the diaphragm. Increasing the expansion velocity (up to $u = 100\,\mathrm{m/s}$, for example) would results in computational failure. Indeed, in the present formulation no mass transfer is considered, so that the creation of gas pockets is only due to the relaxation step.

\subsection{Randomly chosen, spatially dependent regimes}

In this test we want to investigate the difference of predictions with respect to the imposition of randomly chosen $r$, and a piece-wise constant $r$. We perform test $1$, considering uniform volume fraction with first relaxation procedure and compare the results obtained with $r=0$ and $r=1$ with two constant, randomly chosen $r$ and the following piece-wise constant function
\begin{equation}
r = r(x) =
\begin{cases}
0.13 &\textit{if}\qquad -1\leq x < -0.52\\
0.47 &\textit{if}\qquad -0.52\leq x < 0.395\\
1 & \textit{if}\qquad 0.395\leq x < 0.761\\
0.69 & \textit{if}\qquad 0.761\leq x \leq 1\\
\end{cases}
\end{equation}
Results are shown in Fig. \ref{Fig:T5}: we report only details of the quantity of interest to help appreciate differences.

\begin{figure}[!htbp]
\centering
\includegraphics[scale = 0.4]{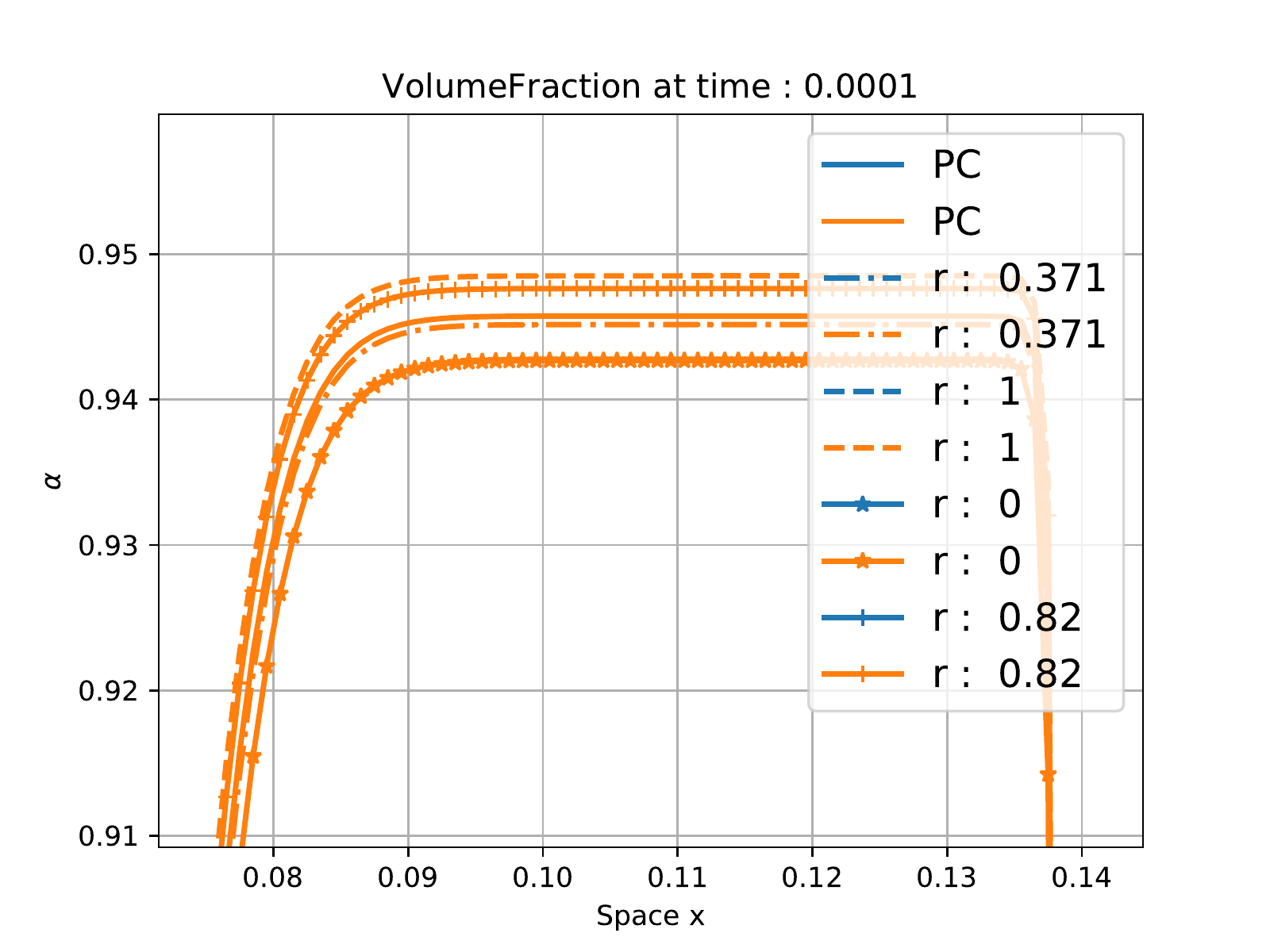}\,
\includegraphics[scale = 0.47]{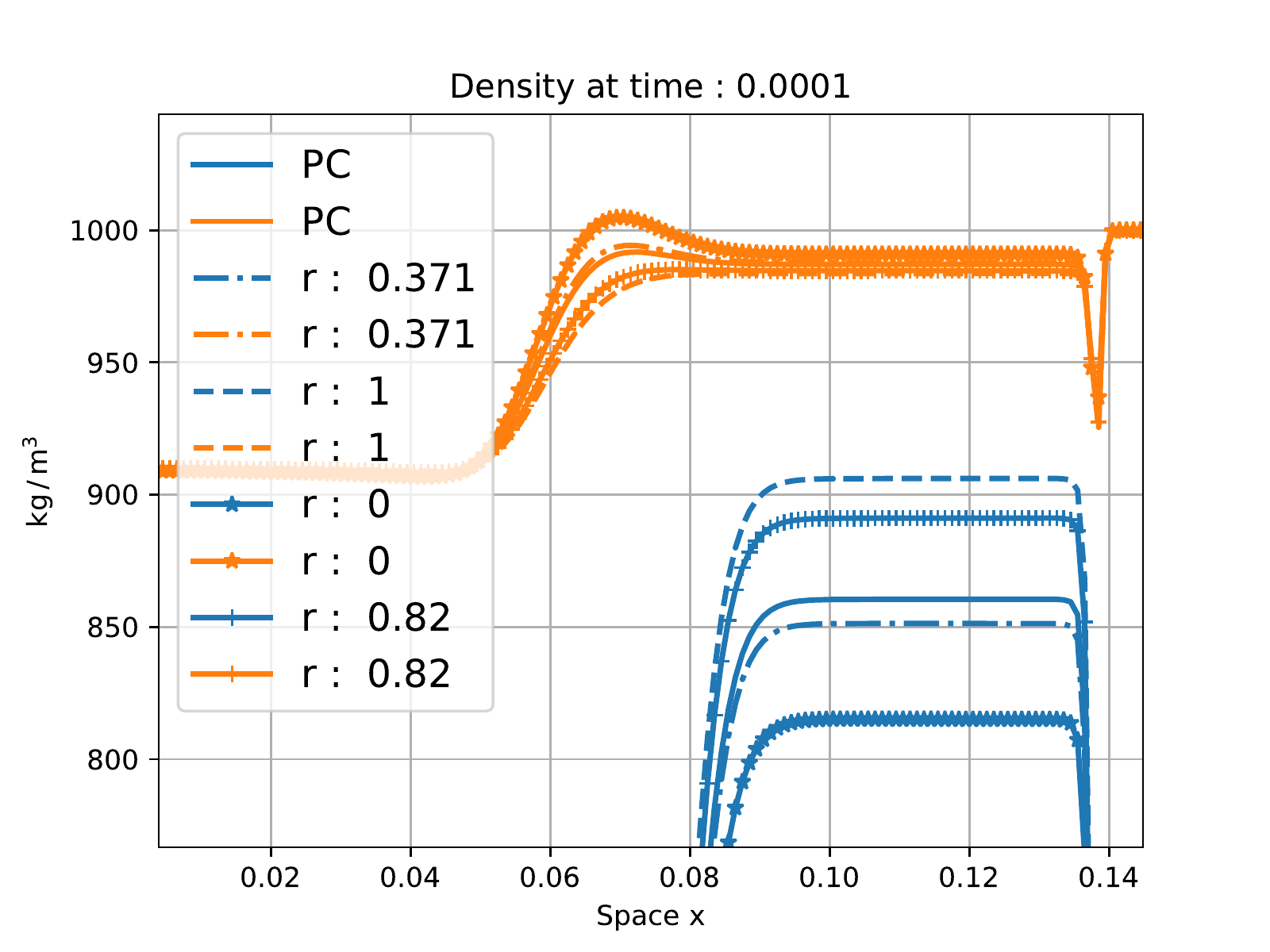}\\
\includegraphics[scale = 0.47]{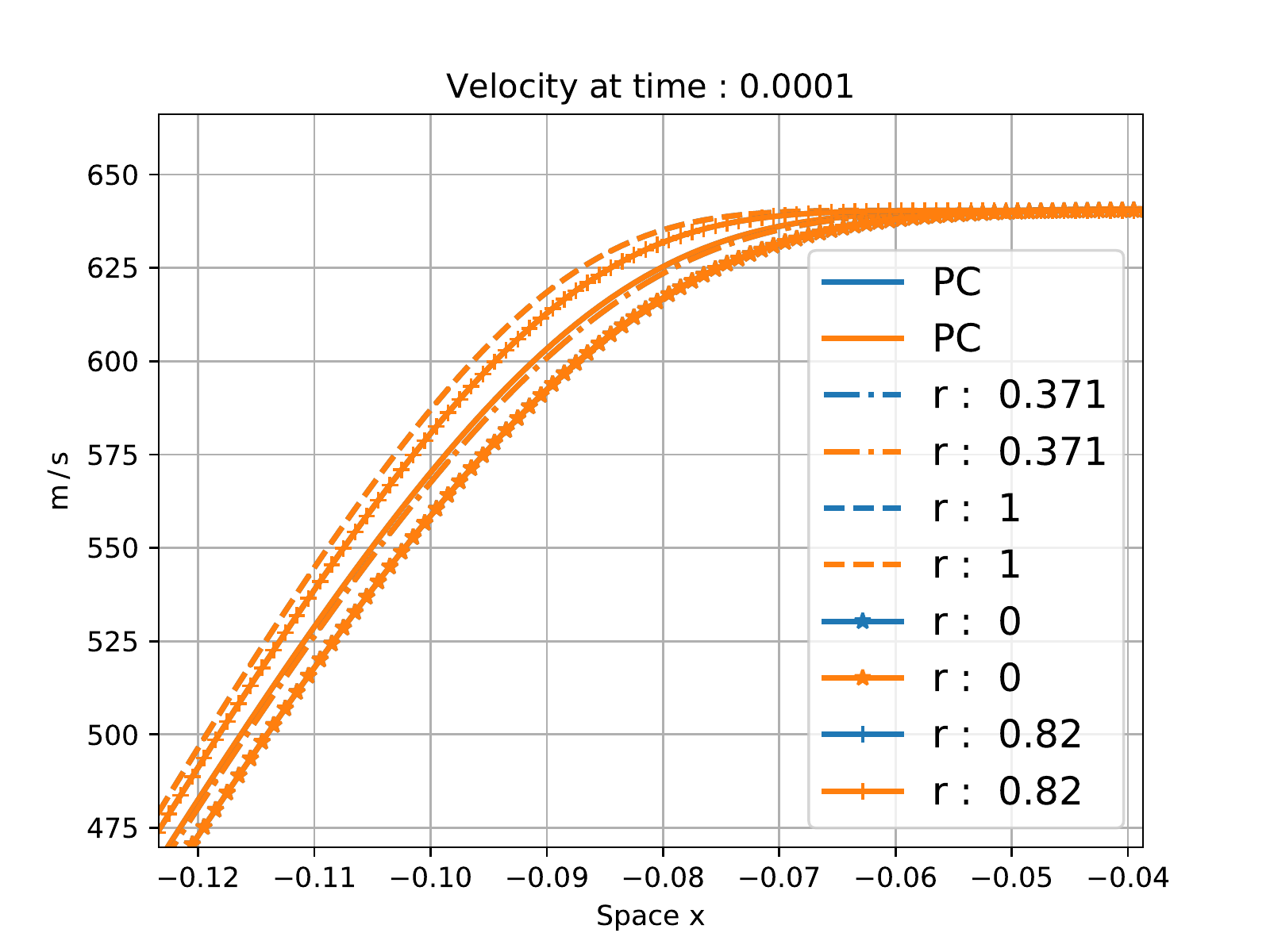}\,
\includegraphics[scale = 0.47]{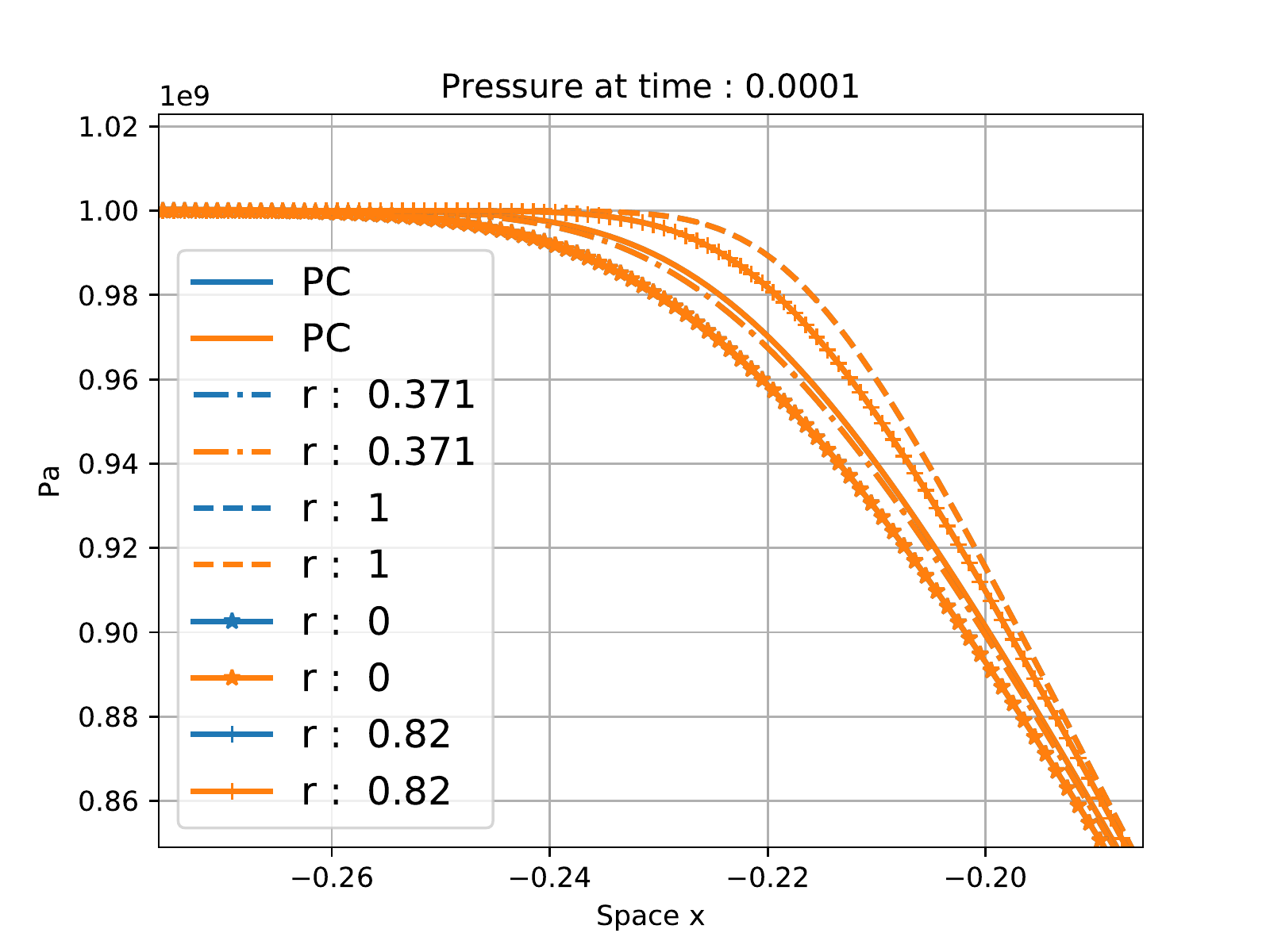}\\
\caption{Uniform volume fraction test at time $t = 100\, \mu\mathrm{s}$ of different flow regimes: fixed topology $r$ and piece-wise constant $r\, :\,\mathrm{PC}$. Numerical solutions for gas phase ($P1$) and the liquid phase ($P2$) computed with $M=2000$ number of cells are reported: volume fractions rarefactions (top left), densities shocks and rarefactions (top right), velocity shocks (bottom left) and pressure rarefactions (bottom right).}\label{Fig:T5}
\end{figure}

This test yields numerical evidence to understand the impact of the choice of the parameter $r$ on the corresponding numerical approximations. Firstly, one can recognize that solutions display smooth transition as $r$ is increased, when constant throughout space. Notice that the same conclusion carries directly to the piece-wise constant function: depending on the domain of interest, the solution generated by the piecewise constant $r$ lies between the ones computed with constant values, underlying the local dependency of the corresponding solutions.
 
\subsection{Dense-to-dilute transition}

In this test we aim at investigating the dependency of solutions with respect to variations in the parameter $r$. Indeed, so far, only spatially constant cases of the parameter $r$ have been considered; here we extend such results to space-time varying functions.\\ 
Firstly, for the sake of comparison, we fix the same initial condition of Test $1$ (i.e. the uniform volume fraction test case), imposing initial stratified flow ($r=0$). For each subsequent time level $t^n$, each of the interfacial regime is modeled updating $r_{i+\frac{1}{2}}^n$ by perturbations of a randomly chosen slight amount. More precisely, for each time level $t^n$ with $n=1, \ldots N$ and at each interface $x=x_{i+\frac{1}{2}}$ do
\begin{enumerate}
\item Produce a uniformly distributed pseudo random number $q(\omega_{i+\frac{1}{2}}^{n})$ between $-1$ and $1$, i.e. $q\sim \mathrm{Unif}[-1,1]$;
\item Perturb the previous flow regime associated to $r^{n-1}_{i+\frac{1}{2}}$ according to
\begin{equation}\label{eps}
r_{i+\frac{1}{2}}^{n,+} = r_{i+\frac{1}{2}}^{n-1} + \epsilon \cdot u(\omega_{i+\frac{1}{2}}^{n})
\end{equation}
for a sufficiently small $\epsilon$.
\item Narrow $r_{i+\frac{1}{2}}^{n,+}$ to the domain of interest:
\begin{equation}
r_{i+\frac{1}{2}}^{n} = f\left(r_{i+\frac{1}{2}}^{n,+}\right) = \begin{cases}
0 &\textit{if}\quad r_{i+\frac{1}{2}}^{n,+}<0 \\
r_{i+\frac{1}{2}}^{n,+} & \textit{if}\quad r_{i+\frac{1}{2}}^{n,+}\in [0,1]\\
1 & \textit{if}\quad r_{i+\frac{1}{2}}^{n,+}>1
\end{cases}
\end{equation}
\end{enumerate}

\begin{figure}[!htbp]
\centering
\includegraphics[scale=0.47]{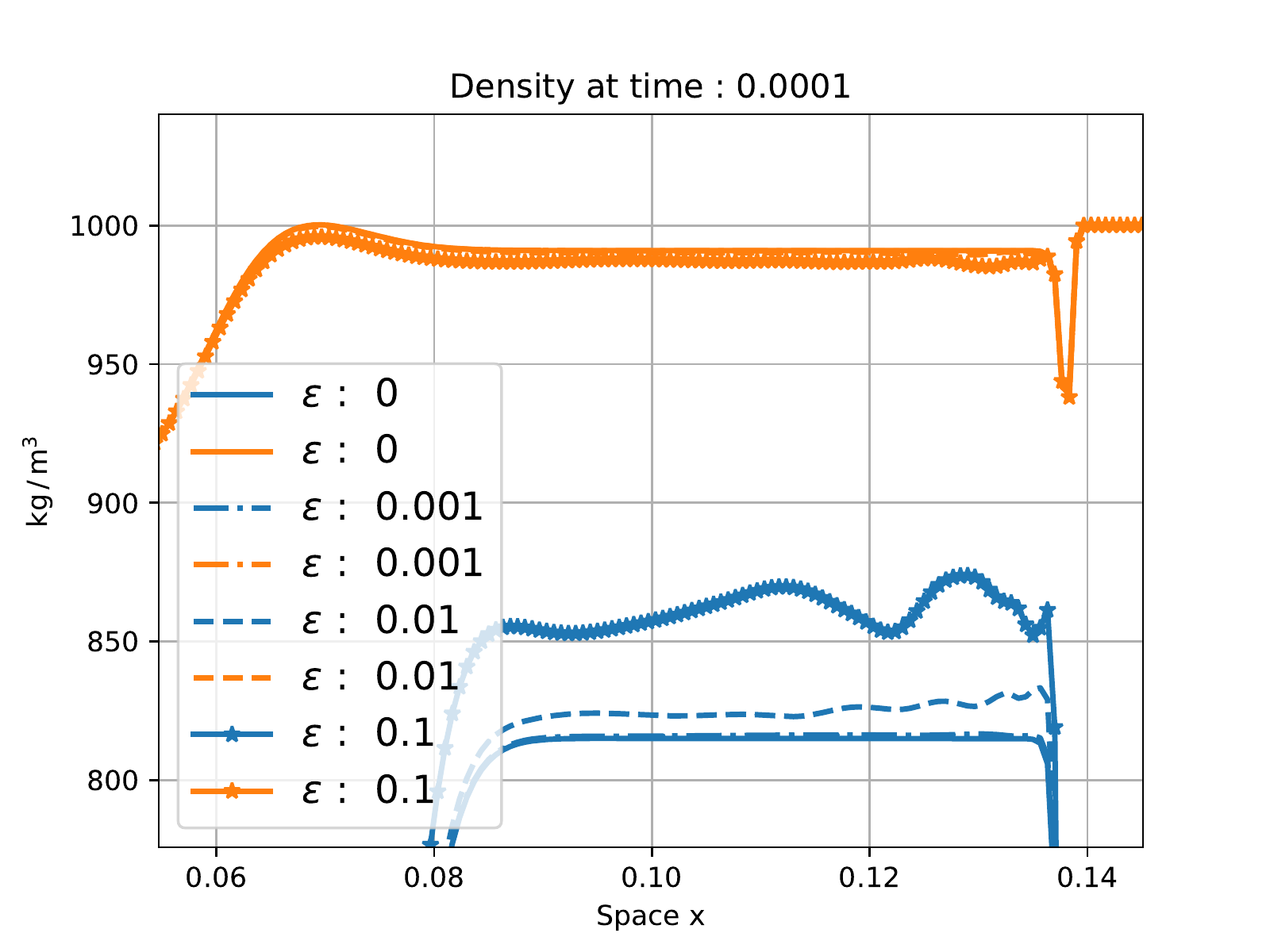}\,
\includegraphics[scale=0.47]{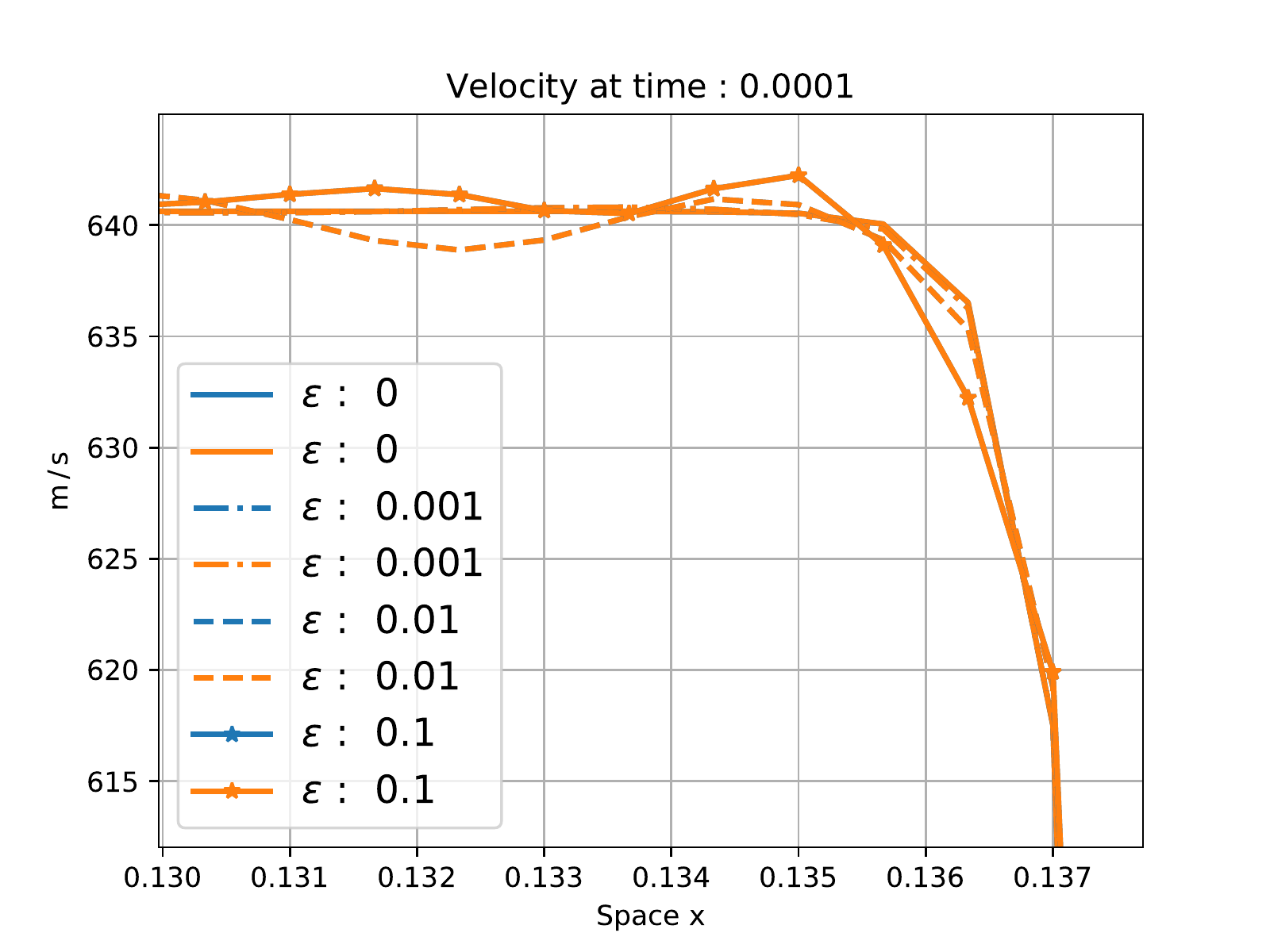}\\
\includegraphics[scale=0.47]{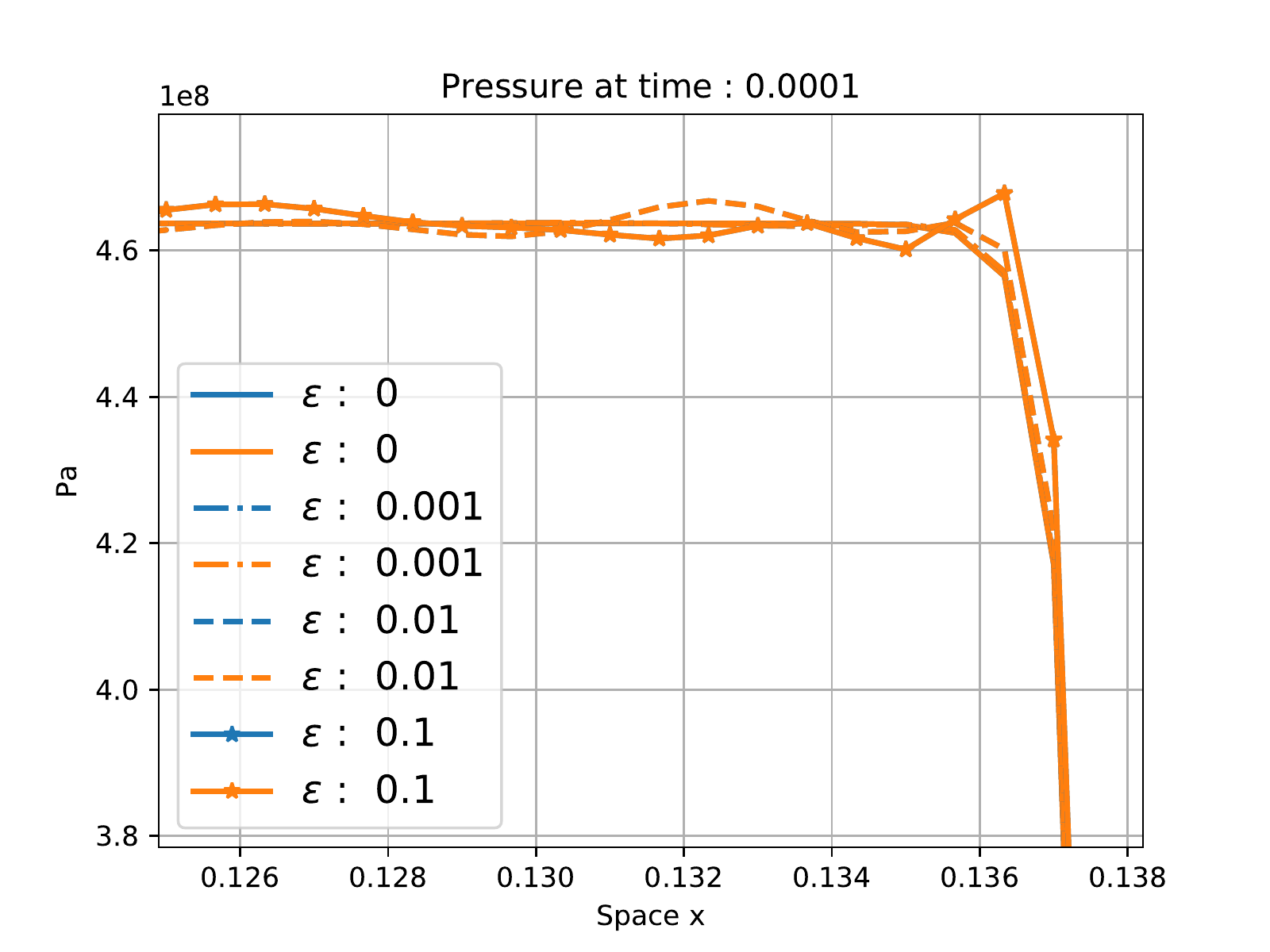}
\caption{Dense-to-dilute transition test at time $t = 100\, \mu\mathrm{s}$. Numerical solutions have been computed using a uniform mesh of $M=3000$ cells, using different values of $\epsilon$ in (\ref{eps}). Magnified details of results are displayed distinguishing the gas ($\mathrm{P1}$) and the liquid ($\mathrm{P1}$) phase, as well as the different values of $\epsilon$.}\label{Fig:T6}
\end{figure}

Once all the newly generated $r_{i+\frac{1}{2}}^n$ are computed, one updates the solution by utilizing the strategy designed in Section \ref{sec:NumStrat}. The parameter $\epsilon$  encodes the rate of dense-to-dilute transition: negligible values of the parameter $\epsilon$ will produce virtually same results of the stratified flow case, whereas excessively large values will produce discontinuous flow transition. We present in Fig. \ref{Fig:T6} only magnified regions of the approximate solutions: 
results are analogous to the ones reported in Fig. \ref{Fig:T3}, with oscillations at post-shock states.
Notice how moderately small values of $\varepsilon$ do yield virtually same results as constant $r\equiv 0$. As suggested by Proposition \ref{prop:r}, a convergence towards the stratified flow ($r \equiv 0$) can be appreciated as $\epsilon$ decreases, hence demonstrating the smooth dependency at any time of computed solutions with respect to the parameter $r$.\\
However, the most interesting result of this test is the oscillatory effect appearing for sufficiently large values of $\epsilon$. In order to further investigate such oscillatory effect appearing near discontinuities, we run the same test, increasing $\epsilon$ and comparing results with a uniformly randomly chosen $r_{i+\frac{1}{2}}^n$. Corresponding results are shown in Fig. \ref{Fig:T7}. Oscillatory effects appear near discontinuities which propagate to adjacent regions. As expected, increasing the value of $\varepsilon$ will produce with higher probabilities results oscillating around $r\equiv 1$. Corresponding numerical solutions show indeed variations around the mentioned value $r\equiv 1$.

\begin{figure}[!htbp]
\centering
\includegraphics[scale=0.45]{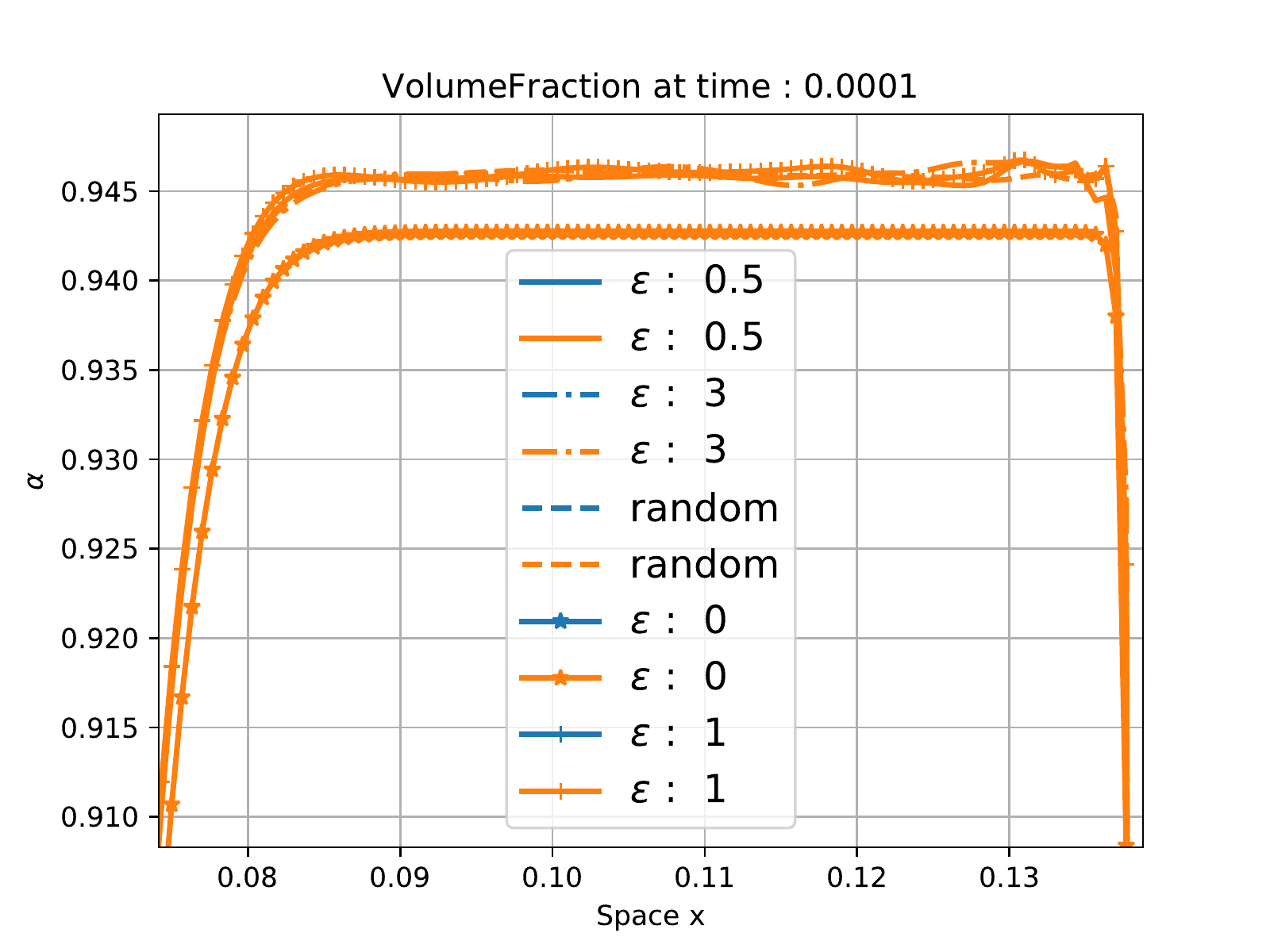}\,
\includegraphics[scale=0.45]{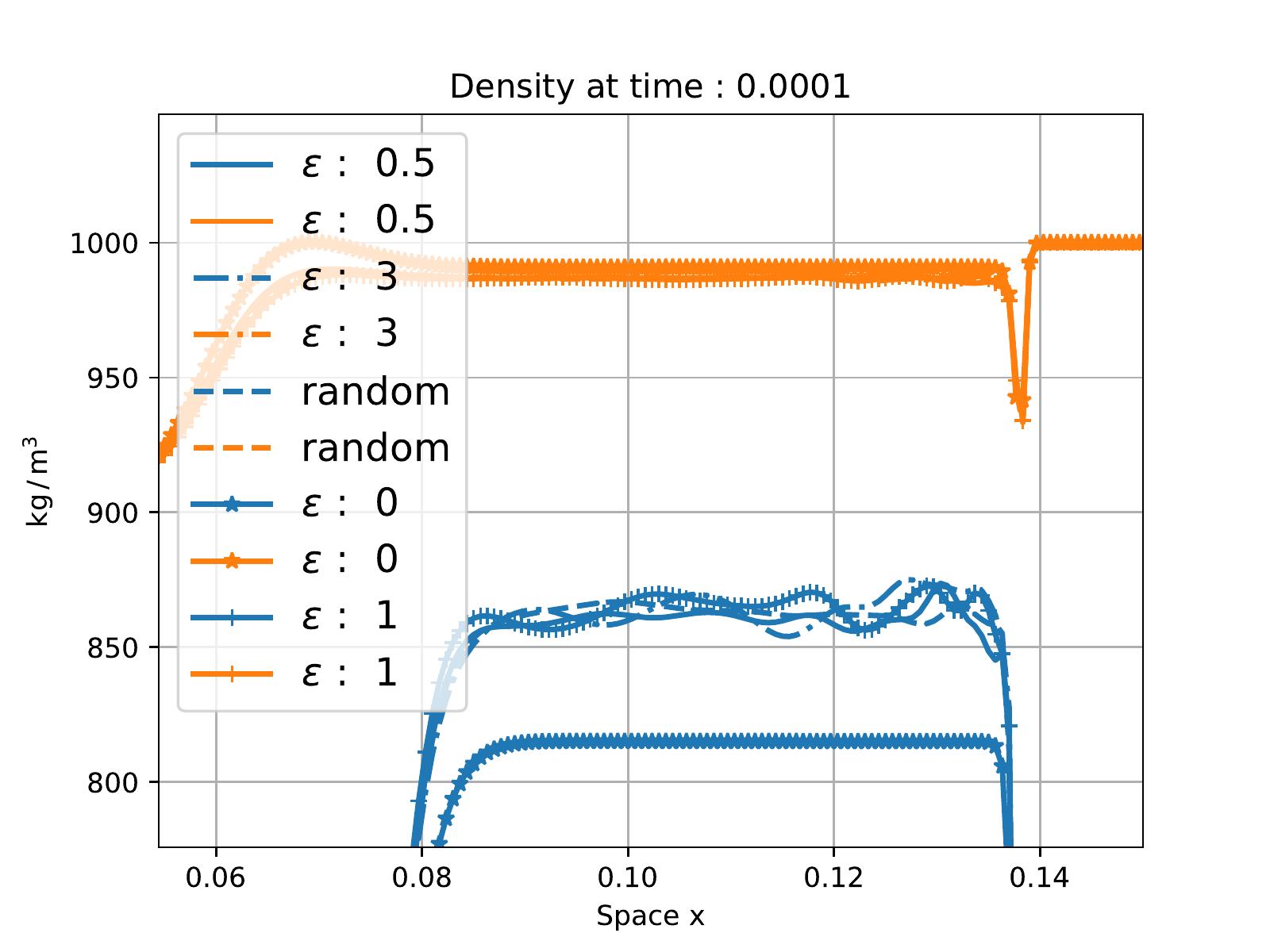}\\
\includegraphics[scale=0.45]{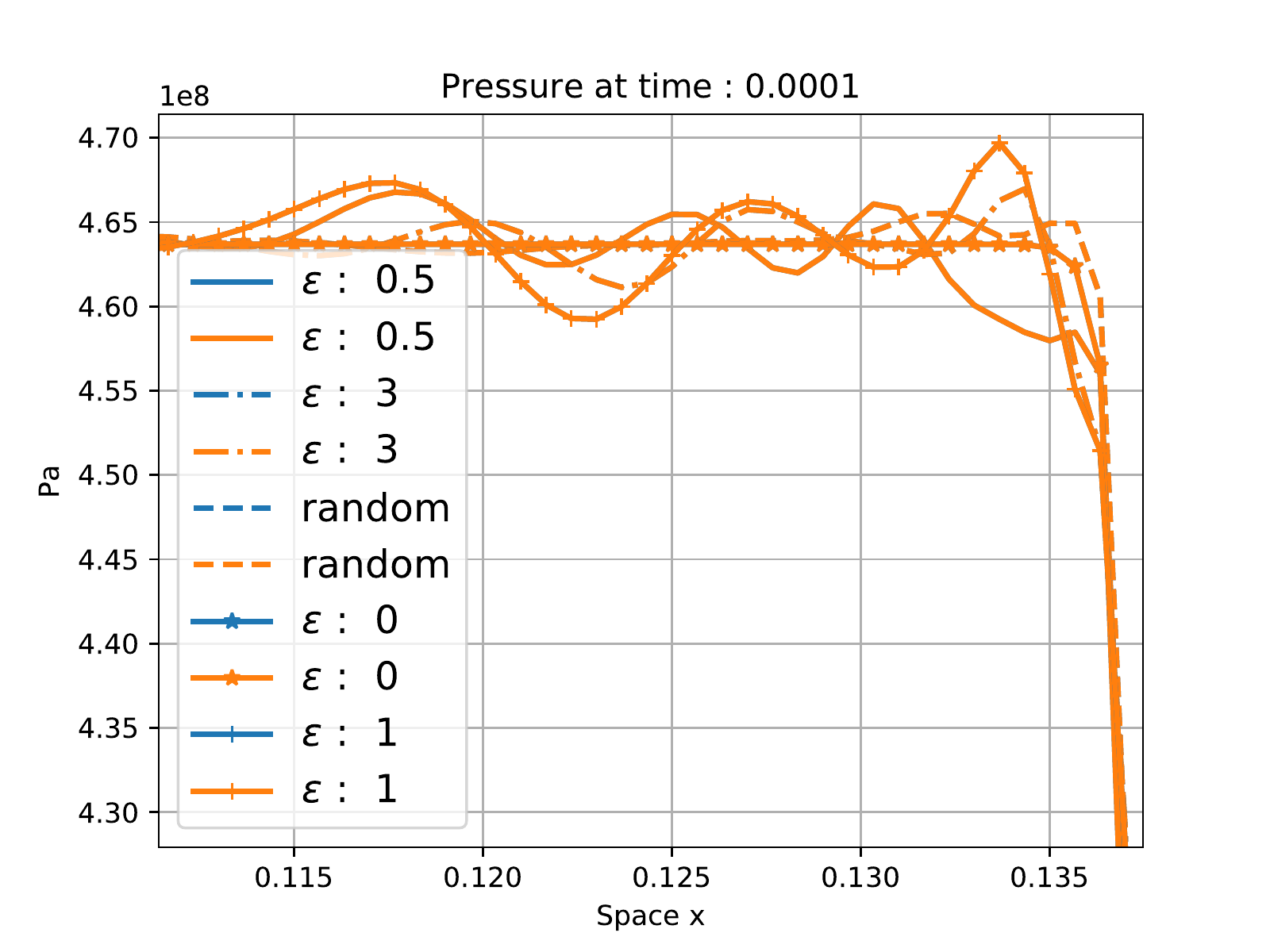}\,
\includegraphics[scale=0.45]{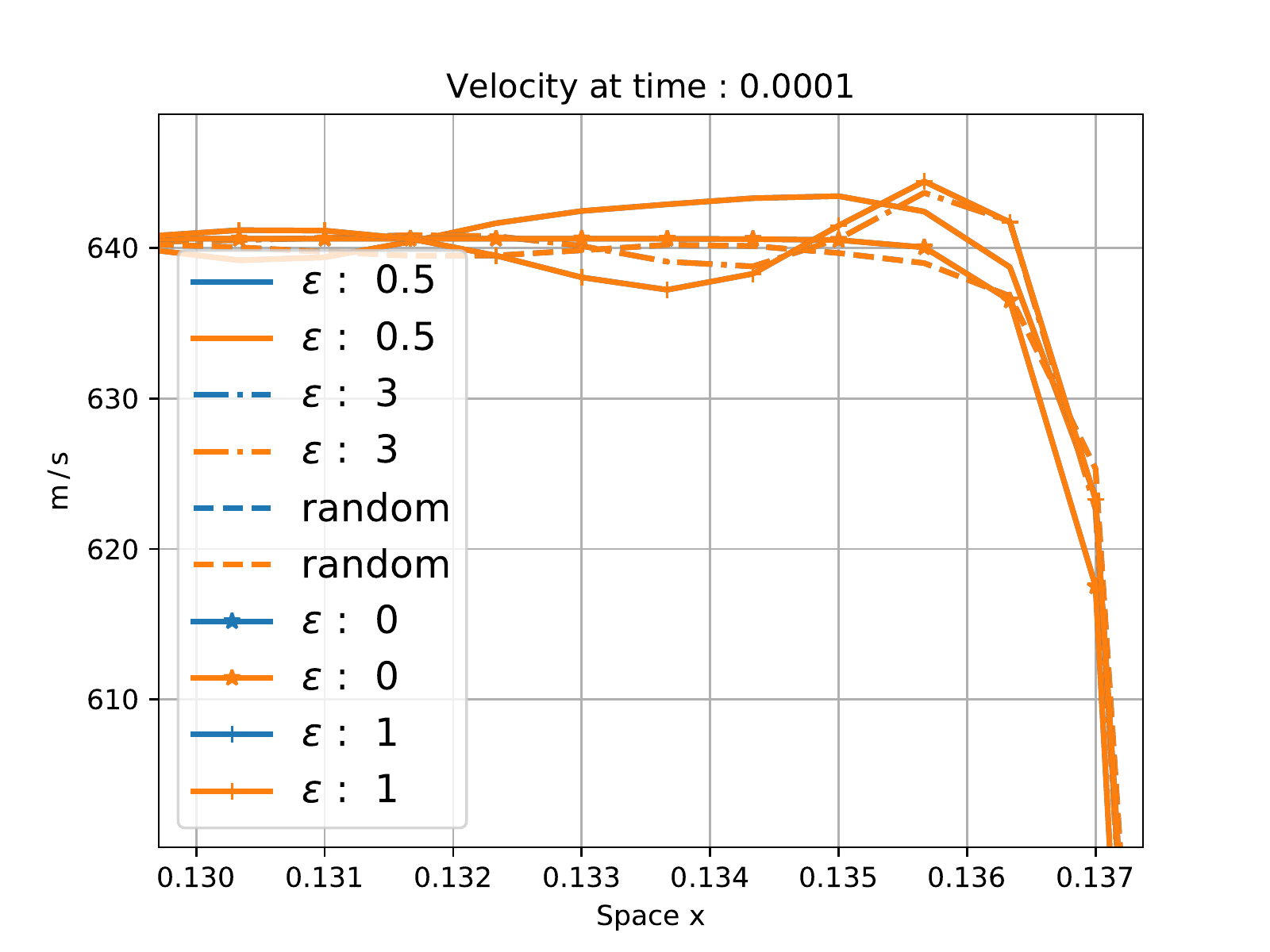}
\caption{Dense-to-dilute transition test at time $t = 100\, \mu\mathrm{s}$. Numerical solutions have been computed using a uniform mesh of $M=3000$ cells, using different values of $\epsilon$ in (\ref{eps}). Magnified details of results are displayed distinguishing the gas ($\mathrm{P1}$) and the liquid ($\mathrm{P1}$) phase, as well as the different values of $\epsilon$.}\label{Fig:T7}
\end{figure}

\section{Discussion}
The modeling of multi-phase flow is very challenging, given the range of scales as well as type of distinct flow regimes that one encounters in this context. We revisit the discrete equation method (DEM) for two-phase flow in the absence of heat conduction and mass transfer. As DEM is based on an ensemble averaging of flow realizations, the mean flow has the potential to describe different two-phase flow regimes. Our starting point was the derivation of Abgrall and Saurel \cite{Abgrall&Saurel} where the authors proposed a DEM for two-phase flow. Our main contributions in this paper was to carefully analyze the resulting probability coefficients and to prove local convexity for them. This rigorously establishes that this version of DEM can indeed model different flow regimes ranging from the disperse to stratified (or separated) flow. Moreover, we reformulated the the resulting mesoscopic model in terms of an one-parameter family of PDEs that interpolates between different regime. The limit cases of this parameter correspond to disperse and stratified flow, respectively. Furthermore, two sets of relaxation procedures were also proposed to enforce relaxation to equilibrium. 

We presented extensive numerical experiments to describe the capabilities as well as limitations of the proposed DEM. First, we demonstrated that different values of the probability coefficients yield different predictions on the mechanical interaction between phases. Indeed, it is the probability coefficients, rather than relaxation terms, that lead to this behavior, rather than the details of the relaxation terms which serve to enforce thermodynamic constraints. Indeed, the interaction of phases demonstrated through numerical tests \emph{without relaxation procedures}, suggest mechanical exchange even if no sub-particles are present inside each volume, in contrast to the interpretation of \cite{Abgrall&Saurel}. This point of view clearly brings out the complimentary roles played by the probability coefficients and relaxation terms in DEM. 

The proposed formulation also brings out possible limitations of the DEM approach. In particular, we show that an infinite number of possible models can be constructed, resulting in a ill-posed procedure to construct multiphase simulations. Although several works have investigated the mechanical/thermodynamical consistency of the continuous limit associate to stratified flow, proving it to lead to physically meaningful models. However, even under such an ansatz, the DEM method requires the relaxation operator to be added manually assuming either an infinite drag force or an estimate of the interfacial area in each cell. These latter may become problematic to obtain, if possible, without making any assumption on the flow regime. Indeed, among the desiderata for multiphase flow simulations, the avoidance of user-specification of the flow regime is paramount. 

We observe that the DEM scheme represents a finer level of description as compared to the continuum theory approach. In turn, such strategy achieves extensive modelling capabilities, ranging from stratified to disperse flows. However, as demonstrated in this paper, such a mesoscopic approach is not yielding a fully-determined system of constitutive equations as neither equilibrium states nor probability coefficients are uniquely defined. Indeed, this is due to the determination of mean flow variables whereas information about the underlying microstructure is lost. 
Such inherent under-determination does not rendering the model invalid, but it rather requires closure conditions to be supplied. 
This is equivalent to saying that the microstructure details lost in the passage to the ensemble averages has to be recovered from somewhere, which in the case of the DEM, is embodied in the probability coefficients and in the relaxation terms. In other words, one needs to adapt the free parameters according to the flow topology, but contrarily to other approaches, it is easy to see where new inputs must be supplied.

Moreover, from a mathematical point of view, the underlying probability coefficients in our DEM scheme can be interpreted as two-point correlation measures. The resulting conclusion is that the DEM approach lacks information about correlation measures, at each space-time location. Therefore, it is out belief that the point of view of measures should be preferred over classical weak forms. Indeed, many recent works dealing with numerical approximations of turbulent flow have shown success of weaker notions than the usual distributional sense. 
Notice that such an approach is in principle also capable to deal with non-conservative products, typically featured by most well-known two-phase models.

Our results also show that the form of the relaxation variety does not depend on the underlying solver for the hyperbolic step, thus suggesting that a characteristic feature of such phenomena is the determination of the speed at which they reach equilibrium.

Such an insight confirms that the essence of multiphase fluids lies in their microstructure, which \emph{has} to be considered in order to characterize mean flow variables. For this reasons, forthcoming papers aim at including such information in the modeling of multiphase flow.

\newpage
\appendix

\section{Proof of local convexity}
\label{appendix:Proof}

We start the proof of our main result Theorem \ref{Thm:Convex} by reporting the following trivial fact: for any $a,b\in\mathbb{R}$ it holds 
\begin{equation}\label{fact}
\max(a-b,0) + \min(a,b) = a
\end{equation}

Furthermore, we will adopt the notation $p\neq q \in \lbrace 1,2\rbrace$.

\begin{proof}(\textit{of Proposition \ref{Prop:NewProbs}})\\
Notice that
\begin{equation*}
\Bigg\lbrace X^{(p)}(x_{i+\frac{1}{2}}^-) = 1, X^{(p)}(x_{i+\frac{1}{2}}^+) = 0 \Bigg\rbrace = \Bigg\lbrace X^{(p)}(x_{i+\frac{1}{2}}^-) = 1\Bigg\rbrace \cap \Bigg \lbrace X^{(p)}(x_{i+\frac{1}{2}}^+) = 0 \Bigg \rbrace
\end{equation*}
Hence,
\begin{align*}
\mathcal{P}_{i+\frac{1}{2}}\left[\Sigma_p,\Sigma_q\right] &= \mathcal{P}_{i+\frac{1}{2}}\left[X^{(p)}(x_{i+\frac{1}{2}}^-) = 1, X^{(p)}(x_{i+\frac{1}{2}}^+) = 0\right]\\
&\leq \mathcal{P}_{i+\frac{1}{2}}\left[X^{(p)}(x_{i+\frac{1}{2}}^-) = 1\right] = \mathcal{P}_{i+\frac{1}{2}}\left[\Sigma_{p},\Sigma_{p}\right] + \mathcal{P}_{i+\frac{1}{2}}\left[\Sigma_{p},\Sigma_{q}\right] \,\myeq{(\ref{Condi})}\, \alpha_{i}^p 
\end{align*}
Similarly,
\begin{align*}
\mathcal{P}_{i+\frac{1}{2}}\left[\Sigma_p,\Sigma_q\right] \leq \mathcal{P}_{i+\frac{1}{2}}\left[X^{(p)}(x_{i+\frac{1}{2}}^+) = 0\right] = \mathcal{P}_{i+\frac{1}{2}}\left[\Sigma_{q},\Sigma_{q}\right] + \mathcal{P}_{i+\frac{1}{2}}\left[\Sigma_{p},\Sigma_{q}\right] \,\myeq{(\ref{Condi+1})}\, \alpha_{i+1}^q 
\end{align*}
Thus, condition (\ref{Condmin2}) follows. In turn, this latter into (\ref{Condi}) yields (\ref{Condmax2}):
\begin{equation*}
\mathcal{P}_{i+\frac{1}{2}}\left[\Sigma_p,\Sigma_p\right] \geq \alpha_i^p - \min(\alpha^p_i,\alpha^q_{i+1})\, \myeq{(\ref{fact})}\, \max\left(\alpha^p_{i}-\alpha^q_{i+1},0\right)
\end{equation*}
Finally, let us prove that, under the saturation condition (\ref{vol}), the probability pair 
$$\mathfrak{P}^1_{i+\frac{1}{2}}=\Bigg (\mathcal{P}^1_{i+\frac{1}{2}}\left[\Sigma_p,\Sigma_p\right],\mathcal{P}^1_{i+\frac{1}{2}}\left[\Sigma_p,\Sigma_q\right]\Bigg):=\Bigg( \max\left(\alpha^p_{i}-\alpha^q_{i+1},0\right), \min\left(\alpha^p_i,\alpha^q_{i+1}\right) \Bigg)$$
verify (\ref{ConsistencyConds}). Indeed,
\begin{itemize}
\item \underline{Condition (\ref{Condi})} : 
\begin{align*}
&\mathcal{P}^1_{i+\frac{1}{2}}\left[\Sigma_p,\Sigma_p\right] + \mathcal{P}^1_{i+\frac{1}{2}}\left[\Sigma_p,\Sigma_q\right] = \max\left(\alpha^p_{i}-\alpha^q_{i+1},0\right) + \min\left(\alpha^p_i,\alpha^q_{i+1}\right) \myeq{(\ref{fact})} \alpha^p_{i}
\end{align*}
\item \underline{Condition (\ref{Condi+1})} : 
\begin{align*}
&\mathcal{P}^1_{i+\frac{1}{2}}\left[\Sigma_p,\Sigma_p\right] + \mathcal{P}^1_{i+\frac{1}{2}}\left[\Sigma_q,\Sigma_p\right] = \max\left(\alpha^p_{i}-\alpha^q_{i+1},0\right) + \min\left(\alpha^p_{i+1},\alpha^q_i\right)\\
& \myeq{(\ref{fact})} \max\left(\alpha^p_{i}-\alpha^q_{i+1},0\right) + \alpha^p_{i+1} - \max\left(\alpha^p_{i+1}-\alpha^q_{i},0\right)\\
& \myeq{(\ref{vol})} \max\left(\alpha^p_{i}- 1 + \alpha^p_{i+1},0\right) + \alpha^p_{i+1} - \max\left(\alpha^p_{i+1}- 1 + \alpha^p_{i},0\right) = \alpha^p_{i+1}
\end{align*}
\item \underline{Condition (\ref{Condmin})} :
\begin{equation*}
\begin{split}
\mathcal{P}^1_{i+\frac{1}{2}}\left[\Sigma_p,\Sigma_p\right] &= \max(\alpha^p_{i}-\alpha^q_{i+1},0) \,\myeq{(\ref{vol})} \, \max(\alpha^p_{i}-1+\alpha^p_{i+1},0)\\
&=\max \Bigg( \min(\alpha_i^p,\alpha^p_{i+1}) \underbrace{ -1 + \max(\alpha_i^p,\alpha^p_{i+1}) }, 0\Bigg) \leq \max \Bigg( \min(\alpha_i^p,\alpha^p_{i+1}), 0\Bigg) \\& \myeq{(\ref{vol})} \min(\alpha_i^p,\alpha^p_{i+1})%\\
%&\qquad\qquad\qquad\qquad\qquad\qquad\qquad
\myleq{(\ref{vol})} 0
\end{split}
\end{equation*}
\item \underline{Condition (\ref{Condmax})} : 
\begin{equation*}
\begin{split}
\mathcal{P}^1_{i+\frac{1}{2}}\left[\Sigma_p,\Sigma_q\right] &= \min(\alpha_i^p,\alpha^q_{i+1}) \myeq{(\ref{vol})} \max \Bigg( \min(\alpha_i^p,\alpha^q_{i+1}), 0\Bigg)\\
& \mygeq{(\ref{vol})} \max \Bigg( \min(\alpha_i^p,\alpha^q_{i+1}) -1 + \max(\alpha_i^p,\alpha^q_{i+1}) , 0\Bigg)\\
&= \max(\alpha_i^p -1 + \alpha^q_{i+1},0) = \max(\alpha^p_{i}-\alpha^p_{i+1},0)
\end{split}
\end{equation*}
\end{itemize}
\end{proof}

\begin{remark}\label{Rem:Rel}
Notice that the combination of the consistency conditions (\ref{ConsistencyConds}) and (\ref{NewProbs}) leads to the following relations
\begin{subequations}
\begin{align}
\max\left(\alpha^p_{i}-\alpha^q_{i+1},0\right) &\leq \mathcal{P}_{i+\frac{1}{2}}\left[\Sigma_p, \Sigma_p\right] \leq \min\left(\alpha_i^p,\alpha_{i+1}^p\right)\label{Rel-pp}\\
\max\left(\alpha_i^p-\alpha^p_{i+1},0\right)&\leq \mathcal{P}_{i+\frac{1}{2}}\left[\Sigma_p, \Sigma_q\right] \leq \min\left(\alpha^p_i,\alpha^q_{i+1}\right)\label{Rel-pq}
\end{align}
\end{subequations}
\end{remark}

\begin{proof}(\textit{of Theorem \ref{Thm:Convex}})\\
We split the proof into several steps.
\begin{enumerate}
\item \underline{Existence of $r\in [0,1]$, verifying (\ref{Convex-pp})-(\ref{Convex-pq})}: From (\ref{Rel-pq}) we get
\begin{equation}\label{almost_r}
0\leq \mathcal{P}_{i+\frac{1}{2}}\left[\Sigma_p,\Sigma_q\right] - \max\left(\alpha_i^p-\alpha^p_{i+1},0\right) \leq \min\left(\alpha^p_i,\alpha^q_{i+1}\right) - \max\left(\alpha_i^p-\alpha^p_{i+1},0\right)
\end{equation}
Notice that if $\min\left(\alpha^p_i,\alpha^q_{i+1}\right) = \max\left(\alpha_i^p-\alpha^p_{i+1},0\right)$, then $\mathcal{P}_{i+\frac{1}{2}}\left[\Sigma_p,\Sigma_p\right] = \max\left(\alpha_i^p-\alpha^p_{i+1},0\right)$, which in turn implies that $\mathfrak{P}_{i+\frac{1}{2}}=\mathfrak{P}^0_{i+\frac{1}{2}}$. Hence, the proposition holds true taking $r = 0$. We therefore assume that $\min\left(\alpha^p_i,\alpha^q_{i+1}\right) \neq \max\left(\alpha_i^p-\alpha^p_{i+1},0\right)$: by (\ref{almost_r}), 
\begin{equation}
\label{r}
r:= \frac{ \mathcal{P}_{i+\frac{1}{2}}\left[\Sigma_p,\Sigma_q\right] - \max\left(\alpha_i^p-\alpha^p_{i+1},0\right)}{\min\left(\alpha^p_i,\alpha^q_{i+1}\right) - \max\left(\alpha_i^p-\alpha^p_{i+1},0\right) }\in [0,1]
\end{equation}
It is straightforward then to see that (\ref{Convex-pq}) holds true. Moreover,
\begin{equation}\label{den}
\begin{split}
\min\left(\alpha^p_i,\alpha^q_{i+1}\right) - \max\left(\alpha_i^p-\alpha^p_{i+1},0\right) \,&\myeq{(\ref{fact})}\, \alpha_{i}^p - \max\left(\alpha^p_{i}-\alpha^q_{i+1},0\right)-\Bigg(\alpha^p - \min\left(\alpha^p_i,\alpha^p_{i+1}\right)\Bigg)\\
&= - \Bigg(\max\left(\alpha^p_{i}-\alpha^q_{i+1},0\right) - \min\left(\alpha^p_i,\alpha^p_{i+1}\right) \Bigg)
\end{split}
\end{equation}
and
\begin{equation}\label{nom}
\begin{split}
\mathcal{P}_{i+\frac{1}{2}}\left[\Sigma_p,\Sigma_q\right] - \max\left(\alpha_i^p-\alpha^p_{i+1},0\right) &\,\myeq{(\ref{fact})} \mathcal{P}_{i+\frac{1}{2}}\left[\Sigma_p,\Sigma_q\right] - \alpha^p_i + \min\left(\alpha^p_i,\alpha^p_{i+1}\right)\\
&\myeq{(\ref{Condi})} - \mathcal{P}_{i+\frac{1}{2}}\left[\Sigma_p,\Sigma_p\right] + \min\left(\alpha^p_i,\alpha^p_{i+1}\right)
\end{split}
\end{equation}
Inserting (\ref{den}) and (\ref{nom}) into (\ref{r}), we get an equivalent definition of $r$, namely
\begin{equation}\label{r_equiv}
r = \frac{ \mathcal{P}_{i+\frac{1}{2}}\left[\Sigma_p,\Sigma_p\right] - \min\left(\alpha^p_i,\alpha^p_{i+1}\right) }{\max\left(\alpha^p_{i}-\alpha^q_{i+1},0\right) - \min\left(\alpha^p_i,\alpha^p_{i+1}\right)}
\end{equation}
which implies (\ref{Convex-pp}).
\item \underline{$r$ does not depend on $p$}: We are going to show that the quotients (\ref{r})-(\ref{r_equiv}) are in fact non-depending of $p$, namely the one induced by the choice $p = k$ coincide with the one induced by $p = l$, for any $k\neq l \in\lbrace 1,2\rbrace$.\\
By the equivalence between (\ref{r}) and (\ref{r_equiv}), it is enough to show that
\begin{equation}\label{independence}
\frac{ \mathcal{P}_{i+\frac{1}{2}}\left[\Sigma_k,\Sigma_l\right] - \max\left(\alpha_i^k-\alpha^k_{i+1},0\right)}{\min\left(\alpha^k_i,\alpha^l_{i+1}\right) - \max\left(\alpha_i^k-\alpha^k_{i+1},0\right) } = 
\frac{ \mathcal{P}_{i+\frac{1}{2}}\left[\Sigma_l,\Sigma_k\right] - \max\left(\alpha_i^l-\alpha^l_{i+1},0\right)}{\min\left(\alpha^l_i,\alpha^k_{i+1}\right) - \max\left(\alpha_i^l-\alpha^l_{i+1},0\right) }
\end{equation}
Subtraction of (\ref{Condi+1}) from (\ref{Condi}), when $p=k$ and $q = l$, yields
\begin{equation}\label{pro_change}
\mathcal{P}_{i+\frac{1}{2}}\left[\Sigma_k,\Sigma_l\right] = \mathcal{P}_{i+\frac{1}{2}}\left[\Sigma_l,\Sigma_k\right] + \alpha_i^k-\alpha^k_{i+1}
\end{equation}
Notice that,
\begin{equation}
\begin{split}	\label{max_change}
\alpha_i^k-\alpha^k_{i+1} - \max\left(\alpha_i^k-\alpha^k_{i+1},0\right) &\myeq{(\ref{vol})}\, \alpha_{i+1}^l-\alpha^l_{i} - \max\left(\alpha_{i+1}^l-\alpha^l_{i},0\right)\myeq{(\ref{fact})}\, \min\left(\alpha_{i+1}^l,\alpha^l_{i}\right) -\alpha^l_{i}\\
&\myeq{(\ref{fact})}\, - \max\left(\alpha_i^l-\alpha^l_{i+1},0\right)
\end{split}
\end{equation}
and
\begin{equation}\label{min_change}
\begin{split}
\min\left(\alpha^k_i,\alpha^l_{i+1}\right) - \alpha^k_i + \alpha^k_{i+1} &\myeq{(\ref{fact})} -\max\left(\alpha^k_i - \alpha^l_{i+1},0\right) + \alpha^k_{i+1} \myeq{(\ref{vol})} -\max\left(\alpha^k_{i+1} - \alpha^l_{i},0\right) + \alpha^k_{i+1}\\
&\myeq{(\ref{fact})} \min(\alpha^l_{i},\alpha^k_{i+1})
\end{split}
\end{equation}
Equations (\ref{pro_change}), (\ref{max_change}) and (\ref{min_change}) into the left hand side of (\ref{independence}), leads to the desired equality.
\end{enumerate}
\end{proof}

\section{The Continuous Limit}
\label{appendix:ContLimit}

In this section, we aim at deriving a set of PDEs for the simulation of multiphase flow phenomena. This can be achieved by deriving the continuous limit that the set of discrete ODEs \ref{semi-discrete_form} is converging to. As discussed for the relaxation term, the convergence of each single term involved in the system of ODEs is solver-dependent, in principle. One possibility to circumvent such difficulty is to fix a specific form of the RS, which allows for computations. We choose the assumption (\ref{eq:RSassumption}). Furthermore, we make also the following simplification: let us assume that the RS under use computes the contact-discontinuity speed $\sigma$ and pressure $p^*$ as follows
\begin{align}\label{eq:Interfacial}
\sigma\left(
\begin{bmatrix}
\rho\\
u\\
p
\end{bmatrix}_L,
\begin{bmatrix}
\rho\\
u\\
p
\end{bmatrix}_R
\right) 
&= \frac{Z_L u_L+Z_R u_R}{Z_L+Z_R} - \frac{P_R-P_L}{Z_L+Z_R}\\
p^*\left(
\begin{bmatrix}
\rho\\
u\\
p
\end{bmatrix}_L,
\begin{bmatrix}
\rho\\
u\\
p
\end{bmatrix}_R
\right)
&=\frac{Z_Rp_L+Z_Lp_R}{Z_L+Z_R}
-\frac{Z_LZ_R(u_R-u_L)}{Z_L+Z_R}
\end{align}
where $Z_k = \rho_kc_k$ $k=L,R$ denotes the acoustic impedances computed by the solver and $c_k$ is an approximation to the sound speed. Specifically, we always assume that the internal energy can be described in terms of the independent variables $\rho_k$ and $p_k$, i.e. $e_k = e_k(\rho_k,p_k)$ denotes the EOS, so that the sound speed
is denoted as
\begin{equation}
\label{SOS}
a_k := a_k\left(\rho_k,p_k\right):=\sqrt{\frac{p_k}{\rho_k^2\partial_{p_k}e_k}-\frac{\partial_{\rho_k} e_k}{\partial_{p_k }e_k}}.
\end{equation}
For the case of the acoustic solver (see. \cite{ToroRS}, page 299-300), we simply get $c_k = a_k$.\\
Conversely, for the case of the HLLC solver (see \cite{ToroRS,ToroHLLC}) one has that $c_k = u_k - S_k$, where $S_k$ denotes the fasted signal speed on the $k$-th side.\\
Notice that each of the aforementioned interfacial solvers can be written into the sum of a symmetric part and anti-symmetric part, namely:

\begin{equation*}
\sigma(L,R) = S(L,R) - AS(L,R),
\qquad
S(L,R):= \frac{Z_L u_L+Z_R u_R}{Z_L+Z_R},
\quad
AS(L,R) := \frac{P_R-P_L}{Z_L+Z_R}
\end{equation*}
so that $S(L,R) = S(R,L)$ and $AS(L,R) = -AS(L,R)$.\\

We split the analysis into several contributions

\subsection{Relaxation Terms}

Based on the assumption (\ref{eq:RSassumption}), we get that
\begin{align*}
\mathcal{E}_{relax}\left[F^{lag}\right]_i
&= 
\mathcal{E}\left[\frac{N_{int}}{\Delta x}\right]
\Bigg(
F^{lag}_{lk} - F^{lag}_{kl}
\Bigg) = 
\begin{bmatrix}
\sigma_{kl} - \sigma_{lk}\\
0\\
p^*_{lk} - p^*_{kl}\\
p^*_{lk}\sigma_{lk} - p^*_{kl}\sigma_{kl}
\end{bmatrix}\\
&=
\mathcal{E}\left[\frac{N_{int}}{\Delta x}\right]
\Bigg(
(\sigma_{lk} - \sigma_{kl})
\begin{bmatrix}
-1\\
0\\
0\\
\frac{1}{2}\left(p^*_{lk} + p^*_{kl}\right)
\end{bmatrix}
+
(p^*_{lk} - p^*_{kl})
\begin{bmatrix}
0\\
0\\
1\\
\frac{1}{2}\left(\sigma_{lk} + \sigma_{kl}\right)
\end{bmatrix}
\Bigg)
\end{align*}
Simple algebraic manipulations by using (\ref{eq:Interfacial}) and the symmetric-antisymmetric splitting show that
\begin{align*}
\sigma_{lk} + \sigma_{kl} &= 2 \frac{Z_ku_k + Z_lu_l}{Z_l+Z_k} =: 2u^{'}_I
\qquad
\qquad
\sigma_{lk} - \sigma_{kl} = -2 \frac{p_k-p_l}{Z_k + Z_l}
\\
p^*_{lk} + p^*_{kl} &= 2 \frac{Z_kp_k + Z_lp_l}{Z_l+Z_k} =: 2p^{'}_I
\qquad
\qquad
p^*_{lk} - p^*_{kl} = -2 Z_kZ_l\frac{u_k-u_l}{Z_k + Z_l}
\end{align*}
Plugging these latter into the relaxation form, one concludes
\begin{align*}
\mathcal{E}_{relax}\left[F^{lag}\right]_i
&=
\mathcal{E}\left[\frac{N_{int}}{\Delta x}\right]
\Bigg(
-2 \frac{p_k-p_l}{Z_k + Z_l}
\begin{bmatrix}
-1\\
0\\
0\\
\frac{Z_kp_k + Z_lp_l}{Z_l+Z_k}
\end{bmatrix}
-
2 Z_kZ_l\frac{u_k-u_l}{Z_k + Z_l}
\begin{bmatrix}
0\\
0\\
1\\
\frac{Z_ku_k + Z_lu_l}{Z_l+Z_k}
\end{bmatrix}
\Bigg)\\
&=
\mathcal{E}\left[\frac{N_{int}}{\Delta x}\right]
\Bigg(
-2 \frac{p_k-p_l}{Z_k + Z_l}
\begin{bmatrix}
-1\\
0\\
0\\
p^{'}_I
\end{bmatrix}
-
2 Z_kZ_l\frac{u_k-u_l}{Z_k + Z_l}
\begin{bmatrix}
0\\
0\\
1\\
u^{'}_I
\end{bmatrix}
\Bigg)
\end{align*}
By defining the parameters 
\begin{equation*}
\mu := \mathcal{E}\left[\frac{N_{int}}{\Delta x}\right] \frac{2}{Z_k+Z_l}
\qquad
\lambda := Z_kZ_l\mu
\end{equation*}
one gets that
\begin{align*}
\mathcal{E}_{relax}\left[F^{lag}\right]_i
&=
-\mu(p_k-p_l)
\begin{bmatrix}
-1\\
0\\
0\\
p^{'}_I
\end{bmatrix}
-
\lambda (u_k-u_l)
\begin{bmatrix}
0\\
0\\
1\\
u^{'}_I
\end{bmatrix}
= 
\begin{bmatrix}
\mu(p_k-p_l)\\
0\\
-\lambda (u_k-u_l)\\
-\mu(p_k-p_l)p^{'}_I -
\lambda (u_k-u_l) u^{'}_I
\end{bmatrix}
\end{align*}
By assuming that the relative number of interfaces $\mathcal{E}\left[\frac{N_{int}}{\Delta x}\right]$ remains bounded as $\Delta x\rightarrow 0$, the continuous limit for the relaxation term is derived.

\subsection{Conservative Terms}
\noindent
The convergence of conservative fluxes is readily provided: 
by the finite difference approximation, one gets
\begin{equation}
\frac{\mathcal{E}_{i+\frac{1}{2}}\left[X^{(k)}F\right] - \mathcal{E}_{i-\frac{1}{2}}\left[X^{(k)}F\right]}{\Delta x}\longrightarrow \frac{\partial}{\partial x} \mathbb{E}\left[ X^{(k)}\textbf{F}^{(k)}\right]
\end{equation}
Under the assumption that kinetic-fluctuations may be disregarded (see \ref{EA_flux}), one gets that
\begin{equation*}
\frac{\partial}{\partial x} \mathbb{E}\left[ X^{(k)}\textbf{F}^{(k)}\right]\approx \frac{\partial}{\partial x} \alpha_k\textbf{F}_{k}
\end{equation*}
so that the conservative terms of the continuous limits are proven.

\subsection{Non-Conservative Terms}
\noindent
Inserting the new set of probabilities the boundary terms can be split into
\begin{equation*}
\begin{split}
\frac{\mathcal{E}_{boundary}\left[F^{lag}\right]_i}{\Delta x} 
&= \frac{1}{\Delta x} 
\Bigg( 
r_{i+\frac{1}{2}}F_{disp,i+\frac{1}{2}}^{lag} + 
r_{i-\frac{1}{2}}F_{disp,i-\frac{1}{2}}^{lag}
+ 
(1-r_{i+\frac{1}{2}})F_{strat,i+\frac{1}{2}}^{lag} + 
(1-r_{i-\frac{1}{2}})F_{strat,i-\frac{1}{2}}^{lag} 
\Bigg)\\
& = 
\frac{1}{\Delta x}
\Bigg(
\underbrace{
F_{strat,i+\frac{1}{2}}^{lag} + F_{strat,i-\frac{1}{2}}^{lag} 
}
+
\underbrace{
r_{i+\frac{1}{2}}
\left(
F_{disp,i+\frac{1}{2}}^{lag} - F_{strat,i+\frac{1}{2}}^{lag}
\right)
+
r_{i-\frac{1}{2}}
\left(
F_{disp,i-\frac{1}{2}}^{lag} - F_{strat,i-\frac{1}{2}}^{lag}
\right)
}
\Bigg)\\
&\qquad \qquad\qquad 
=: F_{strat}^{lag} 
\qquad\qquad\qquad\qquad\qquad\qquad\qquad\qquad 
=: F_{disp}^{lag}
\end{split}
\end{equation*}
where
\begin{align*}
F^{lag}_{strat,i+\frac{1}{2}} & := \left(\beta^{(l,k)}_{i+\frac{1}{2}}\right)^{-}\max(\alpha_{i+1}^k - \alpha_{i}^k,0)F^{lag,(l,k)}_{i+\frac{1}{2}} - \left(\beta^{(k,l)}_{i+\frac{1}{2}}\right)^{-}\max(\alpha_i^k-\alpha_{i+1}^k,0)F^{lag,(k,l)}_{i+\frac{1}{2}}\\
F^{lag}_{strat,i-\frac{1}{2}} & := \left(\beta^{(l,k)}_{i-\frac{1}{2}}\right)^{+}\max(\alpha_{i}^k - \alpha_{i-1}^k,0)F^{lag,(l,k)}_{i-\frac{1}{2}} - \left(\beta^{(k,l)}_{i-\frac{1}{2}}\right)^{+}\max(\alpha_{i-1}^k-\alpha_{i}^k,0)F^{lag,(k,l)}_{i-\frac{1}{2}}\\
F^{lag}_{disp,i+\frac{1}{2}} & := \left(\beta^{(l,k)}_{i+\frac{1}{2}}\right)^{-}\min(\alpha_{i}^l, \alpha_{i+1}^k)F^{lag,(l,k)}_{i+\frac{1}{2}} - \left(\beta^{(k,l)}_{i+\frac{1}{2}}\right)^{-}\min(\alpha_i^k,\alpha_{i+1}^l)F^{lag,(k,l)}_{i+\frac{1}{2}}\\
F^{lag}_{disp,i-\frac{1}{2}} & := \left(\beta^{(l,k)}_{i-\frac{1}{2}}\right)^{+}\min(\alpha_{i-1}^l,\alpha_{i}^k)F^{lag,(l,k)}_{i-\frac{1}{2}} - \left(\beta^{(k,l)}_{i-\frac{1}{2}}\right)^{+}\min(\alpha_{i-1}^k,\alpha_{i}^l)F^{lag,(k,l)}_{i-\frac{1}{2}}\\
\end{align*}
Before detailing each term, we introduce the following convenient notation
\begin{align*}
f^+ := \max(f,0) = \frac{f + \vert f\vert}{2}
\qquad
f^- := \min(f,0) = \frac{f - \vert f\vert}{2}
\qquad
\delta_{i+\frac{1}{2}}^\pm \alpha^{k} := \left(\alpha_{i+1}^k-\alpha_i^k\right)^\pm
\end{align*}
So that also we rewrite the flux-indicators $\left(\beta^{(p,q)}_{i+\frac{1}{2}}\right)^\pm = \sigma_{i+\frac{1}{2}}^\pm(p,q)/\vert\sigma_{i+\frac{1}{2}}\vert$.

\subsubsection{Stratified-Flow terms}
 
This continuous limit was firstly derived in \cite{Saurel2003}, we recall it for the sake of completeness. 
By utilizing the aforementioned notation we get
\begin{align*}
F_{strat}^{lag}
&=
\frac{\sigma_{i-\frac{1}{2}}^+(l,k)}{\vert \sigma_{i-\frac{1}{2}}(l,k)\vert}\delta^+_{i-\frac{1}{2}}\alpha^k F^{lag}_{i-\frac{1}{2}}(l,k)
+
\frac{\sigma_{i-\frac{1}{2}}^+(k,l)}{\vert \sigma_{i-\frac{1}{2}}(k,l)\vert}\delta^-_{i-\frac{1}{2}}\alpha^k F^{lag}_{i-\frac{1}{2}}(k,l)\\
&\qquad
-
\frac{\sigma_{i+\frac{1}{2}}^-(l,k)}{\vert \sigma_{i+\frac{1}{2}}(l,k)\vert}\delta^+_{i+\frac{1}{2}}\alpha^k F^{lag}_{i+\frac{1}{2}}(l,k)
-
\frac{\sigma_{i+\frac{1}{2}}^-(k,l)}{\vert \sigma_{i+\frac{1}{2}}(k,l)\vert}\delta^-_{i+\frac{1}{2}}\alpha^k F^{lag}_{i+\frac{1}{2}}(k,l)
\end{align*}
As $\Delta x \rightarrow 0$, we perform the following approximations which hold under the assumption of smooth flow:
\begin{itemize}
\item $F^{lag}_{i+\frac{1}{2}}(l,k)=F^{lag}_{i-\frac{1}{2}}(l,k) = F^{lag}_{i}(l,k)$ as well as
$F^{lag}_{i+\frac{1}{2}}(k,l)=F^{lag}_{i-\frac{1}{2}}(k,l) = F^{lag}_{i}(k,l)$
\item $\sigma_{i+\frac{1}{2}}(k,l)=\sigma_{i-\frac{1}{2}}(k,l) =: \sigma_i(k,l)$ as well as $\sigma_{i+\frac{1}{2}}(l,k)=\sigma_{i-\frac{1}{2}}(l,k) =: \sigma_i(l,k)$
\end{itemize} 
so that one writes
\begin{align*}
\delta^+_i\alpha^k := \frac{\sigma^+(l,k)}{\vert\sigma(l,k)\vert}\delta^+_{i-\frac{1}{2}}\alpha^k - \frac{\sigma^-(l,k)}{\vert\sigma(l,k)\vert} \delta^+_{i+\frac{1}{2}}\alpha^k
\qquad
\delta^-_i\alpha^k := -\frac{\sigma^+(k,l)}{\vert\sigma(k,l)\vert}\delta^-_{i-\frac{1}{2}}\alpha^k + \frac{\sigma^-(k,l)}{\vert\sigma(k,l)\vert} \delta^-_{i+\frac{1}{2}}\alpha^k
\end{align*}
and
\begin{equation*}
\frac{1}{\Delta x} F^{lag}_{strat} \approx \frac{\delta^+_i\alpha^k F^{lag}_i(l,k) - \delta^-_i\alpha^k F^{lag}_i(k,l)}{\Delta x}
=
\frac{\delta_i^+\alpha^k}{\Delta x}
\begin{bmatrix}
-\sigma_i(l,k)\\
0\\
p^*_i(l,k)\\
p^*_i(l,k)\sigma_i(l,k)
\end{bmatrix}
-
\frac{\delta_i^-\alpha^k}{\Delta x}
\begin{bmatrix}
-\sigma_i(k,l)\\
0\\
p^*_i(k,l)\\
p^*_i(k,l)\sigma_i(k,l)
\end{bmatrix}
\end{equation*}
where we used assumption (\ref{eq:RSassumption}) and the interfacial quantities are computed as in (\ref{eq:Interfacial}).\\
By using the symmetric-antisymmetric splitting of interfacial quantities, one can rearrange each equation in the form
\begin{equation*}
\frac{\delta^+_i\alpha^k}{\Delta x}(S(l,k)-AS(l,k)) - \frac{\delta^-_i\alpha^k}{\Delta x}(S(k,l) - AS(k,l)) = \frac{\delta^+_i\alpha^k - \delta^-_i\alpha^k}{\Delta x}
\left( 
S (k,l) + 
\frac{\delta^+_i\alpha^k + \delta^-_i\alpha^k}{\delta^+_i\alpha^k - \delta^-_i\alpha^k}
AS(k,l) 
\right)
\end{equation*}
So that
\begin{equation}
\frac{1}{\Delta x} F^{lag}_{strat} \rightarrow \begin{bmatrix}
-u_I\\
0\\
p_I\\
p_Iu_I
\end{bmatrix}
\frac{\partial \alpha_k}{\partial x}
\end{equation}
where interfacial quantities are defined as 
\begin{equation*}
p_I := p_I^{'} + \mathrm{sign(\partial_x \alpha_k)} \frac{Z_kZ_l}{Z_k+Z_l}(u_l-u_k), \qquad u_I := u_I^{'} + \mathrm{sign(\partial_x \alpha_k)} \frac{1}{Z_k+Z_l}(p_l-p_k)
\end{equation*}

\subsubsection{Disperse-Flow Terms}

Here we assume that the variation with respect to the parameter $r$ is smooth, so that we conclude that
\begin{itemize}
\item $r_{i+\frac{1}{2}} = r_{i-\frac{1}{2}} =: r_i$.
\end{itemize}
\newpage
Under such assumption, the disperse term can be rearranged as
\begin{align*}
\frac{F^{lag}_{disp}}{\Delta x} &= -
\underbrace{ 
r_i \frac{\left(F^{lag}_{strat,i+\frac{1}{2}} + F^{lag}_{strat,i-\frac{1}{2}}\right)}{\Delta x}
} 
+ 
\frac{r_i}{\Delta x} 
\underbrace{
\left(F^{lag}_{disp,i+\frac{1}{2}} + F^{lag}_{disp,i-\frac{1}{2}}\right)
}\\
&
\qquad\qquad
\longrightarrow r
\begin{bmatrix}
-u_I\\
0\\
p_I\\
p_Iu_I
\end{bmatrix}
\frac{\partial \alpha^k}{\partial x}
\qquad\qquad\qquad\qquad
=: F_{disp}
\end{align*}
due to the discussion of previous subsection. We then focus on the convergence of the second term.\\
Under the hypotheses performed for the previous section, we can write
\begin{equation*}
F_{disp} = \alpha^{disp}_{lk} F_i^{lag}(l,k) + \alpha^{disp}_{lk} F^{lag}_i(k,l)
\end{equation*}
where
\begin{align*}
\alpha^{disp}_{lk} &= -\frac{\sigma^-_i(l,k)}{\vert \sigma^-_i(l,k)\vert}\min(\alpha_i^l,\alpha_{i+1}^k) + \frac{\sigma^+_i(l,k)}{\vert \sigma^+_i(l,k)\vert}\min(\alpha_{i-1}^l,\alpha_{i}^k)\\
\alpha^{disp}_{kl} &= -\frac{\sigma^-_i(k,l)}{\vert \sigma^-_i(k,l)\vert}\min(\alpha_i^k,\alpha_{i+1}^l) + \frac{\sigma^+_i(k,l)}{\vert \sigma^+_i(k,l)\vert}\min(\alpha_{i-1}^k,\alpha_{i}^l)
\end{align*}
Hence, by analogous splitting to the one performed above, one gets that
\begin{equation*}
\frac{F_{disp}}{\Delta x} \rightarrow \frac{\partial}{\partial x}
\Bigg(
\alpha_{disp}
\begin{bmatrix}
-u_I\\
0\\
p_I\\
p_Iu_I
\end{bmatrix}
\Bigg)
\end{equation*}
where $\alpha_{disp}$ denotes the volume fraction with lowest value.

\section{A solver-invariant equilibrium variety}
\label{appendix:variety}

In this section we are concerned with the proof of a result concerning the equilibrium variety of (\ref{eq:relaxODE}). For the sake of simplicity we will avoid the subscript $i$, meaning that all the following considerations hold cell-wise. This means $U_p := \left(\textbf{U}_{p}\right)_i$ for each $p\in\lbrace 1, 2\rbrace$. Furthermore, we will make use of the following notation: $F^*(U_L,UR)$, $\sigma(U_L,U_R)$, $U^*(U_L,U_R)$ denote the flux, the interface/contact discontinuity speed and the solution (i.e. the Godunov state) generated from the resolution of the RP
\begin{align*}
&\partial_t \textbf{U} + \partial_x \textbf{F}(\textbf{U}) = 0\\
&\textbf{U}(x,0) = \begin{cases}
\textbf{U}_L & x<0\\
\textbf{U}_R & x>0
\end{cases}
\end{align*}
by means of a prescribed RS. For the sake of brevity, we will also use $q^*_{pq}$ to denote the resulting quantity $q$ in the star region yielded by the resolution of the RP with initial data $U_L=U_p$ and $U_R=U_q$ as referring to the RP between the phases $p$ and $q$.\\
Without loss of generality, we consider the relaxation term (\ref{eq:relaxODE}) for phase $k\neq l \in\lbrace 1,2\rbrace$, which reads
\begin{equation}\label{eq:conds}
\begin{cases}
\sigma\left(U_k, U_l\right) - \sigma(U_l,U_k) = 0\\
F^*_I\left(U_l,U_k\right) - \sigma\left(U_l,U_k\right)U^*_I\left(U_l,U_k\right) - F^*_I\left(U_k,U_l\right) + \sigma\left(U_k,U_l\right) U^*_I\left(U_k,U_l\right) = 0
\end{cases}
\end{equation}
where the sub-index $I$ denotes the evaluation of the corresponding quantity close to the interface from the side of phase $k$.
By the evaluation of each Lagrangian flux to the interface and assumption (\ref{eq:RSassumption}), one has that
\begin{equation}
F^*_I\left(U_l,U_k\right) - \sigma\left(U_l,U_k\right)U^*_I\left(U_l,U_k\right)
= p^*_{lk}D^*_{lk} = p^*_{lk}\begin{bmatrix}
0\\
1\\
\sigma_{lk}
\end{bmatrix},
\qquad\qquad
\forall l\neq k.
\end{equation}
Hence, plugging this latter into (\ref{eq:conds}) one gets that the equilibrium variety is defined by the set of ODEs
\begin{equation}\label{eq:condProof}
0 = 
\begin{bmatrix}
\sigma_{kl} - \sigma_{lk}\\
0\\
p^*_{lk} - p^*_{kl}\\
p^*_{lk}\sigma_{lk} - p^*_{kl}\sigma_{kl}
\end{bmatrix}
=
(\sigma_{lk} - \sigma_{kl})
\begin{bmatrix}
-1\\
0\\
0\\
\frac{1}{2}\left(p^*_{lk} + p^*_{kl}\right)
\end{bmatrix}
+
(p^*_{lk} - p^*_{kl})
\begin{bmatrix}
0\\
0\\
1\\
\frac{1}{2}\left(\sigma_{lk} + \sigma_{kl}\right)
\end{bmatrix}
\end{equation}
Solving such system of ODEs implies the well-known conditions on relaxed states
\begin{equation*}
\sigma_{kl}=\sigma_{lk}=S^\infty
\qquad
p^*_{lk} = p^*_{kl} = p^*.
\end{equation*}

Notice that, the first equation in (\ref{eq:condProof}) is actually a trivial equation, $0 = 0$. This is indeed stating that \emph{no matter the values of $\textbf{U}^\infty$, conditions (\ref{eq:conds}) for mass are always fulfilled}. From the point of view of our ODE (\ref{eq:relaxODE}) this implies the following
\begin{equation}\label{eq:ConstantVF}
\frac{d}{dt} (\alpha_k\rho_k) = 0 \qquad \Rightarrow \qquad \alpha_k\rho_k = \textit{const}.
\end{equation}
over the relaxation step.\\
Therefore, by assumption on the equilibrium variety, the primitive variables vector of relaxed states can be rewritten as

\begin{equation}\label{Charcetrization}
V^\infty_k = \begin{bmatrix}
\alpha^\infty_k\\
\rho^\infty_k\\
S_\infty\\
p_\infty
\end{bmatrix} 
\qquad \qquad
V^\infty_l = \begin{bmatrix}
\alpha^\infty_l\\
\rho^\infty_l\\
S_\infty\\
p_\infty
\end{bmatrix}
\end{equation}
so that the relaxed volume fractions are given by
$\alpha_k^\infty = \alpha_k^0\rho_k^0 / \rho^\infty_k$, by (\ref{eq:ConstantVF}).
Therefore, a natural Maxwellian $M$ is defined as
\begin{equation*}
\textbf{u} = \begin{bmatrix}
\alpha^\infty_k\\
\rho^\infty_k\\
S^\infty\\
p^\infty\\
\alpha_l^\infty\\
\rho^\infty_l\\
\end{bmatrix}
\longmapsto M(\textbf{u}) = \begin{bmatrix}
\alpha^\infty_k\\
\alpha^\infty_k U^\infty_k\\
\alpha^\infty_l\\
\alpha^\infty_l U^\infty_l
\end{bmatrix}
\end{equation*}

\section{Relaxation Strategies}
\label{appendix:RelaxationStrategies}

One important feature of two-phase flow models is to correctly model the interaction between mixture phases. This has been studied for example in  \cite{Bdzil, Saurel&Abgrall}. Several strategies have been developed so far, and one robust approach is to model interaction between phases by means of relaxation procedure, typically involving stiff source terms. As firstly suggested by Abgrall and Saurel in \cite{Abgrall&Saurel}, if the relaxation term $\mathcal{E}_{relax}\left[F^{lag}\right]_i$ in (\ref{semi-discrete_form}) consists of moderate amount of bubbles, standard resolution of (\ref{semi-discrete_form}) can be applied. However, it is usual to associate such relaxation terms to a large values of source terms, i.e. large numbers of disperse particles are considered. Therefore, relaxation strategies that capture the equilibrium states have to be derived. For the seven-equation model, standard techniques are given in \cite{Saurel&Abgrall, Saurel2001, Lallemand}, in which velocity and pressure relaxation steps are split into subsequent operators.

In this work we propose two relaxation strategies that aims at deriving equilibrium states avoiding further splitting methods.

\subsection{A single continuous limit relaxation}
\label{subsec:relaxSingle}

A well-established procedure is to compute the equilibrium values of the unknown solution by firstly deriving a set of ODEs as limit of (\ref{eq:relaxODE}) as $\Delta x\rightarrow 0$. The resulting system of ODEs is then solved in time, determining the equilibrium states. Following this line we propose a unique relaxation: after application of the hyperbolic operator, by means of an approximation of an acoustic solver \cite{Murrone}, the continuous limit of the relaxation term reads \cite{Saurel2003} for each $k\neq l\in\lbrace 1,2 \rbrace$
\begin{equation}\label{Rel_full}
\begin{split}
\frac{d}{dt}& \alpha_k = \mu(p_k-p_l)\\
\frac{d}{dt}&(\alpha_k\rho_k)=0\\
\frac{d}{dt}&(\alpha_k\rho_k u_k) = \lambda (u_{l}-u_k) \\
\frac{d}{dt}&(\alpha_k\rho_kE_k) = \mu p^{'}_I(p_l-p_k) + \lambda u_i^{'}(u_{l}-u_k)
\end{split}
\end{equation}
where $p^{'}_I$, $u^{'}_I$ are given by
\begin{equation}
\label{meaninterfacials2}
p_I^{'} = \frac{Z_k p_l + Z_l p_k}{Z_k + Z_l},\qquad u_I^{'} = \frac{Z_k u_k + Z_l u_l}{Z_k + Z_l}
\end{equation}
where $Z_k = \rho_k a_k$ denotes the acoustic impedance of phase $k$.

It is not difficult to show that this system of ODEs results to have a single velocity and a single pressure, as $\lambda, \mu \rightarrow \infty$. We denote by $u^*$ and $p^*$ the relaxed velocity and relaxed pressure, respectively.\\
Notice that the conservation over the relaxation procedure of the quantity $\alpha_k\rho_k$ leads to the following reformulation of mass and momentum equations
\begin{equation}\label{MaMo-eq}
\frac{d}{dt}\rho_k = -\frac{\rho_k}{\alpha_k}\frac{d}{dt}\alpha_k,\qquad \alpha_k \rho_k \frac{d}{dt} u_k = \lambda (u_{l}-u_k).
\end{equation}
Summing over phase index $k$ the momentum equation and integrating over the relaxation step, we get
\begin{equation*}
(\alpha\rho)_{0,1} (u^* - u_{0,1}) + (\alpha\rho)_{0,2} (u^* - u_{0,2}) = 0
\end{equation*}
from which we deduce
\begin{equation}\label{u_mix}
u^* = \frac{(\alpha\rho u)_{0,1} + (\alpha\rho u)_{0,2}}{(\alpha\rho)_{0,1} + (\alpha\rho)_{0,2}}
\end{equation}
where the sub-index $0$ stands for the value resulting from the hyperbolic operator. Moreover, the energy equation can be rewritten as
\begin{equation}
\alpha_k\rho_k\left(u_k \frac{d}{dt}u_k + \frac{d}{dt}e_k\right) = - p^{'}_I\frac{d}{dt}\alpha_k + \alpha_k\rho_k u^{'}_I \frac{d}{dt}u_k
\end{equation}
which, by means of first equation in (\ref{MaMo-eq}), yields
\begin{equation}\label{energy_quasi}
\frac{d}{dt} e_k =  (u_I^{'} -u_k) \frac{d}{dt} u_k + \frac{p_I^{'}}{\rho_k^2}\frac{d}{dt}\rho_k = u_I^{'}\frac{d}{dt} u_k - \frac{d}{dt}\left(\frac{1}{2} u_k^2\right) - p_I^{'}\frac{d}{dt}\left(\frac{1}{\rho_k}\right) 
\end{equation}
Integration between the pre-relaxed time $t_0$ and the relaxed time $t^*$ yields
\begin{equation}
e_k^* - e_{k0} = \frac{1}{2}(u^*-u_{0,k})\left[2\overline{u}^{'}_I - (u^*+u_k) \right] - \overline{p}^{'}_I\left(\frac{1}{\rho^*_k} - \frac{1}{\rho_{0,k}}\right)
\end{equation}
where $\overline{u_I^{'}} := \frac{1}{ u^* -u_{k0}}\int_{t_0}^{t^*} u_I^{'}\frac{d}{dt} u_k\, dt$ and $\overline{p_I^{'}} := \frac{1}{\frac{1}{\rho_k^*} - \frac{1}{\rho_{k0}}} \int_{t_0}^{t^*} p_I^{'} \left(\frac{1}{\rho_k}\right)\, dt$. Following the work of \cite{Saurel2007b}, a possible choice that has been shown to be compatible with the entropy inequality and with energy conservation is
\begin{equation*}
\overline{p}^{'}_I(t)\approx p^{'}_I(t^*) = p^*, \qquad\qquad \overline{u}^{'}_I(t)\approx u^{'}_I(t^*) = u^*
\end{equation*}
By means of such an approximation, we are led to compute the root of the following non-linear function
\begin{equation}\label{F12}
F_k = F_k (\rho_k, p) := 2\rho_k\rho_{k0}(e_k - e_{k0}) - \rho_k\rho_{k0}(u^* - u_{k0})^2 - 2p (\rho_k - \rho_{k0})   
\end{equation}
where $e_k = e_k(\rho_k, p)$ and $u^*$ is computed according to (\ref{u_mix}). The multivariate function $F = (F_1, F_2)$ depends on $2 +1=3$ variables, namely $\rho_k$ and $p$, so one equation is missing. We complete the system by enforcing fulfillment of the saturation condition:
\begin{equation}\label{F3}
F_3 := \sum_k \alpha_k - 1 = 0
\end{equation}
where, by virtue of mass conservation, $\alpha_k = \frac{(\alpha_k\rho_k)_0}{\rho_k}$. Hence, the Jacobian matrix of $F = (F_1,F_2,F_3)^T$ reads
\begin{equation}
DF :=
\begin{bmatrix}
A_1 & 0 & B_1\\
0 & A_2 & B_2\\
C_1 & C_2 & 0
\end{bmatrix}
\end{equation}
with definitions
\begin{equation}
\begin{split}
A_k &:= \frac{\partial F_k}{\partial\rho_k} = 2\rho_{k0}(e_k-e_{k0}) + 2\rho_k\rho_{k0} \partial_{\rho_k}e_k - \rho_{k0}(u^* - u_{k0})^2 - 2p\\
B_k &:= \frac{\partial F_k}{p} = 2\rho_k\rho_{k0}\partial_{p}e_k - 2(\rho_{k}-\rho_{k0})\\
C_k &:= \frac{\partial F_3}{\partial\rho_k} = -\frac{(\alpha_k\rho_k)_0}{{\rho_k}^2}
\end{split}
\end{equation}
The computation of the root of the the multivariate function $F$ is accomplished by means of a standard Newton-Raphson method. The iterative scheme is stopped when the relative increment is sufficiently small and a robust initial guess has been shown to be $F_0 = (\rho_{10},\rho_{20}, p_I^{'}(t_0))^T$. Therefore, the approximation of the equilibrium states $(\rho^*_1,\rho^*_2,p^*)$ can be summarized into the following algorithm:
\begin{enumerate}
\item Compute the mixture velocity $u^*$ according to (\ref{u_mix});
\item Compute $p^*, \rho_k^*$ for each $k$ by finding the roots of the non linear function $F$ given in (\ref{F12})-(\ref{F3});
\item Update velocity, density and pressure of each phase by assigning $u^*, \rho_k^*,p^*$.
\item Reconstruct the vector of conserved variables and go to the following time step.
\end{enumerate}

Notice that such procedure is a generalization of the standard splitting strategy proposed in \cite{Lallemand, Saurel2001,Saurel2003}.

\subsection{A projection-relaxation strategy}
\label{subsec:relaxproj}

We propose a second relaxation strategy, that aims at avoiding the computation of the continuous limit of the source term $R$ in (\ref{eq:relaxODE}). This is accomplished making use of the approach developed by Murrone et al in \cite{Murrone}. Such a strategy starts with the introduction of a small parameter to model the speed of the relaxation. More precisely, we introduce the relaxation time $\varepsilon \rightarrow 0$, so that (\ref{eq:7EM}) becomes
 
\begin{equation}\label{eq:7EMeps}
\frac{d}{d t}\left( \alpha^{(k)}_i\textbf{U}_i^{(k)}\right) + \frac{1}{\Delta x} G_i(\textbf{U}_i) = \frac{\lambda_i}{\epsilon} \textbf{R}(\textbf{U}_i)
\end{equation}

As discussed in Appendix \ref{appendix:variety}, we define a set of relaxed states $\textbf{u}$ which, upon mapping to conserved variables $\textbf{U}^\infty = M^\textbf{U}(\textbf{u})$, defines a root of the function $R$, namely $R\left(M^\textbf{U}(\textbf{u})\right) = 0$. Based on the discussion exposed in Appendix \ref{appendix:variety}, there exist a natural parametrization in terms of the primitive variables, namely

\begin{equation}
\textbf{u} = \begin{bmatrix}
\alpha_1\\
\rho_1\\
u\\
p\\
\alpha_2\\
\rho_2
\end{bmatrix} \longmapsto  M^\textbf{V}(\textbf{u}) = \textbf{V}^\infty = \begin{bmatrix}
\alpha_1\\
\rho_1\\
u\\
p\\
\alpha_2\\
\rho_2\\
u\\
p
\end{bmatrix}
\end{equation}
Then, looking for a solution of the form $\textbf{W} = M^\textbf{W}(\textbf{u}) + \epsilon \textbf{Y}$, one assumes that there exists an expansion of the source term $R$ such that

\begin{equation}
R(\textbf{W}) = R(M(\textbf{u})) + \epsilon \textbf{J}_\textbf{W} R(M(\textbf{u})) \textbf{Y} + \mathcal{O}(\epsilon^2)
\end{equation}
where $\textbf{J}$ is the Jacobian of the source term $R$ in terms of the variables $\textbf{W}$ evaluated at $\textbf{W} = M^\textbf{W}(\textbf{u})$. Then (\ref{eq:7EMeps}) becomes
\begin{equation}
\frac{d }{d t}\left( \alpha^{(k)}_i\textbf{U}_i^{(k)}\right) + \frac{1}{\Delta x} G_i(\textbf{U}_i) = \textbf{J}_\textbf{U} R(M(\textbf{u}))\textbf{Y} + \mathcal{O}(\epsilon) 
\end{equation}

If we are able to find the projection matrix $\textbf{P}_\textbf{U}$ onto the $\ker \textbf{J}_\textbf{U} R (M(\textbf{u}))$, then neglecting the second order terms we get

\begin{equation}\label{eq:schemeImplicit}
\textbf{P}_\textbf{U}\Bigg(\frac{d}{d t} \left( \alpha^{(k)}_i\textbf{U}_i^{(k)}\right) + \frac{1}{\Delta x} G_i(\textbf{U}_i) \Bigg) = 0
\end{equation}

Equations (\ref{eq:schemeImplicit}) tell us the following: advancing the solution with the hyperbolic step followed by the multiplication of $\textbf{P}_\textbf{U}$ is yielding relaxed states. Notice that, this strategy has in principle just the cost of a matrix vector multiplication, in contrast to the rich variety of iterative processes that can arise form solving the continuous limit (\ref{eq:relaxODE}) as $\lambda_i \rightarrow \infty$ .\\
One can show that each solver that admits the splitting form (\ref{eq:RSassumption}) are associated to the same projection matrix $\Pi$ proposed in \cite{AbgrallPerrier}, in case of transonic flow regimes. Indeed, the Jacobian matrix in terms of the primitive variables of $M$ reads
\begin{equation}\label{eq:Mjac}
dM^\textbf{V}_\textbf{u} := \textbf{J}_\textbf{u}M^\textbf{V} = \begin{bmatrix}
1 & 0 & 0 & 0 & 0 & 0 \\
0 & 1 & 0 & 0 & 0 & 0 \\
0 & 0 & 1 & 0 & 0 & 0  \\
0 & 0 & 0 & 1 & 0 & 0  \\
0 & 0 & 0 & 0 & 1 & 0  \\
0 & 0 & 0 & 0 & 0 & 1  \\
0 & 0 & 1 & 0 & 0 & 0  \\
0 & 0 & 0 & 1 & 0 & 0  \\
\end{bmatrix}
\end{equation}

If the solver under consideration admits the representation (\ref{eq:RSassumption}), the relaxation term (\ref{eq:conds}) for phase $k$ reduces to
\begin{equation}
\label{eq:Rreduced}
R_k(\textbf{U}) := 
 \begin{bmatrix}
 \sigma_{kl} - \sigma_{lk}\\
 0\\
 p_{lk} - p_{kl}\\
 p_{lk}\sigma_{lk} - p_{kl}\sigma_{kl}
 \end{bmatrix} = 
(\sigma_{lk} - \sigma_{kl})
\begin{bmatrix}
-1\\
0\\
0\\
\frac{1}{2}(p_{lk} + p_{kl})
\end{bmatrix}
+ 
(p_{lk} - p_{kl}) \begin{bmatrix}
0\\
0\\
1\\
\frac{1}{2}(\sigma_{lk} + \sigma_{kl})
\end{bmatrix}
\end{equation}
We then deduce clearly that the range of $\textbf{J}_\textbf{U} R(M(\textbf{u}))$ is spanned by the vectors
\begin{equation}
\textbf{V}_1 = \begin{bmatrix}
1\\
0\\
0\\
-p\\
-1\\
0\\
0\\
p
\end{bmatrix} 	
\qquad\qquad
\textbf{V}_2 = \begin{bmatrix}
0\\
0\\
1\\
u\\
0\\
0\\
-1\\
-u
\end{bmatrix}
\end{equation}

However, notice that the Jacobian of the Maxwellian is written in terms of the primitive variables, therefore we need to transform $\textbf{V}_j$ in terms of the primitive variables. This can be done, by computing the linear transformation $T$ between conservative and primitive variables (see \cite{AbgrallPerrier}), such that straightforward computations lead to

\begin{equation}
T(M(\textbf{u}))\textbf{V}_1 =\begin{bmatrix}
1\\
-\frac{\rho_1}{\alpha_1}\\
0\\
-\frac{\rho_1 a^2_1}{\alpha_1}\\
-1\\
\frac{\rho_2}{\alpha_2}\\
0\\
\frac{\rho_2 a^2_2}{\alpha_2}
\end{bmatrix}
\qquad\qquad
T(M(\textbf{u}))\textbf{V}_2 =\begin{bmatrix}
0\\
0\\
\frac{1}{\alpha_1\rho_1}\\
0\\
0\\
0\\
-\frac{1}{\alpha_2\rho_2}\\
0
\end{bmatrix}
\end{equation}

Assembling the matrix $S = [dM(\textbf{u}),T(M(\textbf{u}))\textbf{V}_1,,T(M(\textbf{u}))\textbf{V}_2]$ and inverting it, yields the projection matrix
\begin{equation}
\label{eq:proj}
\Pi = \begin{bmatrix}

1 & 0 & 0 & \frac{\alpha_1\alpha_2}{d} & 0 & 0 & 0 & -\frac{\alpha_1\alpha_2}{d}\\

0 & 1 & 0 & -\frac{\alpha_2\rho_1}{d} & 0 & 0 & 0 & \frac{\alpha_2\rho_1}{d}\\

0 & 0 & \frac{m_1}{m_1 + m_2} & 0 & 0 & 0 & \frac{m_2}{m_1+m_2} & 0\\

0 & 0 & 0 & \frac{\alpha_1\rho_2 a_2^2}{d} & 0 & 0 & 0 & \frac{\alpha_2\rho_1 a_1^2}{d}\\

0 & 0 & 0 & -\frac{\alpha_1\alpha_2}{d} & 1 & 0 & 0 & \frac{\alpha_2\alpha_1}{d}\\

0 & 0 & 0 & \frac{\alpha_1\rho_2}{d} & 0 & 1 & 0 & -\frac{\alpha_1\rho_2}{d}\\ 

\end{bmatrix}
\end{equation}
where $m_k := \alpha_k\rho_k$, $a_k$ denotes the sound speed of phase $k$ and $d := \alpha_1\rho_2a_2^2 + \alpha_2\rho_1a_1^2$. We point out that the form of $\Pi$ is independent on the EOS for each phase, but has been derived by the assumption on the RS (\ref{eq:RSassumption}). Therefore, the result of \cite{AbgrallPerrier} can be extended to each solver that fulfills (\ref{eq:RSassumption}).\\
Finally, we design our alternative relaxation strategy as follows

\begin{enumerate}
\item From the values coming from the hyperbolic step $\textbf{U}^0$, compute the vector of primitive variables $\textbf{V}^0$ and the projection matrix $\Pi$.
\item Compute the vector of reduced variables
$$\textbf{u}^\infty :=
\begin{bmatrix}
\alpha_1^\infty\\
\rho_1^\infty\\
u^\infty\\
p^\infty\\
\alpha_2^\infty\\
\rho_2^\infty
\end{bmatrix}
=
 \Pi \textbf{V}^0$$
\item Build up the vector of conserved variables $\textbf{U}^\infty = M^\textbf{U} (\textbf{u}^\infty)$ and $\textbf{V}^\infty= M^\textbf{V}
(\textbf{u}^\infty)$.
\end{enumerate}

\bibliographystyle{plain}
\bibliography{./biblio}

\end{document}